





\documentclass[sn-mathphys]{sn-jnl}

\usepackage{epstopdf} 


\jyear{2023}%

\usepackage{pifont}
\usepackage{amssymb}
\usepackage{colortbl}
\usepackage{subfigure}
\usepackage{hyperref}
\usepackage[algo2e]{algorithm2e} 
\usepackage{appendix}
 \usepackage{geometry}
 \usepackage{makecell}
 \geometry{
 a4paper,
 total={170mm,258mm},
 left=14mm,
 right=18mm,
 top=18mm,
 bottom=19mm
 }

\usepackage{tabularx}
\usepackage{adjustbox}
\theoremstyle{thmstyleone}%
\newtheorem{example}{Example}%
\newtheorem{proposition}{Proposition}
\newtheorem{theorem}{Theorem}
\newtheorem{definition}{Definition}

\newtheorem{assumption}{Assumption}

\newcommand{\be}{\begin{equation}}
\newcommand{\ee}{\end{equation}}

\definecolor{LightCyan}{rgb}{0.88,1,1}
\DeclareUnicodeCharacter{2113}{$l$}
\makeatletter
\renewcommand{\@thesubfigure}{\hskip\subfiglabelskip}
\newcolumntype{C}{p{1cm}}
\makeatother
\allowdisplaybreaks
\SetKwComment{Comment}{/* }{ */}

\raggedbottom
\usepackage{natbib}
\usepackage{url}
\setlength{\bibsep}{-0.5pt}
\title[Bilevel hyperparameter optimization for SVC]{{\huge\textmd{Global relaxation-based LP--Newton method for multiple hyperparameter selection in support vector classification with feature selection}}}
\begin{document}

\author[1]{\fnm{Yaru} \sur{Qian}}\email{y.qian@soton.ac.uk}

\author[1]{\fnm{Qingna} \sur{Li}}\email{qnl@bit.edu.cn}
\equalcont{These authors contributed equally to this work.}

\author*[2]{\fnm{Alain} \sur{Zemkoho}}\email{a.b.zemkoho@soton.ac.uk}
\equalcont{These authors contributed equally to this work.}



\affil[1]{\orgdiv{School of Mathematics and Statistics, Beijing Key Laboratory on MCAACI, and Key Laboratory of Mathematical Theory and Computation in Information Security}, \orgname{Beijing Institute of Technology}, \orgaddress{\city{Beijing}, \postcode{100081}, \country{PR China}}}
\affil*[2]{\orgdiv{School of Mathematical Sciences}, \orgname{University of Southampton}, \orgaddress{\city{Southampton}, \postcode{SO17 1BJ}, \country{UK}}}


\abstract{Support vector classification (SVC) is an effective tool for classification tasks in machine learning. Its performance relies on the selection of appropriate hyperparameters. This paper focuses on optimizing the regularization hyperparameter $C$ and determining feature bounds for feature selection within SVC, leading to a potentially large hyperparameter space. It is very well-known in machine learning that this could lead to the so-called {\em curse of dimensionality}. To address this challenge of multiple hyperparameter selection, the problem is formulated as a bilevel optimization problem, which is then transformed into a mathematical program with  equilibrium constraints (MPEC). Our primary contributions are two-fold. First, we establish the satisfaction of the MPEC-MFCQ for our problem reformulation. Furthermore, we introduce a novel global relaxation based linear programming (LP)-Mewton method (GRLPN) for solving this problem and provide corresponding convergence results. Typically, in global relaxation methods for MPECs, the algorithm for the corresponding subproblem is treated as a blackbox. Possibly for the first time in the literature, the subproblem is specifically studied in detail. Numerical experiments demonstrate GRLPN’s superiority in efficiency and accuracy over both grid search and traditional global relaxation methods solved using the well-known nonlinear programming solver, SNOPT.\\[-3ex]}

\keywords{Support vector classification,  Hyperparameter selection, Bilevel optimization, Mathematical program with equilibrium constraints, C-stationarity,  LP-Newton method, Feature selection}


\maketitle
\section{Introduction}
  Support vector classification (SVC) is a widely used type of machine learning algorithms that is particularly effective for classification tasks. Classification tasks arise in a wide range of areas, including in economics, finance, and engineering. In engineering more specifically, applications of SVC have been discovered in chemical engineering \cite{de2014fault,susto2013predictive,ahmad2015application,liu2009obscure,heikamp2014support}, civil engineering \cite{deshpande2016performance,harirchian2020application,liu2016comprehensive,hadjidemetriou2018automated,jozdani2019comparing}, mechanics \cite{oskoei2008support,xiao2019leak,diao2021structural,chen2017remaining,laouti2011support}, and so on. In civil engineering, for example, SVC can be applied for the prediction of traffic flow and congestion in a road network \cite{deshpande2016performance}, seismic hazard assessment \cite{harirchian2020application}, soil quality classification \cite{liu2016comprehensive}, pavement condition evaluation \cite{hadjidemetriou2018automated}, as well as land use classification \cite{jozdani2019comparing}. Interested readers are referred to the survey \cite{wang2005support} for more applications of SVC in engineering and other fields. 
  
 In SVC, the process of selecting the best hyperparameters is a crucial topic, and has been extensively studied by numerous researchers from both theoretical and practical perspectives \cite{chapelle2002choosing,dong2007mpec,duan2003evaluation,keerthi2006efficient,kunapuli2008bilevel1,kunapuli2008bilevel,kunapuli2008classification}. However, focus has mostly been on grid search and gradient descent--based methods  \cite{chapelle2002choosing,duan2003evaluation,keerthi2006efficient}.

Recently, bilevel optimization--based techniques to compute optimal hyperparameters have also been gaining some attraction, given that they offer numerous advantages as is discussed in \cite{kunapuli2008classification}. However, solving bilevel programs numerically can be extremely challenging due to their inherent hierarchical structure. It is known that even when all the functions involved are linear, the computational complexity is already NP-hard \cite{ben1990computational}. Two standard approaches exist for solving bilevel programming problems, with the first one being the gradient--based methods, which are generally divided into two categories: iterative differentiation \cite{okuno2021lp,franceschi2017bridge,franceschi2018bilevel,shaban2019truncated} and implicit differentiation \cite{foo2007efficient,mackay2019self}, depending on how the gradient (w.r.t. hyperparameters) can be computed. 

In contrast, the second approach entails transforming the problem into a single-level problem, using various ways. One way is to use the lower-level problem to define an equivalent single--level problem \cite{ye2010new,ye1995optimality}. One can also replace the lower-level problem by its first order optimality condition to solve the resulting problem as a mathematical program with equilibrium constraints (MPEC) \cite{bennett2006model,kunapuli2008classification,kunapuli2008bilevel,conigliobilevel, wang2023fast}. Therefore, various algorithms for MPECs can be potentially applied to solve bilevel optimization problems. Next, we briefly summarize the algorithms for MPEC, which are closely related to the method used in our paper. Notice that the feasible set of MPECs exhibits a highly unique structure, which leads to the violation of the majority of conventional constraint qualifications, and as a result, specialized algorithms are typically employed to address MPECs effectively \cite{fletcher2006local,stein2012lifting,scholtes1999exact}.

A distinguished category of algorithms for MPECs is known as relaxation (or regularization) methods \cite{hoheisel2013theoretical}. The original global relaxation method for MPECs was developed by Scholtes \cite{scholtes2001convergence}, and subsequent advancements have introduced various relaxation schemes aiming to alleviate the challenges posed by complementarity constraints in MPECs through different approaches. Certain recent schemes have demonstrated superior theoretical properties compared to the original global relaxation method. Nonetheless, empirical evidence indicates that the global relaxation method continues to be one of the most efficient and dependable approaches \cite{hoheisel2013theoretical}. 

Some studies have utilized the global relaxation approach for selecting hyperparameters in SVC models \cite{li2022bilevel,li2022unified}. These papers specifically apply bilevel optimization to determine the optimal value of the regularization hyperparameter $C$ for special classes of $\ell_1$ and $\ell_2$--based SVC problems. Within these studies, the corresponding versions of the mathematical programming with equilibrium constraints--tailored Mangasarian-Fromovitz constraint qualification (MPEC-MFCQ) is established, and subsequently, the resulting relaxation subproblem is solved with the SNOPT solver.

In a similar context, a model in \cite{kunapuli2008classification} was proposed for simultaneously selecting the hyperparameter $C$ and determining feature bounds for feature selection. Their approach involved using the FILTER nonlinear programming solver to address the MPEC transformation of the problem. In this paper, we consider the same model as in \cite{kunapuli2008classification}, which is to choose multiple hyperparameters in an $\ell_1$--based SVC model with feature selection that we denote as (\ref{OP}). We subsequently transform it into an MPEC. Different from the approach in \cite{kunapuli2008classification}, we would like to ask the following questions: (a) Is there any theoretical property that the resulting MPEC enjoys? (b) Can we design an efficient numerical algorithm to solve the MPEC? If so, what is the convergence result?  These questions serve as the primary impetus driving the work presented in this paper. 

Based on these research questions, the main contributions of this work are as follows. Firstly, for the first time, we prove the fulfilment of MPEC--MFCQ for the resulting MPEC. Secondly,  in conjunction with the global relaxation approach, we introduce and analyze a LP Newton--based method for the associated subproblem. Thirdly, in terms of the numerical performance, our algorithm outperforms grid search — a pivotal technique for hyperparameter selection in machine learning — as well as the global relaxation method solved by the SNOPT solver.

For the remainder of the paper, note that Section \ref{sec1} introduces the problem (\ref{OP}), which is the main subject of our study. Subsequently, the problem is transformed into a tractable single--level optimization problem; i.e. an MPEC. We also establish the fulfillment of the MPEC-MFCQ, a condition crucial for ensuring the convergence of multiple classes of MPEC--based algorithm, including the relaxation scheme studied here.  In section \ref{sec3}, we present the linear programming (LP)-Newton-based global relaxation method (GRLPN) to solve our resulting MPEC. Numerical experiments demonstrating the efficiency of the proposed method are presented  in section \ref{sec4}. 
To focus our attention on the main ideas, most of the proofs are moved to the appendices.

{\bf Notations.} For $x \in  \mathbb{R}^{n}$, $\|x \|_{0}$ denotes the number of nonzero elements in $x$, while $\| x \|_{1}$ and $\| x \|_{2}$ correspond to the $l_{1}$-norm and $l_{2}$-norm of $x$, respectively. Also, we will use $x_{+}=((x_{1})_{+},\ \cdots,\ (x_{n})_{+}) \in \mathbb{R}^{n}, $ where $(x_{i})_{+}=\max(x_{i},\ 0).$ In matrices, the symbol ';' serves to indicate a line break. For any positive integer $n$, the notation $[n]$ refers to the set $\{1,..,n\}$. $ \mid \! M \! \mid $ stands for the number of elements in the set $M \subset \mathbb{R}^n$. Let \( i \mod K \) represent the remainder of \( i \) when divided by \( K \). We denote $\mathbf{1}_{k}$  with elements all ones in $\mathbb{R}^{k}$. $I_{k}$ is the identity matrix in $\mathbb{R}^{k \times k}$, while $e^{k}_{\gamma}$ is the $\gamma$-th row vector of an identity matrix in $\mathbb{R}^{k \times k}$. The notation $\mathbf{0}_{k \times q}$ represents a zero matrix in $\mathbb{R}^{k \times q}$ and $\mathbf{0}_{k}$ stands for a zero vector in $\mathbb{R}^{k}$. On the other hand, $\mathbf{0}_{(\tau,\ \kappa)}$ will be used for a submatrix of the zero matrix, where $\tau$ is the index set of the rows and $\kappa$ is the index set of the columns.  Similarly to the case of zero matrix, $I_{(\tau,\ \tau)}$ corresponds to a submatrix of an identity matrix indexed by both rows and columns in the set $\tau$. Finally, $\Theta_{(\tau,\ \cdot)}$ represents a submatrix of the matrix $\Theta$, where $\tau$ is the index set of the rows, and $x_{\tau}$ is a subvector of the vector $x$ corresponding to the index set $\tau$.

\section{Bilevel hyperparamter optimization for SVC}\label{sec1}
In this section, we provide a succinct introduction to the cross-validation technique, which serves as the foundation for introducing the bilevel optimization problem (\ref{OP}) for the purpose of selecting optimal hyperparameters for SVC. Subsequently, we proceed to transform (\ref{OP}) into a single-level problem through the utilization of Karush-Kuhn-Tucker (KKT) reformulation and analyze a hidden property of the resulting problem. Throughout this section, we adopt the notation and follow the formulation as presented in \cite{li2022bilevel}.

In $K$-fold cross-validation, the dataset is divided into two distinct subsets: a cross-validation subset $\Omega=\{(x_{i},y_{i})\}_{i=1}^{l_{1}} \in \mathbb{R}^{n+1}$ with $l_1$ data points, and a hold-out test set $\Theta$ with $l_2$ data points. Here, $x_{i} \in \mathbb{R}^{n}$ denotes a data point and $y_{i}\in \{\pm 1\}$ represents its corresponding label. $\Omega$ is partitioned into $K$ disjoint validation sets, denoted as $\Omega_t$. During the $t$-th iteration ($t = 1, \cdots, K$), the training set $\overline{\Omega}_{t}=\Omega \backslash \Omega_{t}$ is used for training purpose with the size of $m_2$; while the validation set $\Omega_t$ with the size of $m_1$ is employed for evaluating the performance. The main purpose of the cross validation technique is to optimize the average accuracy achieved on the validation sets. In the following context, we denote the corresponding index sets for the validation set $\Omega_t$ and training set $\overline{\Omega}_t$ as $\mathcal{N}_t$ and $\overline{\mathcal{N}}_t$, respectively.

Given these procedures, it is crucial to establish specific assumptions about the distribution and characteristics of the dataset, which guide our analysis and modeling efforts. As discussed in \cite{conigliobilevel}, we make the following foundational assumptions for each $K$-fold lower-level problem:
\begin{itemize}
    \item For each lower-level problem $t\in\left[K\right]$, there exists at least one training data with label 1, which is not a support vector, and there also exists at least one training data with label -1, which is not a support vector.
\item For each lower-level problem $t\in\left[K\right]$, there exists at least two training data with label 1, which are support vectors, and there also exists at least two training data with label -1, which are support vectors.
\end{itemize}

Building on the established methodology and dataset assumptions, we proceed to introduce the multiple hyperparameter selection for SVC with feature selection (denoted as MHS-SVCFS), which is studied in \cite{kunapuli2008classification} to choose hyperparameter $C\in\mathbb{R}$ and $\overline{w}\in\mathbb{R}^n$ (given $C_{lb},\ C_{ub}\in\mathbb{R}_+$ and $\overline{w}_{lb},\ \overline{w}_{ub}\in\mathbb{R}^n_+$):

\begin{equation*}\tag{MHS-SVCFS}\label{OP}
    \begin{array}{cl}
        \min\limits_{\substack{w^t, \overline{w} \in \mathbb{R}^n \\ C \in \mathbb{R}}} & \frac{1}{K}\sum\limits_{t=1}^{K}\frac{1}{m_1}\sum\limits_{i\in \mathcal{N}_t}\left\Vert \left(-y_i\cdot x_i^{\top} w^t\right)_+\right\Vert_0 \\
         \textrm{s.t.}&  C_{lb}\leq C \leq \ C_{ub},\ \overline{w}_{lb}\leq \overline{w}\leq \overline{w}_{ub},\\
         {}& \min\limits_{\substack{-\overline{w} \leq w^t \leq \overline{w} \\ b^t \in \mathbb{R}}}\left\{\frac{1}{2}\Vert w^t\Vert_2^2+C\sum\limits_{i\in \overline{\mathcal{N}}_t}\max\left(1-y_i\left(x_i^{\top} w^t\right),0\right)\right\} \mbox{ for } t=1,\cdots,K.
    \end{array}
\end{equation*}
To enhance notational clarity, we adopt an alternative formulation for the data points within the support vector machine framework by augmenting each original data point \(x_i\) with an additional component of 1. This modification facilitates the incorporation of the bias term into the vector representation.

The focus of (\ref{OP}) lies on selecting appropriate hyperparameters, which can be further divided into two components: the regularization parameter $C\in\mathbb{R}_+$ and the feature selection parameter $\overline{w}\in\mathbb{R}^n_+$.


In the lower-level problem, we solve a soft margin SVC problem, incorporating hyperparameters $C$ and $\overline{w}$ alongside bound constraints. The objective function in each case comprises two crucial elements: a squared norm term that maximizes the margin of the SVC model, and a penalized hinge loss term that accounts for misclassification errors, thereby increasing the model's robustness. Additionally, each lower-level problem is formulated as a quadratic programming problem with box constraints applied to each feature. This setup differs from traditional $\ell_1$-SVC in that it allows for the exclusion of non-contributory features; if some components in $\overline{w}$ are small or even zero, the corresponding features are considered redundant or irrelevant. The function $\Vert(\cdot)_{+}\Vert_{0}$ in the upper-level objective function, which is discontinuous and nonconvex, measures the quantification of misclassification errors across $K$ validation sets, each contribution to this measure corresponding to a misclassified data point under the current model parameters. This evaluation is commonly referred to as the cross-validation error (CV error) in classification contexts, which computes the average number of misclassified data points per model configuration.

 Following the techniques in \cite{kunapuli2008classification}, the lower-level problem in (\ref{OP}) can be represented as an MPEC problem by conducting the following three steps. 

\textbf{\em Step one.} Replace the lower-level problems by the KKT conditions. This involves introducing slack variables $\xi^{t} \in \mathbb{R}^{m_{2}}$, converting them into convex quadratic optimization problems. Specifically, we define the t-th lower-level problem $(t=1,\cdots,K)$ as:

\begin{equation}
    \begin{array}{cl}\label{3}
         \min\limits_{w^t\in\mathbb{R}^n,\xi^t\in\mathbb{R}^{m_2}}&\frac{1}{2}\Vert w^t\Vert_2^2+C\sum\limits_{i=1}^{m_2}\xi_i^t  \\
         \hbox{s.t.}&  y_i\left(x_i^{\top} w^t\right)\geq 1- \xi^t_i,\ \xi^t_i\geq 0,\ i=1,\cdots, m_2,\\
         {}&  -\overline{w}\leq w^t\leq\overline{w} .
    \end{array}
\end{equation}

Let $\alpha^t\in\mathbb{R}^{m_2},\ \mu^t\in\mathbb{R}^{m_2},\ \beta^t\in\mathbb{R}^{n}$ and $\gamma^t\in\mathbb{R}^{n}$ represent the Lagrangian multipliers associated with (\ref{3}). The corresponding  KKT condition of (\ref{3}) can be expressed as:
\begin{align}   
     &0\leq \alpha^t_i\perp y_ix_i^{\top} w^t-1+\xi^t_i \geq0,\ i=1,\cdots, m_2,\label{4a}\\
    &0\leq \xi^t\perp  \mu^t\geq0,\label{4b}\\
    & 0\leq \beta^t \perp w^t+\overline{w} \geq0,\label{4c}\\
     &0\leq \gamma^t\perp -w^t+\overline{w} \geq0,\label{4d}\\
      & w^t=\sum\limits_{k=1}^{m_2}y_k \alpha^t_k x_k+\beta^t-\gamma^t,\label{4e}\\
     & \mu^t=\mathbf{1}C-\alpha^t.\label{4f}
\end{align}

For $t=1, \cdots, K$, we could eliminate $w^t$ and $\mu^t$, which leads to the following:
\begin{align}   
     &0\leq \alpha^t_i\perp y_ix_i^{\top} \left(\sum\limits_{k=1}^{m_2}y_k \alpha^t_k x_k+\beta^t-\gamma^t\right)-1+\xi^t_i \geq0,\ i=1,\cdots, m_2, \label{4_1a}\\
    &0\leq \xi^t\perp  \mathbf{1}C-\alpha^t\geq0,\label{4_2b}\\
    & 0\leq \beta^t \perp \left(\sum\limits_{k=1}^{m_2}y_k \alpha^t_k x_k+\beta^t-\gamma^t\right)+\overline{w} \geq0,\label{4_3c}\\
     &0\leq \gamma^t\perp -\left(\sum\limits_{k=1}^{m_2}y_k \alpha^t_k x_k+\beta^t-\gamma^t\right)+\overline{w} \geq0.\label{4_4d}
\end{align}

\textbf{\em Step two.} Reformulate the upper-level objective function in (\ref{OP}) by an equivalent continuous reformulation. 
Notably, the upper-level objective function is formulated as the form of $\Vert r_+\Vert_0$ with $r\in\mathbb{R}^{m_1}$. Furthermore, 
in \cite{kunapuli2008classification}, it has been shown that for non-zero components of $r$, i.e. $r_i\neq 0,\ i=1,\cdots m_1$, an equivalent transformation can be applied as below:

\begin{equation}\label{UP}
    \Vert r_+\Vert_0\ =\ \left\{\sum_{i=1}^{m_1}\zeta_i\mid \zeta:=\arg\min\limits_u \left\{ -u^{\top}r\mid\mathbf{0}\leq u\leq \mathbf{1}\right\}\right\}.
\end{equation}

Analysis of each corresponding linear programming problem demonstrates:
\begin{equation}\label{linear_transformnation}
    \zeta_i=\begin{cases}
    1,\quad r_i>0,\\
    0,\quad r_i<0,\\
\end{cases}
\end{equation}
For the function $\Vert {\left(r_i\right)}_+\Vert_0$, the classification proceeds straightforwardly:
\begin{equation}
    \Vert {\left(r_i\right)}_+\Vert_0=\begin{cases}
    1,\quad r_i>0,\\
    0,\quad r_i< 0.
\end{cases}
\end{equation}
However, when $r_i = 0$, $\Vert {\left(r_i\right)}_+\Vert_0$ is evaluated as 0, while the solution to the linear programming problem spans the set $[0,1]$. To ensure the robustness and consistency of \eqref{UP}, we assume throughout the paper that  the SVC model either correctly or incorrectly classifies the data points, i.e. for any validation data point $x_i$, $i=1, \ldots, m_1$, it holds that $x_i^{\top} w^t \neq 0$, where $w^t$ is a lower-level variable, for  $t=1,\cdots, K$.
  \begin{figure}[htbp]
\centering
\includegraphics[width=0.4\textwidth]{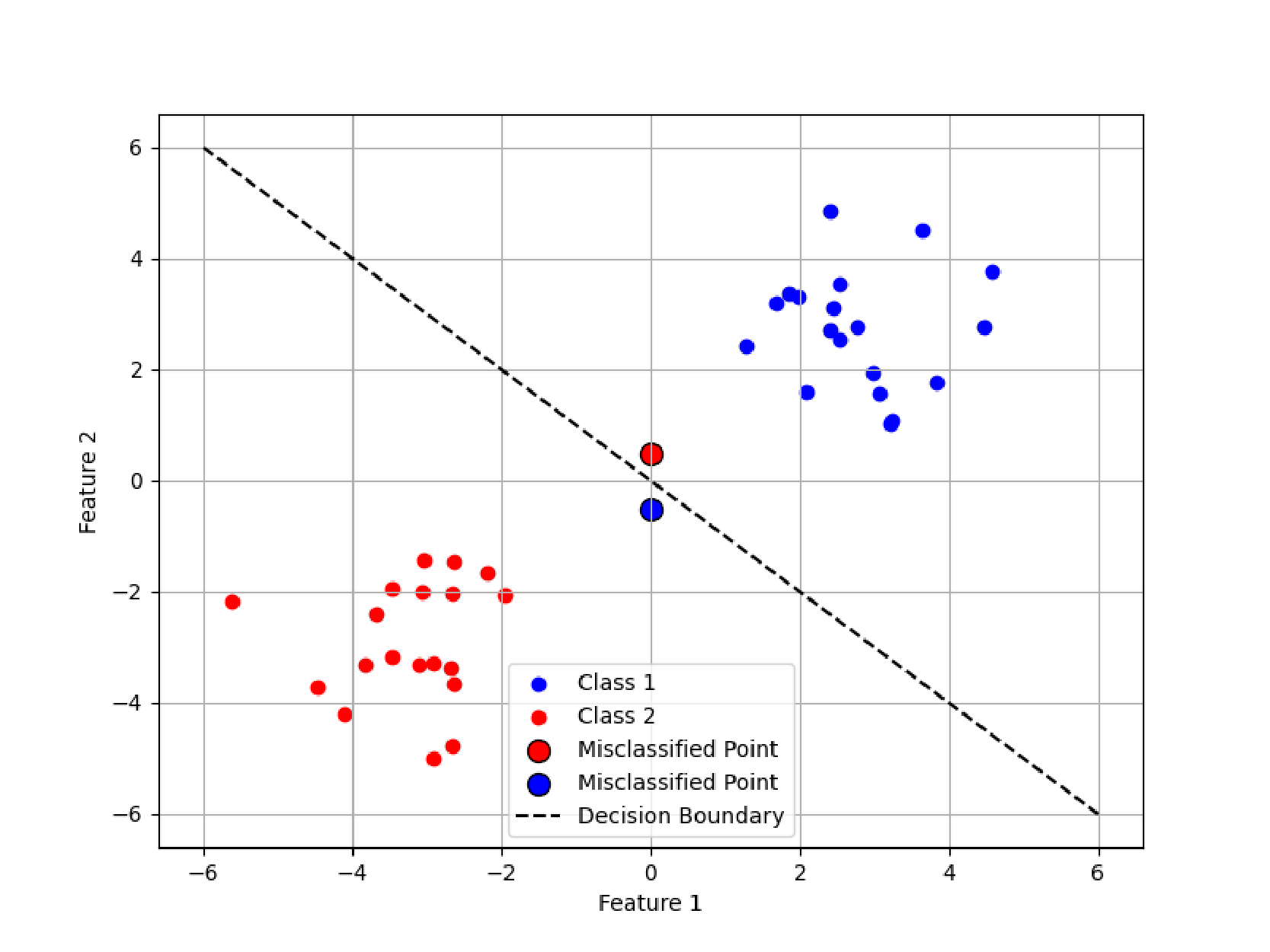}
\caption{Decision Boundary and Classification in a SVC}
\label{fig:svc_model_robustness}
\end{figure}

  In support of the assumption we made, Figure \ref{fig:svc_model_robustness} provides a visual representation of the classification model's behavior with a clear separation between the classes. This figure underscores the premise that all data points are either correctly or incorrectly classified, with none residing on the decision boundary. Under this assumption, the function $\sum\limits_{i\in\overline{\mathcal{N}}_t}\Vert \left(-y_i\cdot x_i^{\top} w^t\right)_+\Vert_0$ can be represented as the sum of all elements of the solution to a linear optimization problem as follows
\begin{equation}
    \begin{array}{cl}\label{LL}         \min\limits_{\zeta^t\in\mathbb{R}^{m_1}}&\sum\limits_{i=1}^{m_1}-\zeta^t_i\left(-y_ix_i^{\top}w^t\right)  \\
         \hbox{s.t.}& \zeta^t\geq\mathbf{0},\ \mathbf{1}-\zeta^t\geq\mathbf{0}.
    \end{array}
\end{equation} 
Subsequently, we replace (\ref{LL}) by its KKT condition:
\begin{align}
    & 0\leq \zeta^t\perp \lambda^t\geq 0,\label{9a}\\
    & 0\leq z^t\perp \mathbf{1}-\zeta^t\geq 0,\\
    & y_ix_i^{\top}w^t-\lambda^t_i+z^t_i=0,\ i=1,\cdots,m_1,\label{9c}
\end{align}
where $\lambda^t,z^t$ are the corresponding Lagrangian multipliers. Similarly to the elimination of $w^t,\ \mu^t$ in the lower-level problem, we proceed by removing $\lambda^t$, leading to the following complementarity condition:

\begin{align}
    & 0\leq \zeta^t_i\perp y_i x_i^{\top}w^t+z^t_i\geq 0 ,\ i=1,\cdots,m_1,\label{9_2a}\\
    & 0\leq z^t\perp \mathbf{1}-\zeta^t\geq 0.\label{9_2b}
\end{align}

\textbf{\textit{Step three}.} Combining the transformations of the upper-level objective function with the reformulated lower-level problems yield the following problem:
\begin{equation}\label{MPEC1}
    \begin{array}{cl}
         \min\limits_{\substack{C\in\mathbb{R},\ \overline{w}\in\mathbb{R}^{n},\\ \zeta^t\in\mathbb{R}^{m_1},\ z^t\in\mathbb{R}^{m_1},\\ \alpha^t\in\mathbb{R}^{m_2},\ \xi^t\in\mathbb{R}^{m_2},\\ \beta^t\in\mathbb{R}^{n},\ \gamma^t\in\mathbb{R}^{n},\\
         t=1,\cdots,K}}&\frac{1}{K m_1}\sum\limits_{t=1}^K\sum\limits_{i=1}^{m_1}\zeta_i^t  \\
         \hbox{s.t.}& C_{lb}\leq C \leq C_{ub},\ \overline{w}_{lb}\leq \overline{w}\leq \overline{w}_{ub},\\[1.5ex]
         {}& \text{and for}\ \  t=1,\cdots K:\\
         {}& 0\leq \zeta^t_i\perp y_i x_i^{\top}\left(\sum\limits_{k=1}^{m_2}y_k \alpha^t_k x_k+\beta^t-\gamma^t\right)+z^t_i\geq 0 ,\ i=1,\cdots,m_1,\\[1.5ex]
    {}& 0\leq z^t\perp \mathbf{1}-\zeta^t\geq 0,\\[2ex]
    {}&0\leq \alpha^t_i\perp y_i x_i^{\top} \left(\sum\limits_{k=1}^{m_2}y_k \alpha^t_k x_k+\beta^t-\gamma^t\right)-1+\xi^t_i \geq0,\ i=1,\cdots, m_2, \\[1.5ex]
    {}&0\leq \xi^t\perp  \mathbf{1}C-\alpha^t\geq0,\\[1.5ex]
    {}& 0\leq \beta^t \perp \left(\sum\limits_{k=1}^{m_2}y_k \alpha^t_k x_k+\beta^t-\gamma^t\right)+\overline{w} \geq0,\\[1.5ex]
     {}&0\leq \gamma^t\perp -\left(\sum\limits_{k=1}^{m_2}y_k \alpha^t_k x_k+\beta^t-\gamma^t\right)+\overline{w} \geq0.
    \end{array}
\end{equation}

Subsequently, the \(K\) upper-level and lower-level problems can be represented by their matrix form using the following notations. Define the vector $v = \left[C,\ \overline{w}^{\top},\ \zeta^{\top},\ z^{\top},\ \alpha^{\top},\ \xi^{\top}, \beta^{\top},\gamma^{\top}\right]^{\top}  \in  \mathbb{R}^{\overline{n}}+\overline{m}$, where 
\begin{equation}
    \overline{n}=1+n,\ \text{and}\quad \overline{m}= 2 K (m_{1}+m_{2}+n)+\overline{n}.
\end{equation}
We reach the subsequent problem
\begin{equation}\label{MPEC}
    \begin{array}{cl}
         \min\limits_{v\in\mathbb{R}^{\overline{n}+\overline{m}}}&\frac{1}{K m_1}\mathbf{1}^{\top}\zeta  \\
         \hbox{s.t.}& C_{lb}\leq C \leq C_{ub},\ \overline{w}_{lb}\leq \overline{w}\leq \overline{w}_{ub},\\
         {}&\mathbf{0}\leq \zeta\perp AB^{\top}\alpha+A\beta-A\gamma+z\geq \mathbf{0},\\
    {}& \mathbf{0}\leq z\perp \mathbf{1}-\zeta\geq \mathbf{0},\\
     {}&\mathbf{0}\leq \alpha\perp BB^{\top}\alpha+B\beta-B\gamma-\mathbf{1}+\xi \geq\mathbf{0},\\
     {}&\mathbf{0}\leq \xi\perp \mathbf{1}C-\alpha \geq\mathbf{0},\\
    {} &\mathbf{0}\leq \beta \perp B^{\top}\alpha+\beta-\gamma+E_K^n\overline{w} \geq\mathbf{0},\\
    {}& \mathbf{0}\leq \gamma\perp -B^{\top}\alpha-\beta+\gamma+E_K^n\overline{w} \geq\mathbf{0}
    \end{array}
\end{equation}
with $\zeta\in\mathbb{R}^{K m_1},\ z\in\mathbb{R}^{K m_1},\ \alpha\in\mathbb{R}^{K m_2},\ \xi\in\mathbb{R}^{K m_2},\ \beta\in\mathbb{R}^{K n},\ \gamma\in\mathbb{R}^{K n},\ $ $B\in\mathbb{R}^{K m_2\times K n},\ A\in\mathbb{R}^{K m_1\times K n}$, $E_K^n\in\mathbb{R}^{K n\times n}$ are respectively defined by
 \begin{equation}
     \zeta:=\left[\begin{array}{l}       \zeta^1\\
          \vdots \\
         \zeta^{K}    \end{array}\right],\ z:=\left[\begin{array}{c}
         z^1\\
          \vdots\\
         z^{K}
    \end{array}\right],\     \alpha:=\left[\begin{array}{l}       \alpha^1\\
          \vdots \\
         \alpha^{K}    \end{array}\right],     \ \xi:=\left[\begin{array}{c}
         \xi^1\\
          \vdots\\
         \xi^{K}
    \end{array}\right],\ \beta:=\left[\begin{array}{c}
         \beta^1\\
         \vdots\\
         \beta^{K}
    \end{array}\right],\    
    \gamma:=\left[\begin{array}{c}
         \gamma^1\\
         \vdots\\
         \gamma^{K}
    \end{array}\right],
 \end{equation}
 \begin{equation}\label{13}
    B:=\left[\begin{array}{ccc}
         B^1&{}&{}\\
         {}& \ddots&{}\\
         {}&{}&B^{K}
    \end{array}\right],\ 
     A:=\left[\begin{array}{ccc}
         A^1&{}&{}\\
         {}& \ddots&{}\\
         {}&{}&A^{K}
    \end{array}\right],\
    E_K^n:=\left[\begin{array}{c}
         I_n\\
         \vdots\\
         I_n
    \end{array}\right],
\end{equation}

\begin{equation}\label{B^t}
    B^{t}\!:=\! \left[\begin{array}{c}
y_{(t-1){m_{2}+1}} x_{(t-1){m_{2}+1}}^{\top} \\
\vdots \\
y_{t{m_{2}}} x_{t{m_{2}}}^{\top}
\end{array}\right] \! \in \! \mathbb{R}^{m_{2} \times n},\ 
 A^{t}:=\left[\begin{array}{c}
y_{(t-1){m_{1}+1}} x_{(t-1){m_{1}+1}}^{\top} \\
\vdots \\
y_{t{m_{1}}} x_{t{m_{1}}}^{\top}
\end{array}\right] \in \mathbb{R}^{m_{1} \times n} ,\ t=1, \cdots, K.
\setlength{\abovedisplayskip}{3pt}
\end{equation}
The inclusion of equilibrium constraints in problem (\ref{MPEC}) categorizes it as an MPEC, which can be written
in the compact form
\begin{equation}\label{compact}
    \begin{array}{cl}
         \min\limits_{v \in \mathbb{R}^{\overline{m}+\overline{n}}}& f(v)  \\
         \hbox{s.t.}&g(v)\geq  \mathbf{0},\\
         {}& \mathbf{0} \leq  H(v)  \perp  G(v)  \geq \mathbf{0},
    \end{array}
\end{equation}
where 
\[
f(v)\! := \!M^{\top}v, \quad G(v)\! := \!Pv+a, \quad  H(v) \!:= \!Qv, \quad \mbox{ and }\;\, g(v) :=R v+b
\]
with 

\begin{align*}
    &M\in\mathbb{R}^{\overline{m} +\overline{n}}, \ a \! \in \! \mathbb{R}^{\overline{m}},\
Q\!:= \!\left[\begin{array}{cc} \mathbf{0}_{\overline{n}\times\overline{m}}& I_{\overline{m}} 
\end{array}\right]\in \! \mathbb{R}^{(\overline{m}) \times (\overline{m}+\overline{n})},\
P\in \! \mathbb{R}^{(\overline{m}) \times (\overline{m}+\overline{n})}, \\
&R\! \in \! \mathbb{R}^{2\overline{n} \times (\overline{m}+\overline{n})}, \ 
b\!:=\!\left[\begin{array}{cccc}
     C_{ub}&
     -C_{lb}&
     \overline{w}_{ub}^{\top}&
     -\overline{w}_{lb}^{\top}\\
\end{array}\right]^{\top}  \! \in \! \mathbb{R}^{2\overline{n}}, 
\end{align*}
and
 $$
  M\! :=\!\frac{1}{T m_{1}} \left[\begin{array}{c} \mathbf{0}_{\overline{n}}\\ \mathbf{1}_{K m_1} \\ \mathbf{0}_{K m_1 } \\ \mathbf{0}_{K m_2 } \\ \mathbf{0}_{K m_2 } \\ \mathbf{0}_{K n }\\ \mathbf{0}_{K n }\end{array}\right] ,\ a \! := \! \left[\begin{array}{l} \mathbf{0}_{K m_1} \\ \mathbf{1}_{K m_1} \\-\mathbf{1}_{K m_2} \\ \mathbf{0}_{K m_2} \\ \mathbf{0}_{K n} \\ \mathbf{0}_{K n} \end{array}\right],\ R\! :=\!\left[\begin{array}{ccc}
     -1& \mathbf{0}_{1 \times  n}& \mathbf{0}_{1 \times  \overline{m}}\\
      1& \mathbf{0}_{1 \times  n}& \mathbf{0}_{1 \times  \overline{m}}\\ 
      0&-I_{n}& \mathbf{0}_{1 \times  \overline{m}}\\ 
      0&I_{n}& \mathbf{0}_{1 \times  \overline{m}}
\end{array}\right],
$$

\begin{align*}
    &\text{w.r.t.}\begin{array}{cccccccc}
\quad \ C &\quad \quad \  \overline{w} &\quad \quad \quad \  \zeta &\quad \quad \quad \quad z & \quad \quad \quad\quad \quad \alpha &\quad \quad \quad \quad \quad \xi &\quad  \quad \quad\quad \beta &\quad  \quad \quad \gamma \quad\quad\quad\\
\hline
\end{array}\\
          &P \! \!:=\left[\begin{array}{cccccccc}
          \mathbf{0}_{K m_1 }& \mathbf{0}_{K m_1 \times n}&\mathbf{0}_{K m_1 \times K m_1} & I_{K m_1} & A B^{\top} & \mathbf{0}_{K m_1 \times K m_2} & A & -A\\ \mathbf{0}_{K m_1} &\mathbf{0}_{K m_1 \times n}&-I_{K m_1 } & \mathbf{0}_{K m_1 \times K m_1} & \mathbf{0}_{K m_1 \times K m_2} & \mathbf{0}_{K m_1 \times K m_2}& \mathbf{0}_{K m_1 \times K n}& \mathbf{0}_{K m_1 \times K n}\\ \mathbf{0}_{K m_2} &\mathbf{0}_{K m_2 \times n}&\mathbf{0}_{K m_2 \times K m_1} & \mathbf{0}_{K m_2 \times K m_1} & B B^{\top} & I_{ K m_2} & B& -B\\
\mathbf{1}_{K m_2 } &\mathbf{0}_{K m_2 \times n}&\mathbf{0}_{K m_2 \times K m_1} & \mathbf{0}_{K m_2 \times K m_1} & -I_{K m_2 } & \mathbf{0}_{K m_2 \times K m_2 }& \mathbf{0}_{K m_2 \times K n}& \mathbf{0}_{K m_2 \times K n}\\
\mathbf{0}_{K n }& E_K^n & \mathbf{0}_{K n \times K m_1} & \mathbf{0}_{K n \times K m_1} & B^{\top} &\mathbf{0}_{K n \times K m_2}& I_{K n} & -I_{K n}\\
\mathbf{0}_{K n }& E_K^n & \mathbf{0}_{K n \times K m_1} & \mathbf{0}_{K n \times K m_1} & -B^{\top} &\mathbf{0}_{K n \times K m_2}& -I_{K n} & I_{K n}\\
\end{array}\right].
\end{align*}    

${}$\\



The commonly used nonlinear programming constraint qualifications, such as the Mangasarian–Fromovitz constraint qualification (MFCQ), are known to be violated in the context of MPEC problems (see \cite{ye1997exact}). However, specialized constraint qualifications have been developed specifically for MPECs, with the most commonly applied being MPEC-MFCQ \cite{jane2005necessary}. In \cite{samuel2025mathematical}, a relaxed version of the MPEC-MFCQ has been proposed, which has proven effective for MPEC reformulations in nonlinear SVC hyperparameter selection. This advancement prompts a critical inquiry: Could the MPEC-MFCQ also be applicable to equation (\ref{OP})? To explore this possibility, we first verify the MPEC-MFCQ condition for (\ref{compact}), beginning with an introduction to positive-linearly independence.
\begin{definition}\label{positive_linearly_independence}
 Consider two sets of vectors \( V_a := \left\{ \tilde{a}_i : i \in I_1 \right\} \) and \( V_b := \left\{ \tilde{b}_i : i \in I_2 \right\} \). We define \( (V_a, V_b) \) as \textit{positive-linearly dependent} if and only if there exist scalars \( \left\{\Tilde{\tau}_i\right\}_{i \in I_1} \) and \( \left\{\Tilde{\sigma}_i\right\}_{i \in I_2} \), not all of them zero, such that
\begin{equation*}
    \sum_{i \in I_1} \Tilde{\tau}_i \tilde{a}_i + \sum_{i \in I_2} \Tilde{\sigma}_i \tilde{b}_i = \mathbf{0},
\end{equation*}
where \( \Tilde{\tau}_i \geq 0 \) for each \( i \in I_1 \) and \( \Tilde{\sigma}_i \) is free for each \( i \in I_2 \). If no such scalars exist, then \( (V_a, V_b) \) is called \textit{positive-linearly independent}.
\end{definition}
Having established the concept of positive-linearly independence, we now proceed to introduce the definition of the MPEC-MFCQ.
\begin{definition}\label{def1}
\rm\cite{samuel2025mathematical} A feasible point $v$ for problem (\ref{compact}) satisfies the MPEC-MFCQ if and only if the set of gradient vectors
\begin{align}
    &V_a = \left\{\nabla g_{i}\left(v\right) \mid i \in  I_{g} \right\}\cup\left\{\nabla G_{i}\left(v\right) \mid i \in  I_{GH} \right\}
\cup \left\{\nabla H_{i}\left(v\right) \mid i \in   I_{GH}\right\}, \label{grad1} \\
    & V_b = \left\{\nabla G_{i}\left(v\right) \mid i \in  I_{G} \right\}
\cup \left\{\nabla H_{i}\left(v\right) \mid i \in   I_{H}\right\} \label{grad2}
\end{align}
are positive-linearly independent, where $I_G$, $I_{GH}$, $I_H$ and $I_g$ are defined by
\begin{align*}
& I_{G}:=   \{i\in[\overline{m}] \ \mid \ G_{i}(v)=0,\ H_{i}(v)>0 \}, 
& I_{GH}:&=   \{i\in[\overline{m}] \ \mid \ G_{i}(v)=0,\ H_{i}(v)=0 \},\\
& I_{H}:=  \{i\in[\overline{m}] \ \mid \ G_{i}(v)>0,\ H_{i}(v)=0 \},
& I_{g}:=&   \{i\in[2\overline{n}] \ \mid \ g_i(v)=0 \}.
\end{align*}
\end{definition}
The primary objective of this section is to demonstrate that our problem satisfies the MPEC-MFCQ, with the proof detailed in Appendix \ref{appendixa}. 
\begin{theorem}\label{thr1}
A feasible point $v = \left[C,\ \overline{w}^{\top},\ \zeta^{\top},\ z^{\top},\ \alpha^{\top},\ \xi^{\top}, \beta^{\top},\gamma^{\top}\right]^{\top}$ of  problem  \eqref{compact} automatically satisfies the MPEC-MFCQ, provided that $-\overline{w} < w^t < \overline{w}$ with $w^t=\left(B^t\right)^{\top}\alpha^t+\beta^t-\gamma^t$ for $t=1,\cdots, K$, and the matrix 
\[
\overline{B}:=\left(BB^{\top}\right)_{(\Lambda_1\cup\Lambda_3,\Lambda_1\cup\Lambda_3)}\; \mbox{ with }\; B  \;\mbox{ given in }\; \eqref{13} \; \mbox{ and }\; \eqref{B^t}
\]
is positive-definite. Note that here, the index sets $\Lambda_1$ and $\Lambda_3$ are respectively defined as follows:
    \begin{align}
        \Lambda_{1}&=\left\{i \in [K m_2]\ \mid \ \alpha_{i}=0,\ \left(B  B^{\top}\alpha+B\beta-B\gamma-\mathbf{1}+\xi\right)_{i}=0,\ \xi_{i}=0\right\}, \\
\Lambda_{3}&=\left\{i \in [K m_2]\ \mid \ 0< \alpha_{i} \leq C,\ \left(B  B^{\top}\alpha+B\beta-B\gamma-\mathbf{1}+\xi\right)_{i}=0,\ \xi_{i}=0\right\}.
    \end{align}
\end{theorem}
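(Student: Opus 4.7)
The plan is to verify MPEC-MFCQ at a feasible point $v$ satisfying the hypotheses by supposing that a nonnegative combination over $V_a$ plus a free combination over $V_b$ of the gradients in \eqref{grad1}--\eqref{grad2} sums to $\mathbf{0}$, and then showing that every coefficient must vanish. Since $G(v)=Pv+a$, $H(v)=Qv$ with $Q=[\mathbf{0}\;\;I]$, and $g(v)=Rv+b$ are all affine, this reduces to one linear system that I would partition column-wise according to the variable blocks $(C,\overline{w},\zeta,z,\alpha,\xi,\beta,\gamma)$ and read off block by block.

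First I would exploit the hypothesis $-\overline{w}<w^t<\overline{w}$. Combined with the last two complementarity pairs of \eqref{compact}, this forces $\beta=0$ and $\gamma=0$ at $v$, with the associated $G$-parts strictly positive; hence every index of the $\beta$- and $\gamma$-complementarities lies in $I_H$. Consequently no $\nabla G_i$ enters the sum from these two pairs, and the corresponding $\nabla H_i$ are standard basis vectors in the $\beta$- and $\gamma$-columns with free coefficients $\overline{\mu}^\beta,\overline{\mu}^\gamma$ that are determined algebraically by the other multipliers. Reading off the $z$-block (paired with $\mathbf{1}-\zeta$) and the $\xi$-block (paired with $\mathbf{1}C-\alpha$), and using the blanket assumption $x_i^\top w^t\neq 0$ to eliminate $I_{GH}$ indices of the $\zeta$-complementarity, similarly expresses the $\nabla H$-multipliers in terms of the $\nabla G$-multipliers and reduces the bound multipliers $\tau$ to the $C$- and $\overline{w}$-rows of $R$.

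The crux is then the $\alpha$-column equation. Collecting contributions to the $\alpha$-column from $\nabla G_i$ of the $\alpha$-, $\zeta$-, $\xi$-, $\beta$-, $\gamma$-complementarities (carrying the submatrices $BB^\top,AB^\top,-I,B,-B$ respectively) together with the $\nabla H_i$-contributions, and inserting the block reductions from the previous step, the surviving equation takes the form
\[
(BB^\top)_{(\Lambda_1\cup\Lambda_3,\,\Lambda_1\cup\Lambda_3)}\,\eta \;=\; \mathbf{0},
\]
where $\eta$ aggregates exactly the $\nu$- and $\overline{\nu}$-multipliers attached to the $\alpha$-complementarity rows indexed by $\Lambda_1\cup\Lambda_3$. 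Positive-definiteness of $\overline{B}$ then forces $\eta=\mathbf{0}$, after which the $\xi$-, $\zeta$-, $z$-, $\beta$-, $\gamma$-, $C$- and $\overline{w}$-blocks successively kill every remaining multiplier and give positive-linear independence.

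The main obstacle is the careful bookkeeping of the index partitions: for each of the six complementarity pairs one must separate its index set into $I_G$, $I_H$ and $I_{GH}$, track which associated multipliers are free vs.\ nonnegative, and then verify that after the $\beta/\gamma$-simplifications and the elimination of the $\xi$-, $z$-, and $\mathbf{1}-\zeta$-multipliers, the effective coefficient matrix acting on the $\alpha$-column multipliers is precisely the principal submatrix $(BB^\top)_{(\Lambda_1\cup\Lambda_3,\,\Lambda_1\cup\Lambda_3)}$, neither larger nor smaller. Once the index tracking is pinned down, the rest is a linear-algebraic cascade driven by positive-definiteness of $\overline{B}$.
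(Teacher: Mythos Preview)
Your overall plan---reduce to a block system, use $-\overline{w}<w^t<\overline{w}$ to kill the $\beta,\gamma$ blocks, and isolate the $\alpha$-column---matches the paper's proof. The gap is in your ``crux'' step. After the reductions you describe, the $\alpha$-column equation restricted to $\Lambda_1\cup\Lambda_3$ is \emph{not} simply $\overline{B}\,\eta=\mathbf{0}$. Two extra nonnegative multipliers survive and cannot be eliminated beforehand: the coefficients on $\nabla H_i$ for $i\in I_{GH_3}=\Lambda_1$ (call them $\rho^9$) and on $\nabla G_i$ for $i\in I_{GH_4}=\Lambda_3^c$ (call them $\rho^{12}$), both coming from $V_a$. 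With the ordering $(\Lambda_3^+,\Lambda_3^c,\Lambda_1)$ the actual equation is
\[
\overline{B}^\top
\begin{pmatrix}\eta_{\Lambda_3^+}\\ \eta_{\Lambda_3^c}\\ \eta_{\Lambda_1}\end{pmatrix}
+
\begin{pmatrix}\mathbf{0}\\ -\rho^{12}\\ \rho^{9}\end{pmatrix}
=\mathbf{0},
\]
where $\eta_{\Lambda_3}$ carries the free $\nabla G$-multipliers from $I_{G_3}$ and $\eta_{\Lambda_1}$ the nonnegative $\nabla G$-multipliers from $I_{GH_3}$. Since $\rho^{9}\ge\mathbf{0}$ and $\rho^{12}\ge\mathbf{0}$ are independent unknowns, positive-definiteness of $\overline{B}$ alone does not give $\eta=\mathbf{0}$.

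The missing idea is a sign-plus-quadratic-form argument. From the $\xi$-column you do get $\eta_{\Lambda_3^c}\le\mathbf{0}$ (because the $\nabla H$-multiplier on $I_{GH_4}$ is nonnegative) and $\eta_{\Lambda_1}\ge\mathbf{0}$; then left-multiplying the displayed equation by $\eta^\top$ yields
\[
0=\eta^\top\overline{B}\,\eta-\rho^{12\top}\eta_{\Lambda_3^c}+\rho^{9\top}\eta_{\Lambda_1}\ge 0,
\]
so each term vanishes and in particular $\eta=\mathbf{0}$. This is exactly how the paper closes the argument, and it is what your proposal is missing.
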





Note that here, the assumption $-\overline{w} < w^t < \overline{w}$, $t = 1, \cdots, K$ serves as a relatively mild constraint within the framework of the hyperparameter optimization problem (\ref{OP}), primarily functioning as a feature selection mechanism. Without this constraint, the model simplifies to a conventional bilevel hyperparameter selection problem, which has been extensively explored in the literature (\cite{li2022bilevel, couellan2015bi}). Although these constraints are not strictly necessary, they are retained in our analysis to facilitate a more structured and insightful discussion.

As for our requirement that the matrix $\overline{B}$ be positive-definite, it is important to note that this matrix is an inherently positive semi-definite matrix by its structural properties. Extending this to assume positive-definiteness is thus a moderate assumption. 
%
Also note that according to findings in \cite{li2022bilevel}, the index sets $\Lambda_1$ and $\Lambda_3$ identify data points that lie on the hyperplanes defined by $w^{\top} x = 1$ or $w^{\top} x = -1$ across each fold. 
Building upon these insights, let us provide a sufficient condition for the matrix $\overline{B}$ to be positive definite. 
\begin{proposition}\label{propsition_rank} 
If the matrix $B_{(\Lambda_1\cup\Lambda_3,\cdot)}$ exhibits full row rank, then the matrix $\overline{B}$ be positive-definite. 
\end{proposition}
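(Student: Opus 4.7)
The proof plan is short and rests on two standard linear algebra facts. First, I would rewrite the submatrix $\overline{B}$ as a Gram matrix. Writing $\widetilde{B}:=B_{(\Lambda_1\cup\Lambda_3,\,\cdot)}$, one verifies immediately from the definition of matrix multiplication that for any index sets $\tau,\kappa$,
\[
(BB^\top)_{(\tau,\kappa)} \;=\; B_{(\tau,\cdot)}\bigl(B_{(\kappa,\cdot)}\bigr)^\top.
\]
Applying this with $\tau=\kappa=\Lambda_1\cup\Lambda_3$ yields $\overline{B}=\widetilde{B}\widetilde{B}^\top$. This identification is the only computational step; it is purely notational and I would expect to present it in one line.

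Next, I would invoke the standard characterization of positive-definiteness of a Gram matrix. For any $x$ of appropriate dimension,
\[
x^\top \overline{B}\,x \;=\; x^\top \widetilde{B}\widetilde{B}^\top x \;=\; \bigl\| \widetilde{B}^\top x\bigr\|_2^2 \;\geq\; 0,
\]
so $\overline{B}$ is always positive semi-definite. Moreover, $x^\top\overline{B}x=0$ forces $\widetilde{B}^\top x=0$. Under the hypothesis that $\widetilde{B}$ has full row rank, the transpose $\widetilde{B}^\top$ has full column rank, so its null space is trivial, whence $x=0$. This establishes $x^\top\overline{B}x>0$ for every $x\neq 0$, i.e.\ positive-definiteness.

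There is essentially no obstacle here: the full row rank assumption on $B_{(\Lambda_1\cup\Lambda_3,\,\cdot)}$ is precisely tailored to the classical equivalence \emph{a Gram matrix $\widetilde{B}\widetilde{B}^\top$ is positive-definite if and only if $\widetilde{B}$ has full row rank}. The only minor care needed is to check that the ``submatrix of a product equals product of submatrices'' identity is applied correctly when the index sets coincide for both rows and columns of $BB^\top$. With that identification made, the proposition follows in two lines.
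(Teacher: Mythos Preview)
Your proposal is correct and matches the paper's own proof essentially line for line: the paper also rewrites $\overline{B}$ as the Gram matrix $B_{(\Lambda_1\cup\Lambda_3,\cdot)}\bigl(B_{(\Lambda_1\cup\Lambda_3,\cdot)}\bigr)^\top$ and then argues that $d^\top\overline{B}d=\|d^\top B_{(\Lambda_1\cup\Lambda_3,\cdot)}\|^2>0$ for $d\neq 0$ by full row rank.
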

\begin{proof}
    Let $d\in \mathbb{R}^{\mid\Lambda_1 \cup \Lambda_3\mid} \setminus \{0\}$.
Considering the quadratic form associated with the submatrix $\overline{B}$, 
    \begin{align}
        d^{\top} \overline{B} d &= d^{\top}B_{(\Lambda_1\cup\Lambda_3,\cdot)}\left(B^{\top}\right)_{(\cdot,\Lambda_1\cup\Lambda_3)} d=d^{\top}B_{(\Lambda_1\cup\Lambda_3,\cdot)}\left(B_{(\Lambda_1\cup\Lambda_3,\cdot)}\right)^{\top} d\\ &= \left(d^{\top}B_{(\Lambda_1\cup\Lambda_3,\cdot)}\right)\left(d^{\top}B_{(\Lambda_1\cup\Lambda_3,\cdot)}\right)^{\top}=  \left\Vert d^{\top}B_{(\Lambda_1\cup\Lambda_3,\cdot)}\right\Vert^2>0.
    \end{align}
    The inequality holds since $d^{\top}B_{(\Lambda_1\cup\Lambda_3,\cdot)}\neq \mathbf{0}$, a consequence of the row linear independence of $B_{(\Lambda_1\cup\Lambda_3,\cdot)}$. 
    Therefore, the matrix $\overline{B}$ 
    is established to be positive-definite.
\end{proof}

Subsequently, we apply Proposition \ref{propsition_rank} to a practical two-dimensional scenario within the context of bilevel optimization as defined in (\ref{OP}).
\begin{example}
    Consider an example involving a bilevel SVC model structured using 2-fold cross-validation. The dataset is organized as follows:

\begin{equation*}
   \begin{array}{cc}
     \text{Fold 1}: & \text{Negative class:} \ \{[0,1,1], [-1,1,1]\}, \ \text{Positive class:} \ \{[1,0,1], [2,-1,1]\} \\
     \text{Fold 2}: & \text{Negative class:} \ \{[-1,0,1], [1,3,1]\}, \ \text{Positive class:} \ \{[0,-1,1], [1,-2,1]\}
   \end{array} 
\end{equation*}

The last component in each vector facilitates the inclusion of a bias term in the model. Parameter bounds are set with \(C_{lb}=0.1,\ C_{ub}=1\) and \(\overline{w}_{lb}=[0.1,0.1,0.1],\ \overline{w}_{ub}=[2,2,2]\). Each data point is indexed within the set \(Q_l = \{1, \dots, 8\}\). For this dataset, the classifiers for each fold are both determined by the parameter \(w=[1,-1,0]\). These parameters result in specific data points lying precisely on the decision boundaries specified by \(w^{\top} x = 1\) and \(w^{\top} x = -1\). The following figures illustrate the distribution of data points along with the decision boundaries for each fold.

\begin{figure}[ht]
\centering
\begin{minipage}{0.49\textwidth}
    \centering
    \includegraphics[width=\textwidth]{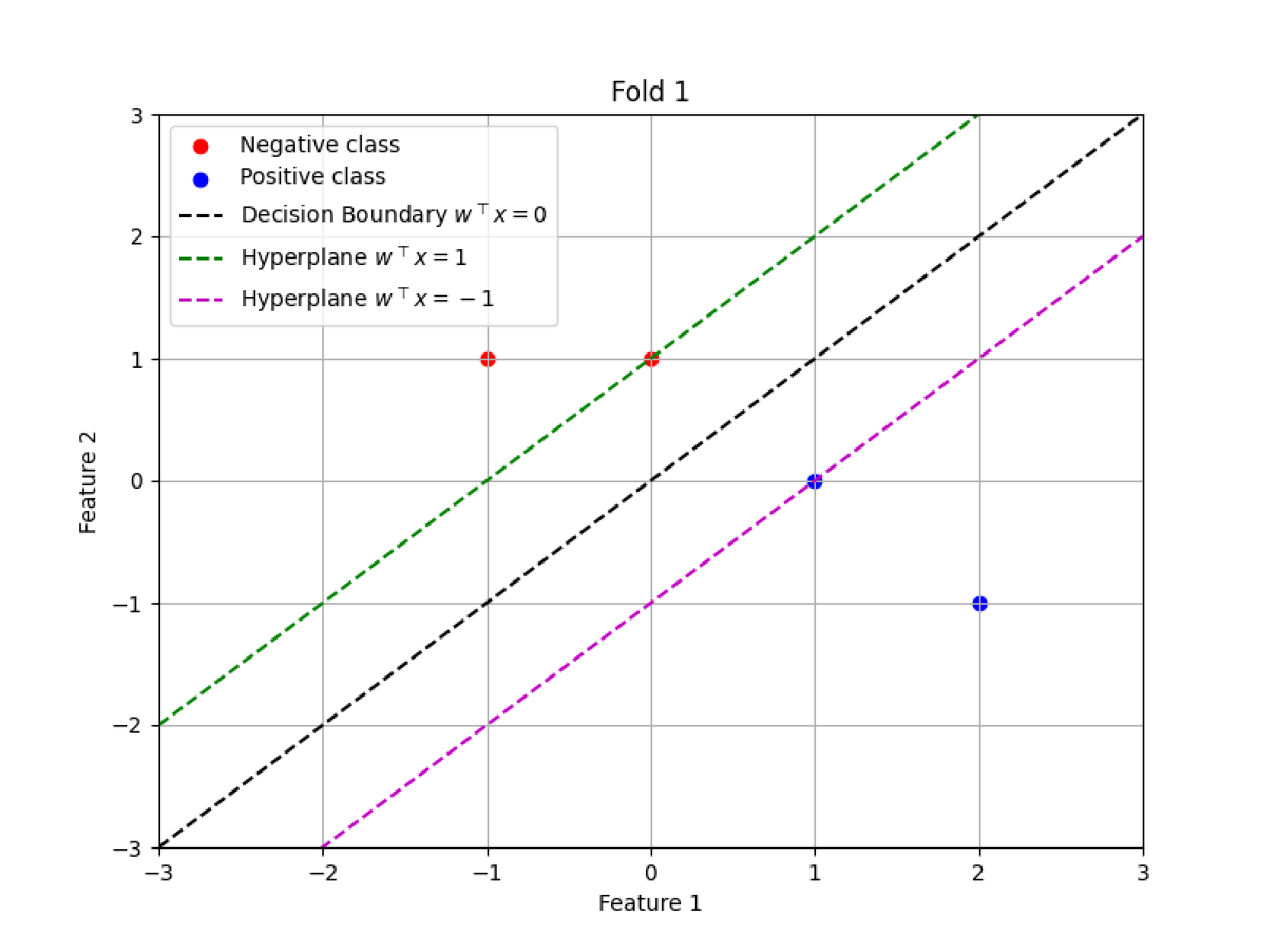} 
\end{minipage}\hfill
\begin{minipage}{0.49\textwidth}
    \centering
    \includegraphics[width=\textwidth]{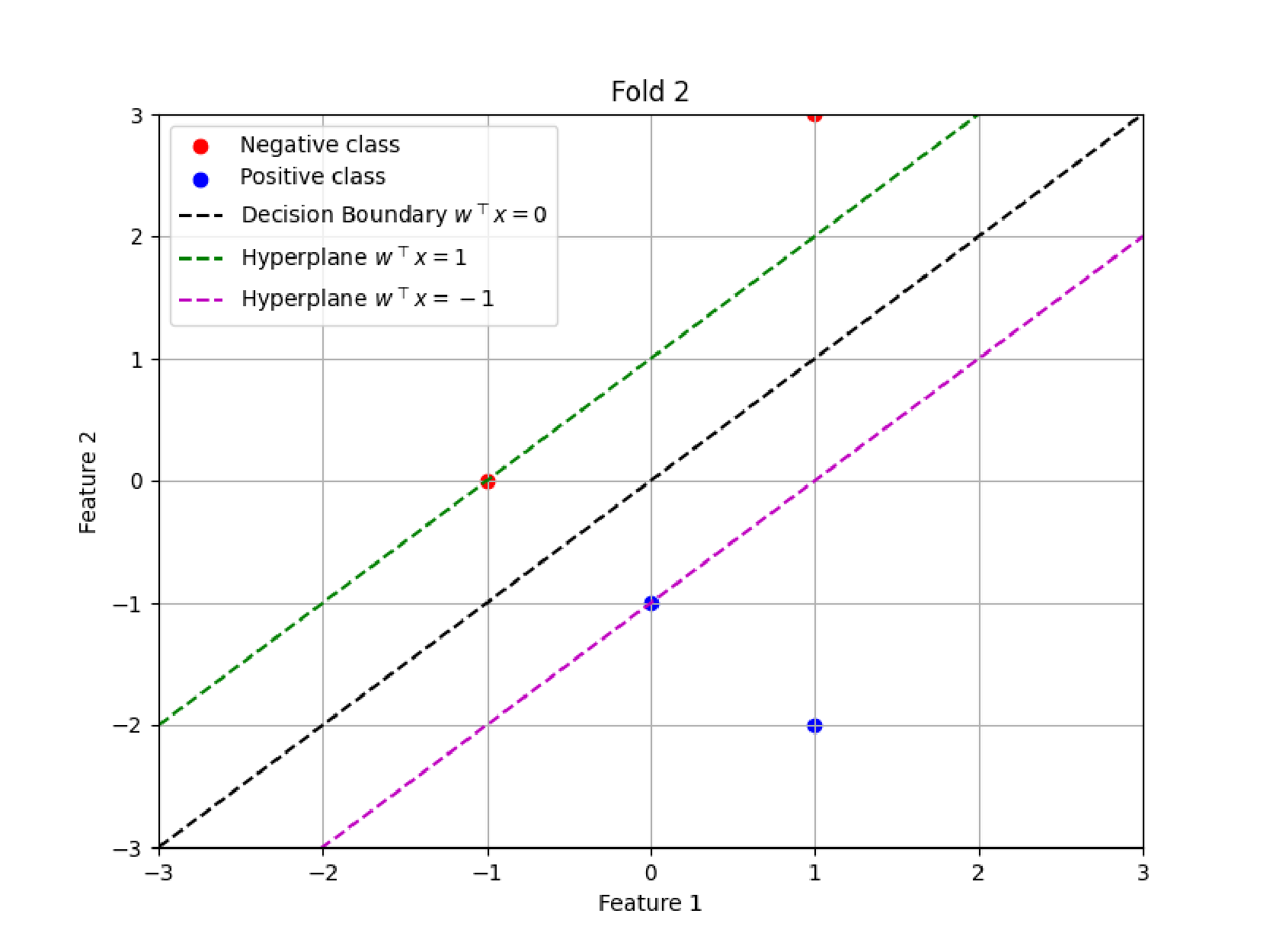} 
\end{minipage}
\end{figure}

As discussed above, we identify indices \(\Lambda_1 \cup \Lambda_3\) containing data points located on the defined hyperplanes. These indices are detailed as follows:

\begin{align*}
    \Lambda_1 \cup \Lambda_3 &= \left\{i \in Q_l \mid w^\top x_i = \pm 1\right\} 
    =  \left\{1, 3, 5, 7\right\},
\end{align*}
given that based on the description of the example, \(w=[1,-1,0]\). Here, \(x_i^j\) represents the \(j\)-th component of vector \(x_i\).
We can easily check that matrix 

\begin{equation*}
B_{(\Lambda_1 \cup \Lambda_3, \cdot)} = \begin{bmatrix}
     0 & -1 & -1 & 0 & 0 & 0 \\
     1 & 0 & 1 & 0 & 0 & 0 \\
     0 & 0 & 0 & 1 & 0 & -1 \\
     0 & 0 & 0 & 0 & -1 & 1
\end{bmatrix}
\end{equation*}
has full row rank. Hence, $\overline{B}$ positive-definite by Proposition \ref{propsition_rank}.
\end{example}

To close this section, note that Theorem \ref{thr1} stands as a fundamental property that is essential for the convergence of our proposed algorithm, which is presented in the next section.

\section{Global relaxation-based LP--Newton method}\label{sec3}

In this section, we present an efficient numerical algorithm to address the MPEC reformulation as described in (\ref{compact}). Among the various methodologies available for solving MPECs, the Global Relaxation Method (GRM), introduced in \cite{scholtes2001convergence}, is particularly noteworthy for its widespread adoption and robust performance. A comparative analysis of five relaxation techniques in \cite{hoheisel2013theoretical} highlighted the GRM’s superior convergence properties and overall numerical effectiveness. Concurrently, the LP--Newton method \cite{facchinei2014lp,fischer2016globally,fischer2016convergence} has been extensively applied in solving systems of equations and has shown significant merits in addressing nonsmooth equations. This raises an intriguing question: can the LP--Newton method be effectively combined with the GRM to solve (\ref{compact})? To explore this possibility, we propose the GRLPN method, which we detail in the subsequent sections.

The main difficulty associated with solving MPECs due to the presence of complementarity constraints. Consequently, the basic idea behind GRM involves the relaxation of these complementarity constraints. At each iteration, the GRM aims to solve the following subproblem (parameterized in $\tau\downarrow 0$):

\begin{equation*}\label{NLP}\tag{NLP-$\tau$}
    \begin{array}{cl}
         \min\limits_{v\in\mathbb{R}^{\overline{n}+\overline{m}}}&  f(v)  \\
         \hbox{s.t.}& G_{i}(v) \geq 0, \quad   \ i=1, \cdots, \overline{m},\\
         {}&H_{i}(v) \geq 0, \quad   \ i=1, \cdots, \overline{m},\\
         {}&G_{i}(v)H_{i}(v) \leq \tau, \quad   \ i=1, \cdots, \overline{m},\\
         {}& g_{i}(v) \geq 0, \quad   \ i=1, \cdots, 2\overline{n}.
    \end{array}
\end{equation*}
The literature (see, e.g., \cite{li2022unified,li2022bilevel,kunapuli2008classification}) encompasses the GRM--based methodology for addressing the hyperparameter selection problem. Nonetheless, we observe a scarcity of publications that specifically address the improvement of solving the subproblem (\ref{NLP}), as most of them directly rely on the NLP solvers. Notice that in (\ref{NLP}), the objective function is a linear function, and in terms of constraints, except the nonlinear constraints $G_{i}(v)H_{i}(v) \leq \tau, \ i=1,\cdots, \overline{m}$, the rest are all linear constraints. Therefore, it is possible to design efficient algorithms to explore the above special structure in (\ref{NLP}). This is the motivation of our GRLPN algorithm.


As pointed out in \cite{hoheisel2013theoretical}, the MPEC-MFCQ condition ensures that the MFCQ holds for the relaxed problem when $\tau$ is sufficiently small. This justification supports the application of the KKT conditions to problem (\ref{NLP}). The Lagrangian function for this problem is critical to formulating the KKT conditions and is given by: \begin{equation*}\label{Lagfun} L(v,\lambda):=f(v) - \sum_{i=1}^{2\overline{n}} \lambda_i^g g_i(v) - \sum_{i=1}^{\overline{m}} \lambda_i^G G_i(v) - \sum_{i=1}^{\overline{m}} \lambda_i^H H_i(v) + \sum_{i=1}^{\overline{m}} \lambda_i^{GH} \left[G_i(v)H_i(v)-\tau\right], \end{equation*} 
where $\lambda:=\left(\left(\lambda^g\right)^{\top},\left(\lambda^G\right)^{\top},\left(\lambda^H\right)^{\top},\left(\lambda^{GH}\right)^{\top}\right)^{\top}\in\mathbb{R}^{2\overline{n}}\times\mathbb{R}^{\overline{m}}\times\mathbb{R}^{\overline{m}}\times\mathbb{R}^{\overline{m}}$.

The KKT conditions associated with this formulation are detailed below: 
\begin{align}
     & \nabla_v L(v,\lambda)=\mathbf{0},\label{26a}\\ 
    & g(v)\geq \mathbf{0},\ \lambda^g\geq \mathbf{0},\ \sum_{i=1}^{2\overline{n}}\lambda_i^g g_i(v)=0,\\
    & G(v)\geq \mathbf{0},\ \lambda^G\geq \mathbf{0},\ \sum_{i=1}^{\overline{m}}\lambda_i^G G_i(v)=0,\\
     & H(v)\geq \mathbf{0},\ \lambda^H\geq \mathbf{0},\ \sum_{i=1}^{\overline{m}}\lambda_i^H H_i(v)=0,\\
      & \tau-G_i(v)H_i(v)\geq 0,\ i=1,\cdots,\overline{m},\\
      &\lambda^{GH}\geq \mathbf{0},\ \sum_{i=1}^{\overline{m}}\lambda_i^{GH} \left[ \tau-G_i(v)H_i(v)\right]=0.\label{26e}
\end{align}

To effectively address the KKT conditions, we reformulate these into a system of equations. To this end, inspired by the techniques in \cite{galantai2012properties}, we define the minimization function for (\ref{26a})--(\ref{26e}) as follows: 
\begin{equation}\label{eqsys1}
     K_{\tau}(v,\lambda): =\left[\begin{array}{c} \nabla_v L(v,\lambda)\\ \min\{\lambda^g,g(v)\}\\
     \min\{\lambda^G,G(v)\}\\
     \min\{\lambda^H,H(v)\}\\
     \min\{\lambda^{GH},\mathbf{\tau}- G(v)H(v)\}
    \end{array}\right]=\mathbf{0},\\
\end{equation} 
with $\mathbf{\tau}-G(v)H(v):=\lbrack \tau-G_1(v)H_1(v),\ \cdots\ , \tau-G_{\overline{m}}(v)H_{\overline{m}}(v)\rbrack^{\top}$ for simplicity.

Given the nonlinearity of the term $\mathbf{\tau}- G(v)H(v)$ in the minimization function, the direct application of the LP--Newton--type framework is impractical. Therefore, we introduce a slack variable $ u:=\mathbf{\tau}- G(v)H(v)$ with $u\in\mathbb{R}^{\overline{m}}$ for substitution, which results in the following system:

\begin{equation}\label{eqsys}
     F_{\tau}(\Tilde{z}): =\left[\begin{array}{c} \nabla_v L(v,\lambda)\\ 
    u+G(v)H(v)-\mathbf{\tau}\\
    \min\{\lambda^g,g(v)\}\\
     \min\{\lambda^G,G(v)\}\\
     \min\{\lambda^H,H(v)\}\\
    \min\{\lambda^{GH},u\}\\
    \end{array}\right]=\mathbf{0}\in \mathbb{R}^{\overline{q}},\ \Tilde{z}\in\Tilde{\Omega}, 
\end{equation}
where $\Tilde{z}:=(v,\lambda,u)\in \mathbb{R}^{\overline{q}} $, with $\overline{q}=3\overline{n}+5\overline{m}$ and  
\[
\Tilde{\Omega}:=\left\{\Tilde{z}\mid \lambda\geq \mathbf{0}, \;\; u\geq \mathbf{0},\;\; g(v)\geq \mathbf{0},\;\; G(v)\geq \mathbf{0},\;\; H(v)\geq \mathbf{0}\right\}.
\]

Note that with this substitution, both sides of the minimization function become linear. Furthermore,  
 the function $F_{\tau}(\Tilde{z})$ is piecewise continuously differentiable and comprises at most $2^{2\overline{n}+3\overline{m}}$ distinct pieces. Each piece, denoted by $F^i_{\tau}(\Tilde{z})$ for $i = 1, \cdots, 2^{2\overline{n}+3\overline{m}}$, corresponds to a continuously differentiable quadratic function. Therefore, $F_{\tau}(\Tilde{z})$ can be expressed as a selection among these quadratic functions; specifically, it holds that
\[
F_{\tau}(\Tilde{z}) \in \{\,F^i_{\tau}(\Tilde{z}) \mid i = 1,\cdots, 2^{2\overline{n}+3\overline{m}}\,\},
\]
where each selection function identifies the active (or minimal) quadratic piece associated with the minimization problem at a given point $\Tilde{z}$.


We now present the algorithm for solving \eqref{eqsys}, which is the LP-Newton method introduced in \cite{fischer2016globally}. This algorithm starts with a current iterate $\Tilde{z}^j \in\Tilde{\Omega}$, and the subsequent iterate is determined as $$\Tilde{z}^{j+1} := \Tilde{z}^j + \theta^m d^j,$$ where the pair $(\eta^j,d^j)\in \mathbb{R}\times\mathbb{R}^{\overline{q}}$ is a solution to the linear programming problem defined by
\begin{equation}\label{LPframe}
    \begin{array}{cl}
         \min\limits_{\eta\in\mathbb{R},d\in\mathbb{R}^{\overline{q}}}&  \eta\\
         \hbox{s.t.}& \Vert F_{\tau}\left(\Tilde{z}^j\right)+\mathcal{J} \left(\Tilde{z}^j\right)d\Vert_{\infty}\leq \eta\Vert F_{\tau}\left(\Tilde{z}^j\right)\Vert_{\infty}^2,\\
         {}&\Vert d\Vert_{\infty}\leq\eta\Vert F_{\tau}\left(\Tilde{z}^j\right)\Vert_{\infty},\\
          {}&\Tilde{z}^j+d\in\Tilde{\Omega},
    \end{array}
\end{equation}
with $\mathcal{J} \left(\Tilde{z}^j\right)\in\partial F_{\tau}\left(\Tilde{z}^j\right)$, and  $\partial F_{\tau}(\Tilde{z}^j)$ is the Clarke's generalized Jacobian of $F_{\tau}(\Tilde{z}^j)$ throughout the paper \cite{clarke1983nonsmooth}.  
At each iteration, the active selection function $F_{\tau}(\Tilde{z}^j)$ is determined by comparing the values of the minimization function on both sides. Consequently, $\mathcal{J}(\Tilde{z}^j)$ could be selected as the Jacobian of the selection function at the point $\Tilde{z}^j$. Furthermore, $m\geq 0$ is the minimum integer satisfying
    \begin{equation}\label{check}
	    \Vert F_{\tau}\left(\Tilde{z}^j+\theta^m d^j\right)\Vert_{\infty}\leq  \Vert F_{\tau}\left(\Tilde{z}^j\right)\Vert_{\infty}+\sigma_1\theta^m\Delta_j,
     \end{equation}
with $\sigma_1\in(0,1)$ being a constant and 
\begin{equation}\label{Delta}
    \Delta_j :=-\Vert F_{\tau}\left(\Tilde{z}^j\right)\Vert \left(1-\eta\Vert F_{\tau}\left(\Tilde{z}^j\right)\Vert\right)
\end{equation}
serving as an appropriate approximation of the directional derivative of $\Vert F_{\tau}(\Tilde{z}^j)\Vert$ along direction $d^j$ in \cite{fischer2016globally}. 

    
    

\begin{algorithm}
\caption{GRLPN}\label{alo1}
\KwData{Initial relaxation parameter ${\tau}_0$, parameters $\sigma_1 \in  \left(0, 1\right),\ \sigma_2 \in  \left(0, 1\right),\ \theta \in  \left(0, 1\right),\ {\tau}_{\min} > 0$, initial point $\left(v^0,\ \lambda^0,\ u^0\right)$.}
$k:=0$.\;

\While{outer stopping criteria is not satisfied}{
  $j:=0,\ \Tilde{z}^j:=\left(\Tilde{z}^j_v,\Tilde{z}^j_{\lambda},\Tilde{z}^j_u\right)=\left(v^k,\lambda^k,u^k\right)$.\;
  
  \While{inner stopping criteria is not satisfied}{
  Compute $\left(\eta^j, d^j\right)$ with $\eta_j\in\mathbb{R},\ d^j\in\mathbb{R}^{\bar{q}}$ as a solution of (\ref{LPframe}) with $\tau={\tau}_k$ at $\Tilde{z}^j$.\; 

  Let $m\geq 0$ be the minimum integer satisfying (\ref{check}).\;

  Let $\Tilde{z}^{j+1}:=\Tilde{z}^{j}+\theta^m d^j$.\;

  $j\gets j+1$.\;
  }
  Let $v^{k+1}=\Tilde{z}_v^j,\ {\lambda}^{k+1}=\Tilde{z}_{\lambda}^j,\ {u}^{k+1}=\Tilde{z}_{u}^j,\ {\tau}_{k+1}:=\sigma_2 {\tau}_{k}$.\;
  
  $k\gets k+1$.\;
}
\KwResult{The final iterate $v_{opt} := v^{k}$.}
\end{algorithm}

    
    


The framework of our algorithm, the global relaxation-based LP-Newton method (GRLPN), is presented in Algorithm \ref{alo1}, where the inner (while) loop tackles the equivalent KKT system \eqref{eqsys} associated with (\ref{NLP}) using the LP-Newton method. This step is carried out by iteratively solving the (\ref{LPframe}) and doing the line search (\ref{check}), all with a fixed relaxation parameter ${\tau}_k$. Conversely, the outer loop employs the global relaxation technique to systematically reduce the relaxation parameter ${\tau}_k$. In terms of the inner stopping criteria, the inner loop terminates if either of the conditions holds:
\begin{equation*}\tag{I}\label{inner}
     \Vert F_{\tau}\left(\Tilde{z}^{j}\right)\Vert_{\infty}=0 \,\mbox{ or }\,
    \Delta_j=0.
\end{equation*}
For the outer loop, we utilize the standard stopping criteria (\ref{outer}) for global relaxation as follows:
\begin{equation*}\tag{O}\label{outer}
    \ {\tau}_k\leq {\tau}_{min}.
\end{equation*}
 
In this context, the tag (\ref{inner}) refers to the stopping criteria for the inner loop, and (\ref{outer}) denotes the criteria for the outer loop.


    We next present the convergence results of the GRLPN. Theorem \ref{thr2} demonstrates the convergence of the inner loop, building on Corollary 4.1 from \cite{fischer2016globally}. Additionally, Theorems \ref{thr4} and \ref{thrm5} illustrate the convergence properties of the outer loop in Algorithm \ref{alo1}.
 
Before delving into the theorems, it is essential to define some key mathematical notations. Let $\tau:={\tau}_k$ be a fixed value, and define $Z:=\left\{\Tilde{z}\in\Tilde{\Omega} \mid F_{\tau}(\Tilde{z})=0\right\}$ as the solution set for equation (\ref{eqsys}), with $\Tilde{z}^* \in Z$ as an arbitrarily chosen but fixed point. For $p\in\left\{1,\cdots, 2^{2\overline{n}+3\overline{m}}\right\}$, let $Z^p:=\left\{\Tilde{z}\in\Tilde{\Omega}\mid F_{\tau}^p(\Tilde{z})=0\right\}$ denote the solution set of the selection function $F^p_{\tau}(\Tilde{z})$. Additionally, let $h^p_{\tau}(\Tilde{z})=\Vert F^p_{\tau}(\Tilde{z})\Vert$ represent the merit function associated with the selection function $F^p_{\tau}(\Tilde{z})$. We also define $\mathcal{I}(\Tilde{z}^*)=\left\{i=1,\cdots, 2^{2\overline{n}+3\overline{m}} \mid F_{\tau}(\Tilde{z}^*)=F^i_{\tau}(\Tilde{z}^*)\right\}$ as the index set of all selection functions at the point $\Tilde{z}^*$. For every active selection $p \in \mathcal{I}(\Tilde{z}^*)$, equation (\ref{eqsys}) can be reformulated as follows:
 \begin{equation}\label{sel}
\small
     F_{\tau}^p(\Tilde{z}^*): =\left[\begin{array}{c} \nabla_v L(v,\lambda)\\ 
    u+G(v)H(v)-\mathbf{\tau}\\
    \lambda^g_{\mathcal{A}_g^{1}\cup\mathcal{A}_g^{3,1}} \\
    g\left(v\right)_{\mathcal{A}_g^{2}\cup\mathcal{A}_g^{3,2}} \\
    \lambda^G_{\mathcal{A}_G^{1}\cup\mathcal{A}_G^{3,1}}\\
    G(v)_{\mathcal{A}_G^{2}\cup\mathcal{A}_G^{3,2}}\\
     \lambda^H_{\mathcal{A}_H^{1}\cup\mathcal{A}_{H}^{3,1}}\\
     H(v)_{\mathcal{A}_H^{2}\cup\mathcal{A}_H^{3,2}}\\
    \lambda^{GH}_{\mathcal{A}_{GH}^{1}\cup\mathcal{A}_{GH}^{3,1}}\\
      u_{\mathcal{A}_{GH}^{2}\cup\mathcal{A}_{GH}^{3,2}}\\
    \end{array}\right]=\mathbf{0}, \ 
    \text{where} \ \left\{     
    \begin{array}{c} 
    \mathcal{A}_g^{1}:=\mathcal{A}_g^{1}(\Tilde{z}^*):=\{ i\in [2\overline{n}]\mid \left(\lambda^g\right)^*_i<g_i(v^*)\},\\
    \mathcal{A}_g^{2}:=\mathcal{A}_g^{2}(\Tilde{z}^*):=\{ i\in [2\overline{n}]\mid \left(\lambda^g\right)^*_i> g_i(v^*)\},\\
    \mathcal{A}_g^{3}:=\mathcal{A}_g^{3}(\Tilde{z}^*):=\{ i\in [2\overline{n}]\mid \left(\lambda^g\right)^*_i= g_i(v^*)\},\\
     \mathcal{A}_G^{1}:=\mathcal{A}_G^{1}(\Tilde{z}^*):=\{ i\in [\overline{m}]\mid (\lambda^G)^*_i<G_i(v^*)\},\\
     \mathcal{A}_G^{2}:=\mathcal{A}_G^{2}(\Tilde{z}^*):=\{ i\in [\overline{m}]\mid (\lambda^G)^*_i> G_i(v^*)\},\\
     \mathcal{A}_G^{3}:=\mathcal{A}_G^{3}(\Tilde{z}^*):=\{ i\in [\overline{m}]\mid (\lambda^G)^*_i= G_i(v^*)\},\\
      \mathcal{A}_H^{1}:=\mathcal{A}_H^{1}(\Tilde{z}^*):=\{ i\in [\overline{m}]\mid (\lambda^H)^*_i<H_i(v^*)\},\\
      \mathcal{A}_H^{2}:=\mathcal{A}_H^{2}(\Tilde{z}^*):=\{ i\in [\overline{m}]\mid (\lambda^H)^*_i> H_i(v^*)\},\\
      \mathcal{A}_H^{3}:=\mathcal{A}_H^{3}(\Tilde{z}^*):=\{ i\in [\overline{m}]\mid (\lambda^H)^*_i= H_i(v^*)\},\\
       \mathcal{A}_{GH}^{1}:=\mathcal{A}_{GH}^{1}(\Tilde{z}^*):=\{ i\in [\overline{m}]\mid (\lambda^{GH})^*_i<u^*_i\},\\
       \mathcal{A}_{GH}^{2}:=\mathcal{A}_{GH}^{2}(\Tilde{z}^*):=\{ i\in [\overline{m}]\mid (\lambda^{GH})^*_i> u^*_i\},\\
       \mathcal{A}_{GH}^{3}:=\mathcal{A}_{GH}^{3}(\Tilde{z}^*):=\{ i\in [\overline{m}]\mid (\lambda^{GH})^*_i= u^*_i\}\\
    \end{array}
    \right.
    \end{equation}
For  $\mathcal{A}_g^{3}$, we use  $\mathcal{A}_g^{3,1}$ and  $\mathcal{A}_g^{3,2}$ to denote the partition of  $\mathcal{A}_g^{3}$, such that \[\mathcal{A}_g^{3,1}\bigcup  \mathcal{A}_g^{3,2}= \mathcal{A}_g^{3},\ \mathcal{A}_g^{3,1}\bigcap  \mathcal{A}_g^{3,2}=\emptyset, \ \min\left\{\lambda^g_{\mathcal{A}_g^{3}},\ g(v^*)_{\mathcal{A}_g^{3}}\right\}=\left[\begin{array}{c}\lambda^g_{\mathcal{A}_g^{3,1}}\\
g(v^*)_{\mathcal{A}_g^{3,2}}
\end{array}\right].\]
That is, $\mathcal{A}_g^{3}=(\mathcal{A}_g^{3,1},\mathcal{A}_g^{3,2})$. Similar definitions are used for  $\mathcal{A}_G^{3}=(\mathcal{A}_G^{3,1},\mathcal{A}_G^{3,2})$, $\mathcal{A}_H^{3}=(\mathcal{A}_H^{3,1},\mathcal{A}_H^{3,2})$, and $\mathcal{A}_{GH}^{3}=(\mathcal{A}_{GH}^{3,1},\mathcal{A}_{GH}^{3,2})$. Different combinations of these sets result in $2^{\mid \mathcal{A}_g^3 \cup \mathcal{A}_G^3 \cup \mathcal{A}_H^3 \cup \mathcal{A}_{GH}^3 \mid}$ possible configurations for selecting subsets among all partitions.

With these definitions established, we now turn to the analysis of the local convergence results for the inner loop, based on the subsequent assumption.
\begin{assumption}\label{assum1}
    Assume that the matrix below has full row rank:
    \begin{equation*}
    \left[\begin{array}{cccccc}
         \nabla^2_{vv} L(v,\lambda)&-\nabla g(v)_{(\cdot,\mathcal{A}_g^2\cup\mathcal{A}_g^{3,2})}& -\nabla G(v)_{(\cdot,\mathcal{A}_G^2\cup\mathcal{A}_G^{3,2})}&-\nabla H(v)_{(\cdot,\mathcal{A}_H^2\cup\mathcal{A}_H^{3,2})}&S_{(\cdot,\mathcal{A}_{GH}^2\cup\mathcal{A}_{GH}^{3,2})}&\mathbf{0} \\          S^{\top}&\mathbf{0}&\mathbf{0}&\mathbf{0}&\mathbf{0}&I_{(\cdot,\mathcal{A}_{GH}^1\cup\mathcal{A}_{GH}^{3,1})}\\
         \nabla g(v)^{\top}_{(\mathcal{A}_g^2\cup\mathcal{A}_g^{3,2},\cdot)}&\mathbf{0}&\mathbf{0}&\mathbf{0}&\mathbf{0}&\mathbf{0}\\
         \nabla G(v)^{\top}_{(\mathcal{A}_G^2\cup\mathcal{A}_G^{3,2},\cdot)}&\mathbf{0}&\mathbf{0}&\mathbf{0}&\mathbf{0}&\mathbf{0}\\
         \nabla H(v)^{\top}_{(\mathcal{A}_H^2\cup\mathcal{A}_H^{3,2},\cdot)}&\mathbf{0}&\mathbf{0}&\mathbf{0}&\mathbf{0}&\mathbf{0}\\
    \end{array}
    \right],
    \end{equation*}
    where $S$ is defined as $\left[G_1(v)\nabla H_1(v)+H_1(v)\nabla G_1(v),\cdots,G_{\overline{m}}(v)\nabla H_{\overline{m}}(v)+H_{\overline{m}}(v)\nabla G_{\overline{m}}(v)\right]$.
\end{assumption}
Building on Assumption \ref{assum1}, we demonstrate that the inner loop achieves quadratic convergence rate.
\begin{theorem}\label{thr2}
     Let Assumption \ref{assum1}
     hold at $\Tilde{z}^*$ for each partition $p\in\mathcal{I}(\Tilde{z}^*)$ in (\ref{sel}). Then there exists a radius $r>0$ such that any infinite sequence $\left\{\Tilde{z}^k\right\}$ generated by the inner loop of Algorithm \ref{alo1} with starting point $\Tilde{z}^0\in\mathcal{B}_r(\Tilde{z}^*)\cap\Omega$ converges to some $\hat{z}\in Z$, and the rate of convergence is Q-quadratic. Here, $\mathcal{B}_r(\Tilde{z}^*):=\left\{ \Tilde{z}\in\mathbb{R}^{\overline{q}}\mid \Vert \Tilde{z}-\Tilde{z}^*\Vert\leq r\right\}$ denote the ball of radius $r$ around $\Tilde{z}^*$.
     \end{theorem}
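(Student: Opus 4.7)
The plan is to deduce Theorem~\ref{thr2} from Corollary~4.1 of~\cite{fischer2016globally}. That result guarantees Q-quadratic convergence of the LP--Newton iterates generated by~\eqref{LPframe}--\eqref{check} provided (a)~the residual $F_\tau$ is piecewise continuously differentiable (hence semismooth) at $\tilde z^*$, and (b)~a local error bound of the form $\kappa\,\mathrm{dist}(\tilde z,Z)\le\|F_\tau(\tilde z)\|$ holds on some neighbourhood of $\tilde z^*$ intersected with $\tilde\Omega$. Property~(a) is straightforward, so the bulk of the work will go into establishing~(b).

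For~(a), every coordinate of $F_\tau$ in~\eqref{eqsys} is either affine in $\tilde z$, a quadratic polynomial of the form $u_i+G_i(v)H_i(v)-\tau$ (since both $G$ and $H$ are affine by the definitions in~\eqref{compact}), or a componentwise minimum of two affine functions. Hence $F_\tau$ is a piecewise continuously differentiable selection of at most $2^{2\overline n+3\overline m}$ quadratic functions $F_\tau^p$, and is therefore strongly semismooth on $\tilde\Omega$.

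To obtain the error bound, I would fix any active index $p\in\mathcal I(\tilde z^*)$ and show that the Jacobian $\nabla F_\tau^p(\tilde z^*)$ has full row rank. In the sign-pattern partition of~\eqref{sel}, the rows of $\nabla F_\tau^p(\tilde z^*)$ corresponding to $\lambda^g_{\mathcal A_g^1\cup\mathcal A_g^{3,1}}$, $\lambda^G_{\mathcal A_G^1\cup\mathcal A_G^{3,1}}$, $\lambda^H_{\mathcal A_H^1\cup\mathcal A_H^{3,1}}$ and $\lambda^{GH}_{\mathcal A_{GH}^1\cup\mathcal A_{GH}^{3,1}}$ are identity rows in the multiplier coordinates; pivoting them out (together with the identity block provided by the $u$-update) reduces the full-row-rank question for $\nabla F_\tau^p(\tilde z^*)$ to the full-row-rank property of precisely the matrix displayed in Assumption~\ref{assum1}. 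By the Lyusternik--Graves/implicit function argument, this furnishes a per-piece error bound $\kappa_p\,\mathrm{dist}(\tilde z,Z^p)\le\|F_\tau^p(\tilde z)\|$ on some ball $\mathcal B_{r_p}(\tilde z^*)$.

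The main obstacle will be to splice these per-piece bounds into a single error bound for the full selection $F_\tau$. Here the key observation is that on a sufficiently small ball $\mathcal B_r(\tilde z^*)$ the branch picked by each $\min$-operator at $\tilde z$ must correspond to some active selection at $\tilde z^*$ (the coordinates in $\mathcal A^1\cup\mathcal A^2$ are stable by strict inequality, and the ties in $\mathcal A^3$ are handled by the partitions $\mathcal A^{3,1},\mathcal A^{3,2}$ enumerating the finitely many possibilities). Consequently $F_\tau(\tilde z)=F_\tau^{p(\tilde z)}(\tilde z)$ for some $p(\tilde z)\in\mathcal I(\tilde z^*)$, and since $Z^{p(\tilde z)}\subseteq Z$, choosing $r:=\min_p r_p$ and $\kappa:=\min_{p\in\mathcal I(\tilde z^*)}\kappa_p$ promotes the per-piece bounds to the desired $\kappa\,\mathrm{dist}(\tilde z,Z)\le\|F_\tau(\tilde z)\|$. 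With (a) and (b) in place, Corollary~4.1 of~\cite{fischer2016globally} applies verbatim: any starting point in $\mathcal B_r(\tilde z^*)\cap\tilde\Omega$ produces an infinite sequence for which the unit step is eventually accepted in~\eqref{check}, convergence to some $\hat z\in Z$ occurs, and the rate is Q-quadratic, establishing Theorem~\ref{thr2}.
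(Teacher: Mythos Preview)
Your proposal is correct and follows essentially the same route as the paper: both invoke the convergence framework of \cite{fischer2016globally} and reduce the key hypothesis to a regularity condition on each active selection $F^p_\tau$, which is precisely what Assumption~\ref{assum1} encodes. The paper's proof is very terse---it notes that the selection functions are quadratic (hence have Lipschitz derivatives), then states that ``Condition~2 from \cite{fischer2016convergence}'' holds because ``MFCQ applies to~(\ref{sel})'' under Assumption~\ref{assum1}---whereas you spell out the same mechanism as a local error bound obtained from full row rank of $\nabla F^p_\tau(\tilde z^*)$, correctly observing that pivoting out the identity blocks in the multiplier and $u$ coordinates reduces the rank question to the matrix in Assumption~\ref{assum1}. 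Your explicit handling of the splicing step (the active selection near $\tilde z^*$ lies in $\mathcal I(\tilde z^*)$, and $Z^p\cap\tilde\Omega\subseteq Z$) is a detail the paper leaves entirely to the cited references, so in that respect your write-up is more self-contained, but the underlying argument is the same.
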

     \begin{proof}
     Observing that the selection functions $ F^i_{\tau}(\Tilde{z}),\ i=1,\cdots, 2^{2\overline{n}+3\overline{m}}$ are quadratic functions, and therefore have Lipschitz continuous derivatives. According to \cite{fischer2016globally}, our objective is to establish that Condition 2 from \cite{fischer2016convergence} is satisfied. This condition is met if the MFCQ applies to (\ref{sel}). Under Assumption \ref{assum1}, it is straightforward to verify that MFCQ indeed holds.
     \end{proof}
     Note that Assumption \ref{assum1} is specifically required for the inner loop to achieve quadratic convergence. However, even if this assumption does not hold, the inner loop still converges. To demonstrate the practical applicability of Assumption \ref{assum1}, we provide an example showing how it can be satisfied in the context of our problem.
      \begin{example}
          Consider the following MPEC:
          \begin{equation}\label{eg1}
    \begin{array}{cl}
         \min\limits_{v}&v_1+v_2-v_3  \\
         \hbox{s.t.}& g(v)=v_3\leq 0,\\
         {}&G(v)=v_1\geq 0,\\
         {}&H(v)=v_2\geq 0,\\
         {}&G(v)H(v)=v_1v_2= 0,
    \end{array}
\end{equation}
with $v=\left(v_1,v_2,v_3\right)\in\mathbb{R}^3$.
According to \cite{flegel2005constraint}, the origin is the unique solution to this problem and satisfies the MPEC-MFCQ condition. We proceed to consider a global relaxation problem of (\ref{eg1}), replacing the complementarity constraint with $G(v)H(v)=v_1v_2\leq 0.02$. The KKT system for the relaxed problem with a slack variable $u$ is detailed as follows:
\begin{equation}\label{eg2}
F_{\tau}(v,\lambda):=\left[\begin{array}{c}
     \nabla_v L(v,\lambda)\\ 
     u+ v_1 v_2-0.02\\
    \min\{\lambda_1, v_3\}\\
     \min\{\lambda_2,v_1\}\\
     \min\{\lambda_3,v_2\}\\
    \min\{\lambda_4,u\}\\
    \end{array}\right]=\mathbf{0}, 
\end{equation}
where $$L(v,\lambda)=v_1+v_2-v_3+\lambda_1v_3-\lambda_2v_1-\lambda_3v_2+\lambda_4(v_1v_2-0.02)$$ is the corresponding Lagrangian function. Verification shows that the point $(v_1^*,v_2^*,v_3^*,\lambda_1^*,\lambda_2^*,\lambda_3^*,\lambda_4^*,u^*)=(0,0,0,1,1,1,0,0.02)$ is a solution to (\ref{eg2}). As there is only one selection function and the associated matrix in Assumption \ref{assum1} maintains full row rank, Assumption \ref{assum1} is satisfied.
      \end{example}

Finally, we are ready to discuss the convergence properties of the outer loop of Algorithm \ref{alo1}. The following theorem demonstrates that the sequence generated by Algorithm \ref{alo1} converges to a C-stationary point of problem (\ref{compact}). We begin by defining a key concept: the C-stationary point within the context of an MPEC.
  
 \begin{definition}
Let $v$ be a feasible point for the MPEC in (\ref{compact}). 
Then $v$ is said to be a C-stationary point if there are multipliers $\gamma\in \mathbb{R}^{\overline{m}},\ \nu \in \mathbb{R}^{\overline{m}}$ and $\lambda \in \mathbb{R}^{2\overline{n}}$, such that
$$
\nabla f\left(v\right)-\sum_{i=1}^{\overline{m}} \gamma_{i} \nabla G_{i}\left(v\right) -\sum_{i=1}^{\overline{m}} \nu_{i} \nabla H_{i}\left(v\right)-\sum_{i=1}^{2\overline{n}} \lambda_{i} \nabla g_{i}\left(v\right)=0,
$$
with 
$\lambda_i\geq0,\ \lambda_ig_i(v)=0$ for $i=1,\cdots,2\overline{n}$,   $\gamma_{i}=0$ for $ i \in I_{H},\ \nu_{i}=0$ for $i \in I_{G}$, and $\gamma_{i} \nu_{i} \geq 0$ for $i \in I_{GH}$.
\end{definition}
It is worth noting that various other definitions of stationarity exist for problem (\ref{compact}). For a more in-depth exploration of these concepts, see key references like \cite{dempe2019two,dempe2012karush,flegel2005constraint,zemkoho2021theoretical}. Building on the definition of C-stationary point, we present the main convergence result.
\begin{theorem} \label{thr4}{\rm}
Let $\{{\tau}_{k}\} \downarrow 0$ and let $v^k$ be a stationary point of \eqref{NLP} with $v^{k} \rightarrow v$, where $v$ is the feasible point of \eqref{compact}. Then $v$ is a C-stationary point of the problem (\ref{compact}).
\end{theorem}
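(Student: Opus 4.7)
The plan is to begin with the KKT conditions of (\ref{NLP}) at $v^k$. Since $v^k$ is stationary for (\ref{NLP}) with parameter $\tau_k$, there exist multipliers $\lambda^{g,k}, \lambda^{G,k}, \lambda^{H,k}, \lambda^{GH,k}$ satisfying (\ref{26a})--(\ref{26e}). Applying the product rule to $G_i H_i$, the stationarity identity reorganises as
\begin{equation*}
\nabla f(v^k) - \sum_{i=1}^{2\overline{n}} \lambda^{g,k}_i \nabla g_i(v^k) - \sum_{i=1}^{\overline{m}} \gamma^k_i \nabla G_i(v^k) - \sum_{i=1}^{\overline{m}} \nu^k_i \nabla H_i(v^k) = 0,
\end{equation*}
where the effective multipliers are defined by $\gamma^k_i := \lambda^{G,k}_i - \lambda^{GH,k}_i H_i(v^k)$ and $\nu^k_i := \lambda^{H,k}_i - \lambda^{GH,k}_i G_i(v^k)$. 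The goal becomes passing to the limit in this identity and checking that the resulting $(\lambda,\gamma,\nu)$ matches the C-stationarity conditions relative to the index sets $I_g$, $I_G$, $I_H$, $I_{GH}$ evaluated at $v$.

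Next I would establish boundedness of $\{\lambda^{g,k}\}$, $\{\gamma^k\}$, $\{\nu^k\}$ by a standard contradiction argument. Setting $\sigma_k := \|(\lambda^{g,k},\gamma^k,\nu^k)\|_\infty$ and assuming $\sigma_k \to \infty$, I divide the reorganised identity by $\sigma_k$ and pass to a convergent subsequence of the normalised multipliers to obtain a non-zero limit $(\bar\lambda,\bar\gamma,\bar\nu)$ with $\sum_i \bar\lambda_i \nabla g_i(v) + \sum_i \bar\gamma_i \nabla G_i(v) + \sum_i \bar\nu_i \nabla H_i(v) = 0$. Using the complementarity relations from (\ref{26a})--(\ref{26e}) along with continuity, the supports of $\bar\lambda$, $\bar\gamma$, $\bar\nu$ are contained in $I_g$, $I_G \cup I_{GH}$, $I_H \cup I_{GH}$ respectively, with $\bar\lambda \geq 0$ and appropriate sign information on the pure pieces $I_G$ and $I_H$. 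Splitting $\bar\gamma_{I_{GH}}$ and $\bar\nu_{I_{GH}}$ into positive and negative parts produces a nontrivial positive-linearly dependent combination of the gradient vectors listed in $V_a$ and $V_b$ of Definition \ref{def1}, directly contradicting the MPEC-MFCQ guaranteed by Theorem \ref{thr1}. Hence the multiplier sequences are bounded and admit a common convergent subsequence.

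Passing to the limit along such a subsequence yields multipliers $(\lambda,\gamma,\nu)$ satisfying the gradient equation of the C-stationarity definition; the nonnegativity and complementarity on the $g$-constraints transfer immediately. For $i \in I_H$, where $G_i(v) > 0$ and $H_i(v) = 0$, continuity gives $G_i(v^k) > 0$ for large $k$, forcing $\lambda^{G,k}_i = 0$, so $\gamma^k_i = -\lambda^{GH,k}_i H_i(v^k)$ tends to $0$ by boundedness of $\lambda^{GH,k}_i$ and $H_i(v^k) \to 0$; symmetrically $\nu_i = 0$ on $I_G$. For $i \in I_{GH}$, combining the complementarity identities $\lambda^{G,k}_i G_i(v^k) = 0$, $\lambda^{H,k}_i H_i(v^k) = 0$, and $\lambda^{GH,k}_i[\tau_k - G_i(v^k) H_i(v^k)] = 0$ in a direct expansion gives
\begin{equation*}
\gamma^k_i \nu^k_i = \lambda^{G,k}_i \lambda^{H,k}_i + \left(\lambda^{GH,k}_i\right)^2 \tau_k \geq 0,
\end{equation*}
and passing to the limit yields $\gamma_i \nu_i \geq 0$, closing the C-stationarity requirement.

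The main obstacle is the boundedness step: the effective multipliers mix signs because of the $\lambda^{GH,k}_i H_i(v^k)$ and $\lambda^{GH,k}_i G_i(v^k)$ cross terms, so extracting a contradiction with MPEC-MFCQ demands a careful decomposition of the normalised limit according to which of $G_i(v)$, $H_i(v)$ vanishes, matched exactly to the partition $I_g, I_G, I_H, I_{GH}$ in Definition \ref{def1}. Once that decomposition is in place, the remaining limit passages and the algebraic identity for $\gamma^k_i\nu^k_i$ on $I_{GH}$ are essentially mechanical.
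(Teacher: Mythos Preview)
Your proposal is correct in outline and supplies a direct argument for what the paper obtains by citation: the paper's proof of Theorem~\ref{thr4} is the single sentence invoking Theorem~\ref{thr1} together with Theorem~5 of \cite{samuel2025mathematical}. You are essentially reproducing the content of that cited result --- the classical Scholtes--relaxation convergence proof via effective multipliers $\gamma^k_i = \lambda^{G,k}_i - \lambda^{GH,k}_i H_i(v^k)$ and $\nu^k_i = \lambda^{H,k}_i - \lambda^{GH,k}_i G_i(v^k)$, boundedness by a normalisation argument contradicting MPEC-MFCQ, and the identity $\gamma^k_i\nu^k_i = \lambda^{G,k}_i\lambda^{H,k}_i + (\lambda^{GH,k}_i)^2\tau_k \geq 0$ on the biactive set. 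The paper's route is shorter; yours is self-contained and exposes the mechanism.

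Two places deserve tightening. First, when you claim $\gamma^k_i \to 0$ for $i \in I_H$ ``by boundedness of $\lambda^{GH,k}_i$,'' that boundedness has not been established --- you only proved $(\lambda^{g,k},\gamma^k,\nu^k)$ bounded. The fix is short: if $\lambda^{GH,k}_i > 0$ then complementarity gives $G_i(v^k)H_i(v^k) = \tau_k > 0$, hence $H_i(v^k) > 0$, whence $\lambda^{H,k}_i = 0$ and $\nu^k_i = -\lambda^{GH,k}_i G_i(v^k)$; since $\nu^k_i$ is bounded and $G_i(v^k) \to G_i(v) > 0$, the factor $\lambda^{GH,k}_i$ is bounded after all, and $\gamma^k_i = -\lambda^{GH,k}_i H_i(v^k) \to 0$. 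Second, the phrase ``splitting $\bar\gamma_{I_{GH}}$ and $\bar\nu_{I_{GH}}$ into positive and negative parts'' is not the right device for the MPEC-MFCQ contradiction: under the usual formulation the coefficients of $\nabla G_i,\nabla H_i$ on $I_{GH}$ are free and no splitting is needed, whereas under the sign-restricted variant in Definition~\ref{def1} a positive/negative split of the \emph{same} gradient vector does not produce an admissible positive-linear dependence. Drop the splitting remark and invoke the constraint qualification directly on the normalised limit.
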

\begin{proof}
With the fulfilment of the MPEC-MFCQ in Theorem \ref{thr1} and Theorem 5 in \cite{samuel2025mathematical}, we directly obtain the result.
\end{proof}

While Theorem \ref{thr4} indicates convergence to a C-stationary point if the inner loop of Algorithm \ref{alo1} is exactly solved, achieving such precision is often impractical numerically. Thus, we implement a practical stopping criterion for the inner loop as follows: 
\begin{equation*}\tag{$\rm I^*$}\label{innerap}
    \Vert F_{\tau}(\Tilde{z}^{j})\Vert_{\infty}\leq\epsilon_k,
\end{equation*}
 where $\epsilon_k>0$ is the tolerance level.

Moreover, we provide the following result (see Appendix \ref{appendixb} for the proof) that using (\ref{innerap}) as the stopping criteria does not compromise the convergence properties toward a C-stationary point.
\begin{theorem}\label{thrm5}
Let $\left\{{\tau}_k\right\}\downarrow0$, $\left\{\epsilon_k\right\}\downarrow0$ and assume that $\epsilon_k=O({\tau}_k^2)$ holds. Assume further that MPEC-MFCQ holds for (\ref{MPEC}). Let $\left\{v^k\right\}$ be a sequence generated by stopping the inner loop of Algorithm \ref{alo1} with stopping criteria \eqref{innerap}. Then any accumulation point $v^*$ of the sequence $\left\{v^k\right\}$ is a C-stationary point of problem \eqref{compact}.
\end{theorem}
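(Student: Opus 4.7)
The plan is to pass to the limit in the approximate KKT system that \eqref{innerap} guarantees at every outer iteration, and to verify via MPEC-MFCQ and careful tracking of the error terms that the limit yields C-stationarity of $v^*$.

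First, from $\|F_{\tau_k}(\tilde z^k)\|_\infty \leq \epsilon_k$ together with $\tilde z^k \in \tilde\Omega$, I would extract, for the iterate $(v^k, \lambda^k, u^k)$, the nonnegativity of the primal quantities $g(v^k), G(v^k), H(v^k), u^k$ and of the duals $\lambda^{g,k}, \lambda^{G,k}, \lambda^{H,k}, \lambda^{GH,k}$; the approximate stationarity $\|\nabla_v L(v^k, \lambda^k)\|_\infty \leq \epsilon_k$; the approximate relaxed feasibility $|u_i^k + G_i(v^k) H_i(v^k) - \tau_k| \leq \epsilon_k$; and the approximate complementary slackness implied by the four $\min\{\cdot,\cdot\}$ blocks. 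Passing to a subsequence, I assume $v^k \to v^*$, and since $\tau_k, \epsilon_k \downarrow 0$, feasibility of $v^*$ for \eqref{compact} is immediate from $G_i(v^k) H_i(v^k) \leq \tau_k + \epsilon_k \to 0$.

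Next, I introduce the combined multipliers $\gamma_i^k := \lambda_i^{G,k} - \lambda_i^{GH,k} H_i(v^k)$ and $\nu_i^k := \lambda_i^{H,k} - \lambda_i^{GH,k} G_i(v^k)$, so that the approximate stationarity reads $\nabla f(v^k) - \sum_i \lambda_i^{g,k} \nabla g_i(v^k) - \sum_i \gamma_i^k \nabla G_i(v^k) - \sum_i \nu_i^k \nabla H_i(v^k) = O(\epsilon_k)$. The crucial step is then boundedness of $(\lambda^{g,k}, \lambda^{G,k}, \lambda^{H,k}, \lambda^{GH,k})$. Assuming the contrary, let $\rho_k$ denote its $\infty$-norm, divide through by $\rho_k$, and extract a subsequence to obtain a nontrivial limit of normalized multipliers. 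Approximate complementarity forces $\lambda_i^{g,k}/\rho_k \to 0$ for $i \notin I_g$, $\lambda_i^{G,k}/\rho_k \to 0$ for $i \notin I_G \cup I_{GH}$, and $\lambda_i^{H,k}/\rho_k \to 0$ for $i \notin I_H \cup I_{GH}$; combined with $H_i(v^k) = O(\tau_k)$ on $I_H$ and $G_i(v^k) = O(\tau_k)$ on $I_G$ (from relaxed feasibility together with strict positivity of the other factor at $v^*$), I would then obtain a nontrivial positive combination of the gradients in $V_a \cup V_b$ from Definition \ref{def1} that vanishes at $v^*$, contradicting MPEC-MFCQ.

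With boundedness in hand, I pass to a further subsequence so that $(\lambda^{g,k}, \gamma^k, \nu^k) \to (\lambda^{g,*}, \gamma^*, \nu^*)$, and the limit relation $\nabla f(v^*) - \sum_i \lambda_i^{g,*} \nabla g_i(v^*) - \sum_i \gamma_i^* \nabla G_i(v^*) - \sum_i \nu_i^* \nabla H_i(v^*) = 0$ follows. The sign/support conditions for C-stationarity are verified as follows. Nonnegativity $\lambda^{g,*} \geq 0$ with support in $I_g$ is immediate. For $i \in I_H$, approximate complementarity forces $\lambda_i^{G,k} \leq \epsilon_k \to 0$ (since $G_i(v^k) \geq c > 0$ eventually), and boundedness of $\lambda^{GH,k}$ together with $H_i(v^k) \leq (\tau_k + \epsilon_k)/c$ yields $\lambda_i^{GH,k} H_i(v^k) \to 0$, so $\gamma_i^* = 0$; the argument is symmetric for $\nu_i^* = 0$ on $I_G$. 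For $i \in I_{GH}$ I expand $\gamma_i^k \nu_i^k = \lambda_i^{G,k} \lambda_i^{H,k} - \lambda_i^{GH,k}\bigl(\lambda_i^{G,k} G_i(v^k) + \lambda_i^{H,k} H_i(v^k)\bigr) + (\lambda_i^{GH,k})^2 G_i(v^k) H_i(v^k)$; the first and last summands are nonnegative (by nonnegativity of their individual factors), while the middle cross-terms are $O(\epsilon_k)$ by approximate complementarity of the pairs $(\lambda^G, G)$ and $(\lambda^H, H)$ combined with boundedness of $\lambda^{GH,k}$, so passing to the limit yields $\gamma_i^* \nu_i^* \geq 0$.

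The main obstacle, in my view, is keeping the various error terms in balance across the approximate complementarity blocks. The assumption $\epsilon_k = O(\tau_k^2)$ is precisely what allows $\epsilon_k/\tau_k \to 0$ in the intermediate estimates: on the strictly active indices $I_G, I_H$, relaxed feasibility forces $G_i$ or $H_i$ to be $O(\tau_k)$, so the products $\lambda_i^{GH,k} G_i(v^k)$ and $\lambda_i^{GH,k} H_i(v^k)$ sit at order $\tau_k$, while cross-terms like $\lambda_i^{GH,k} \lambda_i^{G,k} G_i(v^k)$ arising in the $I_{GH}$ sign analysis are bounded by a constant multiple of $\epsilon_k$; without $\epsilon_k \ll \tau_k^2$ these error contributions could dominate the nonnegative terms and destroy the sign condition $\gamma_i^* \nu_i^* \geq 0$ in the limit.
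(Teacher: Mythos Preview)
Your overall strategy is right, but there is a genuine gap in the boundedness step. You assert that MPEC-MFCQ yields boundedness of the \emph{full} multiplier vector $(\lambda^{g,k},\lambda^{G,k},\lambda^{H,k},\lambda^{GH,k})$. This is not true in general: for indices $i\in I_{GH}$ (i.e.\ $G_i(v^*)=H_i(v^*)=0$) the multiplier $\lambda_i^{GH,k}$ can diverge as $\tau_k\downarrow 0$, because in the stationarity equation it only appears through the products $\lambda_i^{GH,k}H_i(v^k)$ and $\lambda_i^{GH,k}G_i(v^k)$, and both factors $G_i(v^k),H_i(v^k)$ tend to zero. If you normalize by $\rho_k=\|\lambda^{GH,k}\|_\infty$ and the maximum sits at such an index, every normalized coefficient of $\nabla G_i,\nabla H_i,\nabla g_i$ in the limiting stationarity relation vanishes, so you obtain the trivial identity $0=0$ rather than a contradiction to MPEC-MFCQ. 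Consequently your $I_{GH}$ sign argument also breaks down, since the middle cross-term $\lambda_i^{GH,k}\bigl(\lambda_i^{G,k}G_i(v^k)+\lambda_i^{H,k}H_i(v^k)\bigr)$ is controlled only if $\lambda_i^{GH,k}$ is bounded.

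The paper circumvents this by proving boundedness only of the \emph{products} $\delta_i^{G,k}:=\lambda_i^{GH,k}H_i(v^k)$ and $\delta_i^{H,k}:=\lambda_i^{GH,k}G_i(v^k)$ (together with $\lambda^{g,k},\lambda^{G,k},\lambda^{H,k}$ and $\lambda^{GH,k}$ restricted to $I_G\cup I_H$), which is exactly what MPEC-MFCQ delivers. The sign condition $\gamma_i^*\nu_i^*\geq 0$ on $I_{GH}$ is then obtained by contradiction: if, say, $\gamma_i^*<0<\nu_i^*$, then $\delta_i^{G,*}>0$ forces $\lambda_i^{GH,k}\to+\infty$, whence $u_i^k\leq\epsilon_k$ and $|G_i(v^k)H_i(v^k)-\tau_k|\leq 2\epsilon_k$; on the other hand $\nu_i^*>0$ forces $\lambda_i^{H,*}>0$ and hence $H_i(v^k)\leq\epsilon_k$, giving $G_i(v^k)H_i(v^k)=o(\sqrt{\epsilon_k})$, so $\tau_k/\sqrt{\epsilon_k}\to 0$, contradicting $\epsilon_k=O(\tau_k^2)$. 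This is where the rate assumption actually enters; your final paragraph misidentifies its role.
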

 To summarize, this section introduces the GRLPN for solving the \eqref{compact} along with associated convergence results. In the next section, we will provide the numerical experiments.

\section{Numerical results}\label{sec4}

In this section, we report the numerical performance of the \texttt{GRLPN}. All numerical tests were conducted in MATLAB R2021b on a Windows 11 Dell Laptop equipped with an 11th Gen Intel(R) Core(TM) i7-11800H CPU at 2.3 GHz and 16 GB of RAM. All datasets, collected from the LIBSVM library\footnote{\url{https://www.csie.ntu.edu.tw/~cjlin/libsvmtools/datasets/}}, are used in various applications in computer science, astroparticle studies, biomedical engineering, space physics, and ionosphere science. We use a three-fold cross-validation set, i.e., $T=3$. Accordingly, the regularization hyperparameter $C$ was adjusted by a scaling factor of $\frac{3}{2}$ to accommodate the utilization of the entire dataset in the post-processing phase of the model construction, as discussed in \cite{kunapuli2008classification}.

\begin{table}[htbp]
\centering
\caption{Computational results for $T=3$.} \label{table2}
\begin{tabular}{cccccccc}
\rowcolor{LightCyan}
 Data set& Method  & $E_{t}$ ($\%$) & $E_{C}$ ($\%$) & \textit{Vio}& \textit{time}&\textit{size}& $C$ \\
\rowcolor{LightCyan}
$(l_1,l_2,n)$&{}&{}&{}&{}&{}&{}&{}\\
\midrule
fourclass & \texttt{GRLPN} & \textbf{21.53}&22.67 & 4.80e$-$3 & 3:24&9082&1.6e$-$1 \\
$(300,562,2)$ &\texttt{GRST} &36.83&33.33 & \textbf{2.11e$-$4} & 00:48 &1816&1.20e$-$4\\
{}&\texttt{GS}&\textbf{21.53}&\textbf{22}&-&\textbf{00:03}&3&1.5e$-$1\\
\rowcolor{LightCyan}
nonskin &\texttt{GRLPN}&\textbf{0.00}&\textbf{0.00}&\textbf{4.5e$-$3}&22:18&10919&7.37\\
\rowcolor{LightCyan}
$(360,640,3)$&\texttt{GRST}&29.53&8.33&2.0e$-$2&08:54&2183&1.5e$-$4\\
\rowcolor{LightCyan}
{}&\texttt{GS}&\textbf{0.00}&\textbf{0.00}&-&\textbf{00:04}&4&1.5e$-$4\\
svmguide1 &\texttt{GRLPN}&\textbf{8.28}&10.56&4.5e$-$3&19:01&10956&1.5e$-$2\\
$(360,640,4)$&\texttt{GRST}&18.75&5.56&\textbf{1.3e$-$3}&02:03&2190&1.5e$-$4\\
{}&\texttt{GS}&14.84&\textbf{5.00}&-&\textbf{0:15}&5&1.5e$-$2\\
\rowcolor{LightCyan}
liver &\texttt{GRLPN}&34.78&\textbf{14.14}&\textbf{1.61e$-$5}&00:30&3163&1.47\\
\rowcolor{LightCyan}
$(99,46,5)$&\texttt{GRST}&48.00&25.25&4.75e$-$4&00:08&631&1.5e$-$4\\
\rowcolor{LightCyan}
{}&\texttt{GS}&\textbf{30.43}&16.16&-&\textbf{00:04}&6&1.5e1\\
diabetes & \texttt{GRLPN}&\textbf{22.69}& 25.19  &2.90e$-$3 &10:24&8404&5.8e$-$1 \\
$(270,498,8)$& \texttt{GRST} & 33.13& 38.15  & \textbf{1.00e$-$4}& \textbf{01:18}&1678&1.50e$-$4\\
{}&\texttt{GS}& 24.50&\textbf{22.59}&-&03:41&9&1.50e4\\
\rowcolor{LightCyan}
breast  & \texttt{GRLPN} & \textbf{2.26}&\textbf{2.92}  & \textbf{1.07e$-$4}& 04:35 &7578&9.3e$-$1 \\
\rowcolor{LightCyan}
$(240,172,10)$& \texttt{GRST} & 29.57&45.00 & 6.17e$-$4& \textbf{02:58}&1512&1.50e$-$4\\
\rowcolor{LightCyan}
{}&\texttt{GS}&2.71&3.33&-&28:24&11&1.5e$-$1\\
heart  & \texttt{GRLPN} &\textbf{12.35}& \textbf{12.17} & \textbf{6.57e$-$5}&06:58&6159&1.14e$-$2\\
$(189,81,13)$& \texttt{GRST} &43.21&44.97 &2.33e$-$4 &\textbf{01:46}&1227 &2.33e$-$4\\
{}&\texttt{GS}&\textbf{12.35}& 12.70&-&08:31:48&14&1.5e$-$1\\
\rowcolor{LightCyan}
australian & \texttt{GRLPN} & \textbf{14.52}&\textbf{11.89}  & \textbf{2.31e$-$4} & 08:30 &8626 &1.51e$-$2\\
\rowcolor{LightCyan}
$(270,420,14)$ &\texttt{GRST} &44.29&44.81  & 4.60e$-$3 & \textbf{07:57} &1720&1.50e$-$4\\
\rowcolor{LightCyan}
{}&\texttt{GS}& \textbf{14.52}&12.59  &-& {32:43:38}&15&1.50e$-$2\\
german.number & \texttt{GRLPN} & \textbf{25.16}&\textbf{16.11}   & \textbf{1.44e$-$8} & \textbf{25:57} &11696 &1.42e$-$2\\
$(360,640,24)$ &\texttt{GRST} &31.09&28.06  & 1.99e$-$4 & 26:31 &2330&1.99e$-$4\\
{}&\texttt{GS}&-&-&-&-&-&-\\
\rowcolor{LightCyan}
ionosphere & \texttt{GRLPN}&6.67 &\textbf{6.10}&\textbf{1.33e$-$6}&\textbf{08:07}&8646&7.5e$-$1 \\
\rowcolor{LightCyan}
$(246,105,34)$& \texttt{GRST} &\textbf{2.90}&33.33& 2.10e$-$3 &29:48&1716 &1.50e$-$4\\
\rowcolor{LightCyan}
{}&\texttt{GS}&-&-&-&-&-&-\\


splice & \texttt{GRLPN}&\textbf{21.25}&\textbf{8.06}  &\textbf{1.88e$-$4} & \textbf{16:07}  &13028&1.50e$-$2\\
$(360,640,60)$& \texttt{GRST} & 24.21 &17.50&9.15e$-$2 &37:05&2582&1.65e$-$2\\
{}&\texttt{GS}&-&-&-&-&-&-\\
\rowcolor{LightCyan}
a1a & \texttt{GRLPN} & \textbf{20.44}&\textbf{17.08}  & \textbf{2.04e$-$4} & \textbf{11:15} &13411&7.55e$-$2\\
\rowcolor{LightCyan}
$(300,200,119)$ &\texttt{GRST} &25.44&21.00  &1.82e$-$2 & 55:57 &2635&1.50e$-$2 \\
\rowcolor{LightCyan}
{}&\texttt{GS}&-&-&-&-&-&-\\
w1a & \texttt{GRLPN} & \textbf{0.41}&\textbf{8.33}  & \textbf{3.40e$-$3}& \textbf{42:31 }&20108&7.67e$-$2 \\
$(300,200,300)$ &\texttt{GRST} &\textbf{0.41}&14.44  & 5.30e$-$3 & 2:42:22 &3902&2.92e$-$2 \\
{}&\texttt{GS}&-&-&-&-&-&-\\
\bottomrule
\end{tabular}
\end{table}


Here, we evaluate the performance of \texttt{GRLPN} against two benchmark methods: the grid search method (denoted as \texttt{GS}) and the global relaxation method solved using the MATLAB-based package SNOPT (denoted as \texttt{GRST}). The parameter settings for these methods are as follows: lower and upper bounds for the regularization hyperparameter $C$ are set as $C_{lb} = 1.0\times 10^{-4}$ and $C_{ub} = 1.0\times 10^{4}$, respectively. The weight vector bounds are $\left(\overline{w}_{lb}\right)_i = 1.0\times 10^{-6}$ and $\left(\overline{w}_{ub}\right)_i = 1.5$ for $i\in[n]$. The relaxation parameters used are ${\tau}_{0} = 1.0\times 10^{-1}$ and ${\tau}_{\min} = 1.0\times 10^{-8}$, with a reduction factor of $\sigma = 1.0\times 10^{-1}$.

For the \texttt{GRLPN}, the termination criterion for inner loops is defined by (\ref{innerap}), and the convergence tolerance for the inner loop is set at $\epsilon_k = 1.0\times 10^{-2}$. The outer-loop iterations are terminated once $\operatorname{Vio}\left(v\right)<1.0\times 10^{-3}$ is satisfied, where $\operatorname{Vio(v)}$ is defined in \eqref{Vio}. The linear programming framework (\ref{LPframe}) associated with \texttt{GRLPN} is handled using the Gurobi optimizer\footnote{\url{https://www.gurobi.com/documentation/}}.

In the \texttt{GRST} configuration, the default settings of SNOPT are used, with the 'iteration limit' specifically adjusted to 200,000. For the \texttt{GS} method, we vary $C$ over the set $\left\{10^{-4},10^{-3},10^{-2},10^{-1},1,10,100,1000,10^4\right\}$ and $\overline{w}_j$ over $\left\{0, 0.75, 1.5\right\}$ for $j \in [n]$. To establish an initial point for \texttt{GRLPN} and \texttt{GRST}, we adopt a two-step strategy: First, we select $C$ from $\frac{1}{(T-1)\times m_2}\left\{10^{-2},10^{-1},1,10,100\right\}$, and then solve a classical SVC without bound constraints on $w$ over the training set to obtain the feature vector $w$. The $\overline{w}$ is subsequently set to $\min\left\{\lvert w\rvert, \overline{w}_{ub}\right\}$. In datasets with missing entries (e.g., a1a, w1a), we set the corresponding $\overline{w}$ components to 0.5. Finally, the Lagrangian multipliers are determined by solving the associated optimization problems.

 
We proceed to evaluate the performance of the methods described above according to several metrics, including:
\begin{itemize}
    \item[(1)] Test error ($E_{t}$): formulated as
$E_{t}=\frac{1}{l_{2}} \sum_{(x, y) \in \Theta} \frac{1}{2} \lvert \operatorname{sign}\left(w^{\top} x\right)-y \rvert;$ 
\item[(2)] Cross-Validation Error (\(E_{C}\)): evaluated through the objective function in problem (\ref{OP});

\item[(3)] CPU Time (\textit{time}): computational time required by each method;

\item[(4)] Maximum Violation (\textit{Vio}): quantified as
\begin{equation}\label{Vio}
    \operatorname{Vio}(v)=\|\min\{G(v), H(v)\}\|_{\infty},
\end{equation}

used to measure the accuracy of the complementarity constraints;

\item[(5)] Problem Size (\textit{size}): number of variables involved in each method;

\item[(6)] Final Value of Parameter \(C\).

\end{itemize}
 The numerical results corresponding to these metrics are summarized in Table \ref{table2}.

\begin{table}[htbp]
\centering
\caption{The impact of ${\tau}_k$.} \label{table3}
\begin{tabular}{ccccccc}

\rowcolor{LightCyan}
Data set & Method  & $E_{t}$ ($\%$) & $E_{C}$ ($\%$) & \textit{Vio}& \textit{time}&$C$ \\
\midrule
fourclass & \texttt{GRLPN} & \textbf{21.53}&22.67 & \textbf{4.80e$-$3} & 03:24&1.6e$-$1 \\
{} &\texttt{InLP} & 22.42&\textbf{22.33}  & 1.48e$-$2 & \textbf{00:31} &1.3e$-$1 \\
\rowcolor{LightCyan}
heart  & \texttt{GRLPN} &\textbf{12.35}& \textbf{12.17} & \textbf{6.57e$-$5} &06:58&11.41\\
\rowcolor{LightCyan}
{}& \texttt{InLP}  &\textbf{12.35}&12.70 & 9.10e$-$3 &\textbf{02:52}&11.84\\
breast  & \texttt{GRLPN} & \textbf{2.26}&\textbf{2.92}  & \textbf{1.07e$-$4}& \textbf{04:35} &9.3e$-$1 \\
{}& \texttt{InLP} & \textbf{2.26}&\textbf{2.92}  & 8.70e$-$3& 07:20 &9.3e$-$1 \\
\rowcolor{LightCyan}
ionosphere & \texttt{GRLPN}&6.67 &\textbf{6.10}&\textbf{1.33e$-$6}&\textbf{08:07}&7.5e-1 \\
\rowcolor{LightCyan}
{}& \texttt{InLP} &\textbf{4.76} &\textbf{6.10}&9.50e$-$3&19:15&7.5e-1 \\
diabetes & \texttt{GRLPN}&\textbf{22.69}& 25.19  &\textbf{2.90e$-$3 }&\textbf{10:24}&5.8e$-$1 \\
{}& \texttt{InLP} &\textbf{22.69}& \textbf{24.81}  &2.30e$-$2 &12:06&8.6e$-$1 \\

\rowcolor{LightCyan}
splice & \texttt{GRLPN}&21.25&8.06  &\textbf{1.88e$-$4} & \textbf{16:07}  &1.50e$-$2\\
\rowcolor{LightCyan}
{}& \texttt{InLP} &\textbf{21.09}&\textbf{7.22}  &1.20e$-$2 & 57:27  &1.50e$-$2\\
german.number & \texttt{GRLPN} & \textbf{25.16}&16.11  & \textbf{1.44e$-$8} & \textbf{25:57}&14.17 \\
{} &\texttt{InLP} & \textbf{25.16}&\textbf{13.61}  & 1.97e$-$2  &39:19 &14.98\\
\bottomrule
\end{tabular}
\end{table}
\begin{figure}[htbp]
\centering
\subfigure[\texttt{GRLPN: fourclass}]{
\includegraphics[scale=0.23]{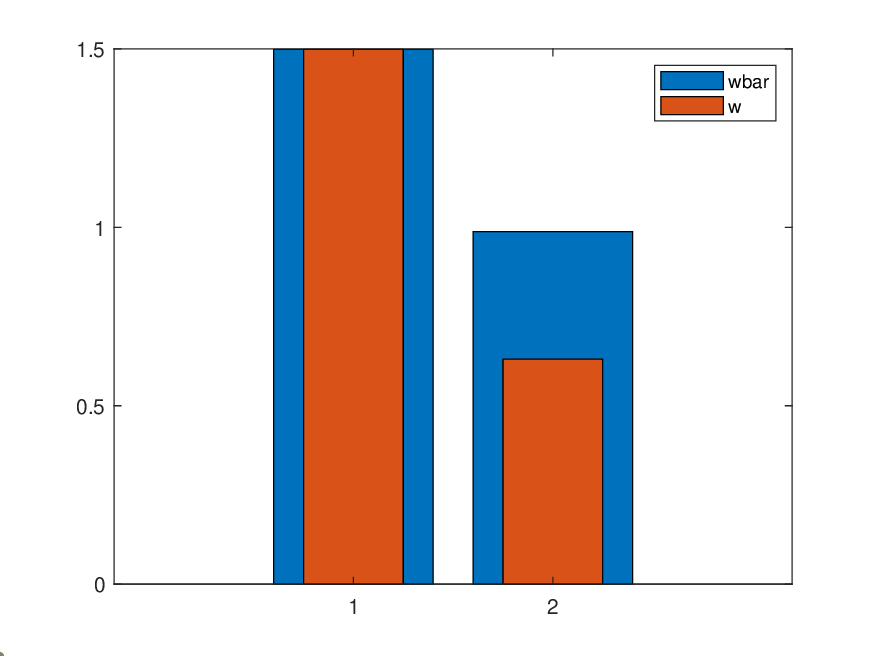} 
}
\quad
\subfigure[\texttt{GRLPN: nonskin}]{
\includegraphics[scale=0.23]{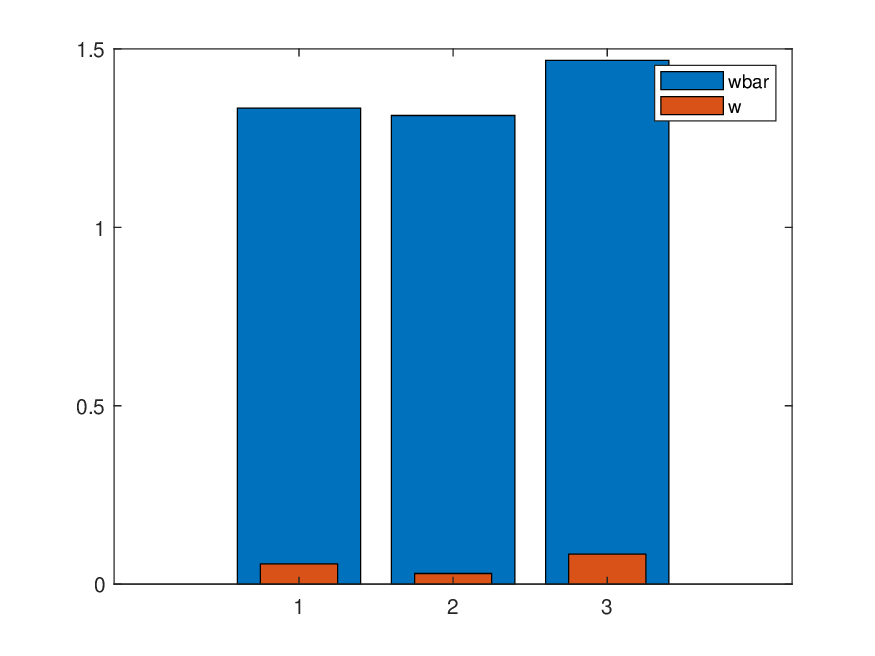}  
}
\quad
\subfigure[\texttt{GRLPN: svmguide1}]{
\includegraphics[scale=0.23]{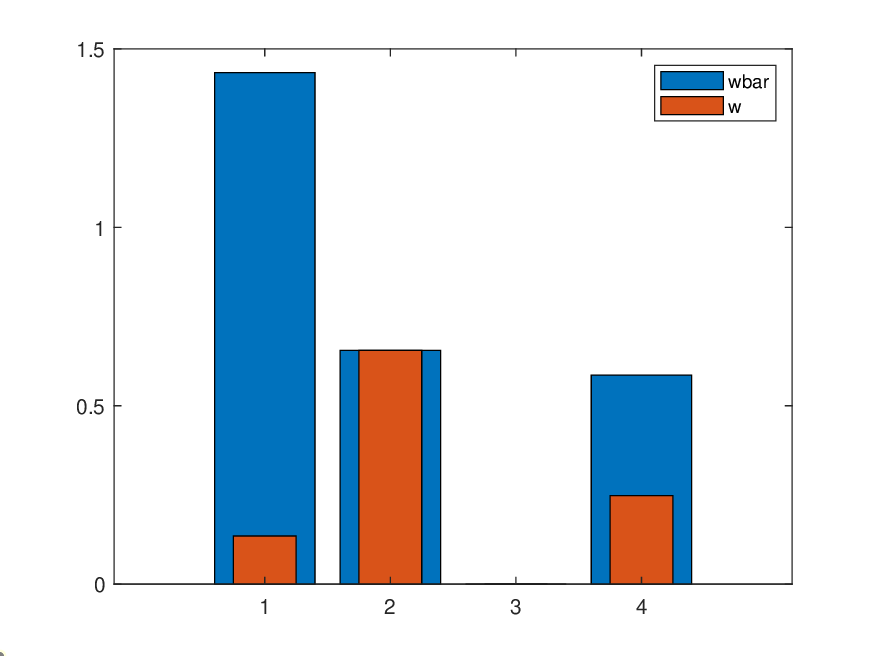}  
}
\quad
\subfigure[\texttt{GRLPN: liver}]{
\includegraphics[scale=0.23]{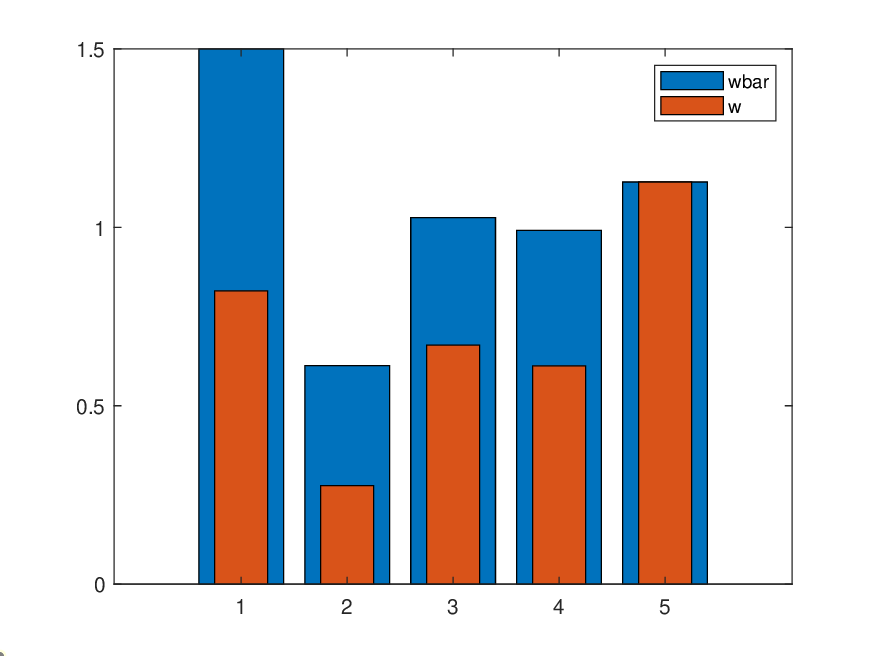} 
}
\subfigure[\texttt{GS: fourclass}]{
\includegraphics[scale=0.23]{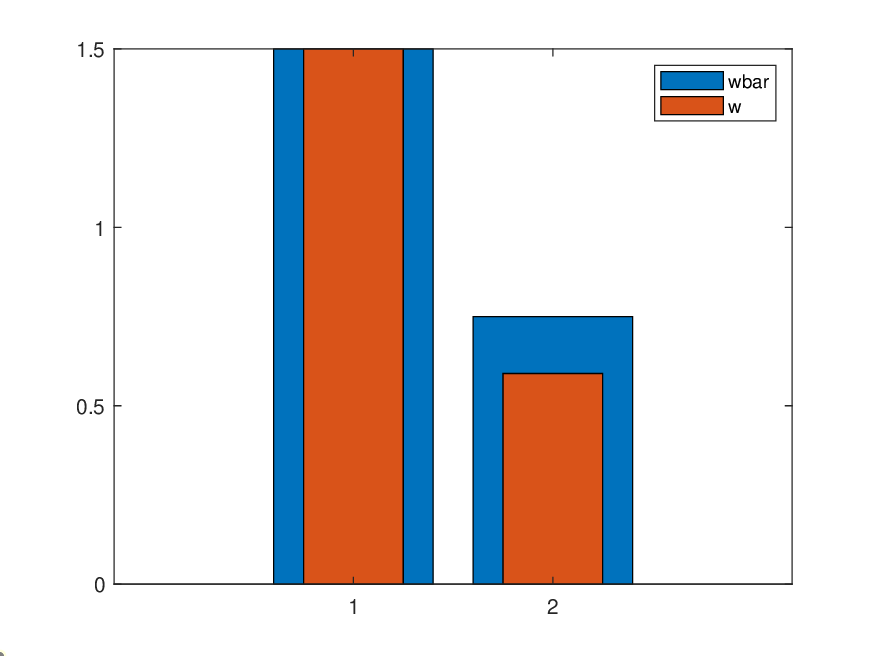} 
}
\quad
\subfigure[\texttt{GS: nonskin}]{
\includegraphics[scale=0.23]{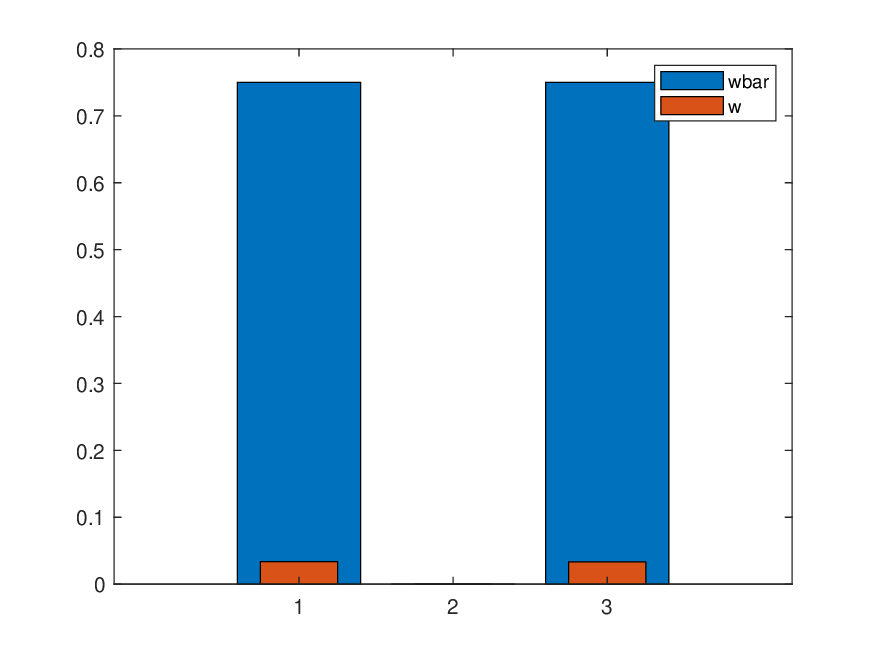}  
}
\quad
\subfigure[\texttt{GS: svmguide1}]{
\includegraphics[scale=0.23]{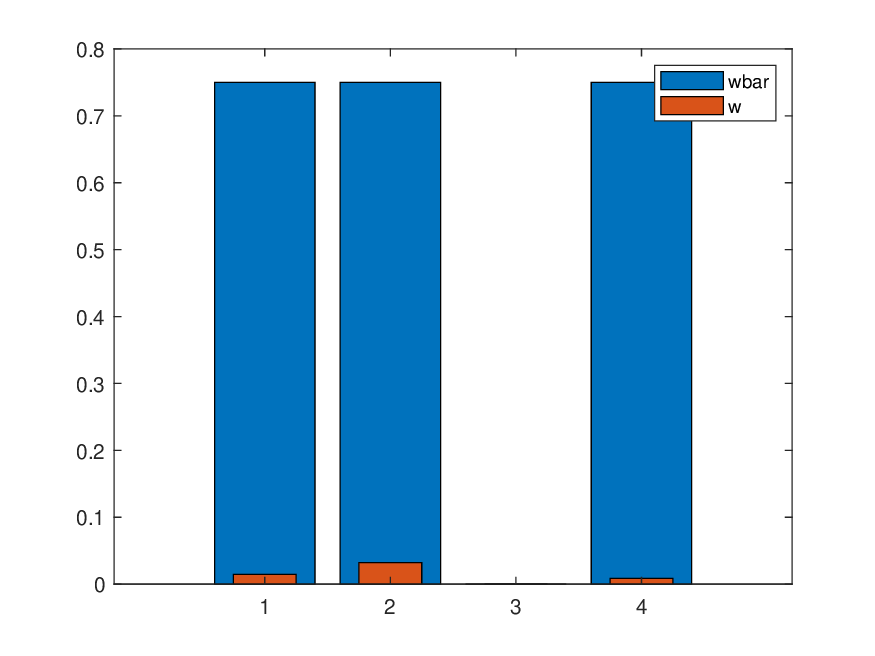}  
}
\quad
\subfigure[\texttt{GS: liver}]{
\includegraphics[scale=0.23]{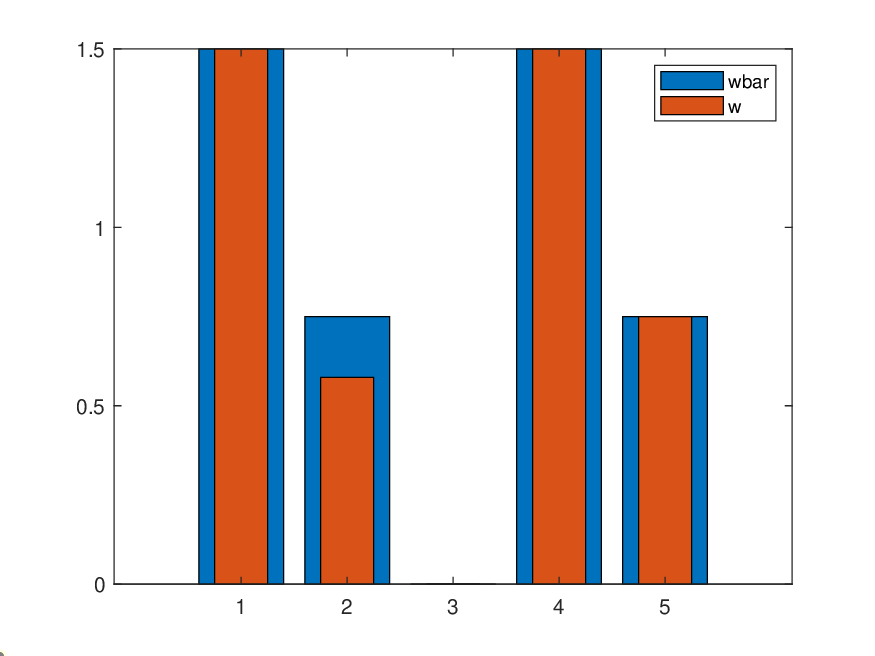} 
}

\subfigure[\texttt{GRST: fourclass}]{
\includegraphics[scale=0.23]{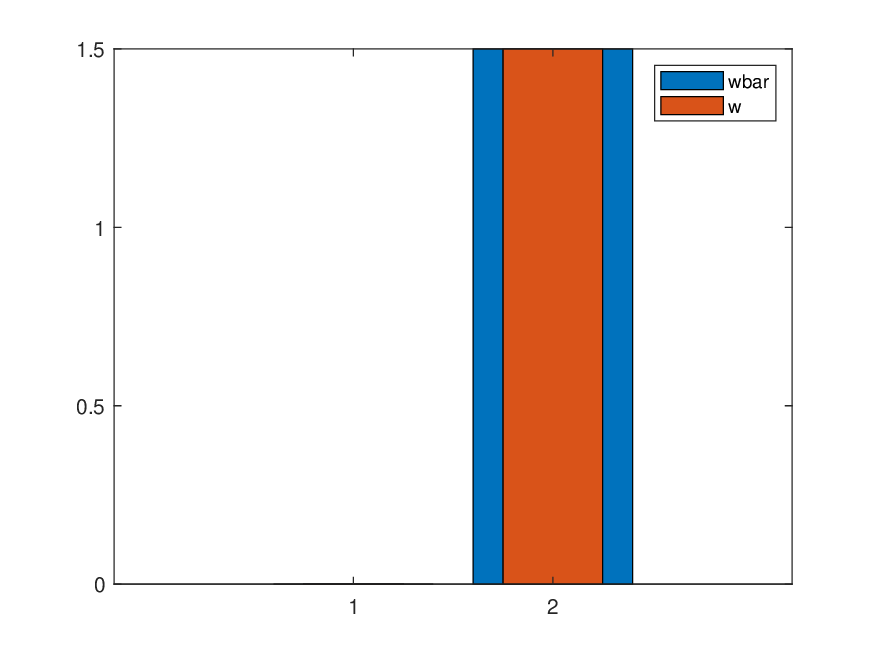} 
}
\quad
\subfigure[\texttt{GRST: nonskin}]{
\includegraphics[scale=0.23]{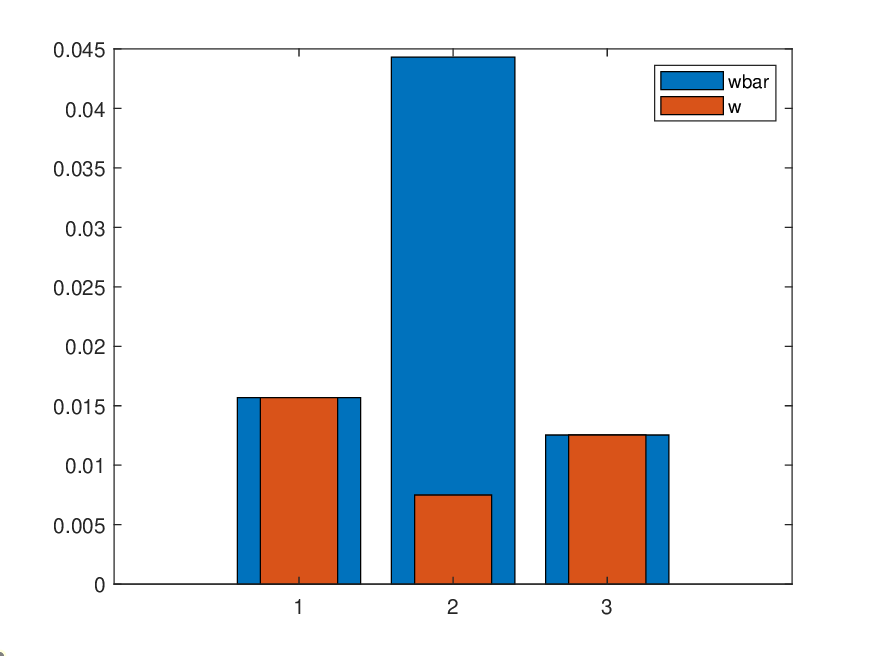}  
}
\quad
\subfigure[\texttt{GRST: svmguide1}]{
\includegraphics[scale=0.23]{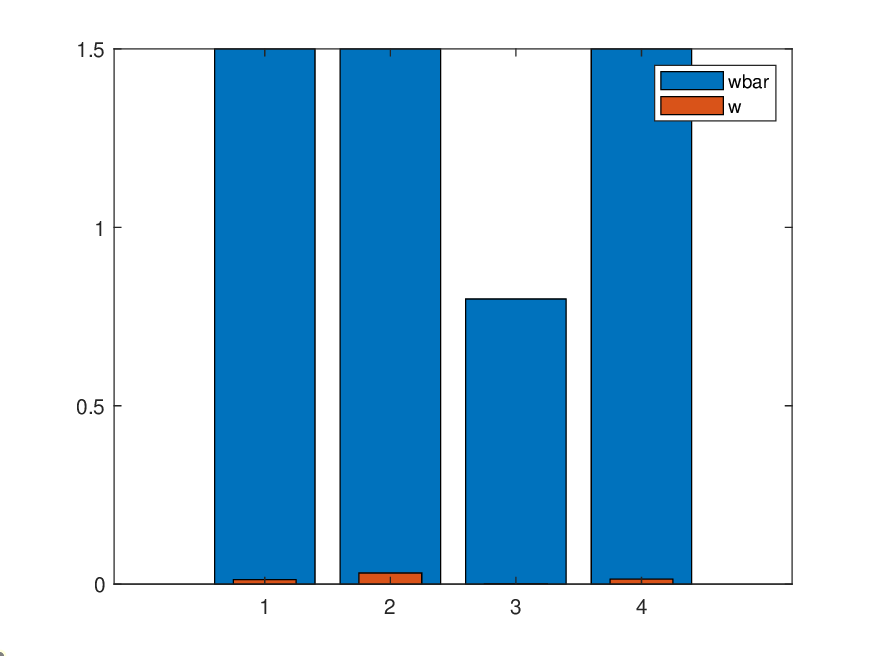}  
}
\quad
\subfigure[\texttt{GRST: liver}]{
\includegraphics[scale=0.23]{livergs.eps} 
}
\caption{Feature selection results illustrated componentwise for the datasets \texttt{fourclass}, \texttt{nonskin}, \texttt{svmguide1}, and \texttt{liver}}\label{Fig1}
\end{figure}

\begin{figure}[htbp]
\centering

\subfigure[\texttt{GRLPN: diabetes}]{
\includegraphics[scale=0.23]{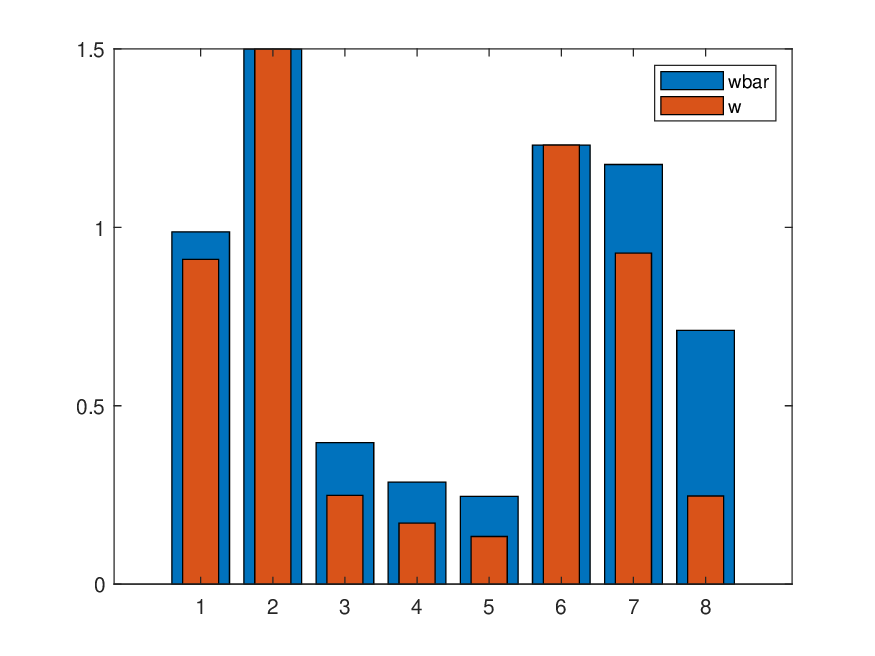}  
}
\quad
\subfigure[\texttt{GRLPN: breast}]{
\includegraphics[scale=0.23]{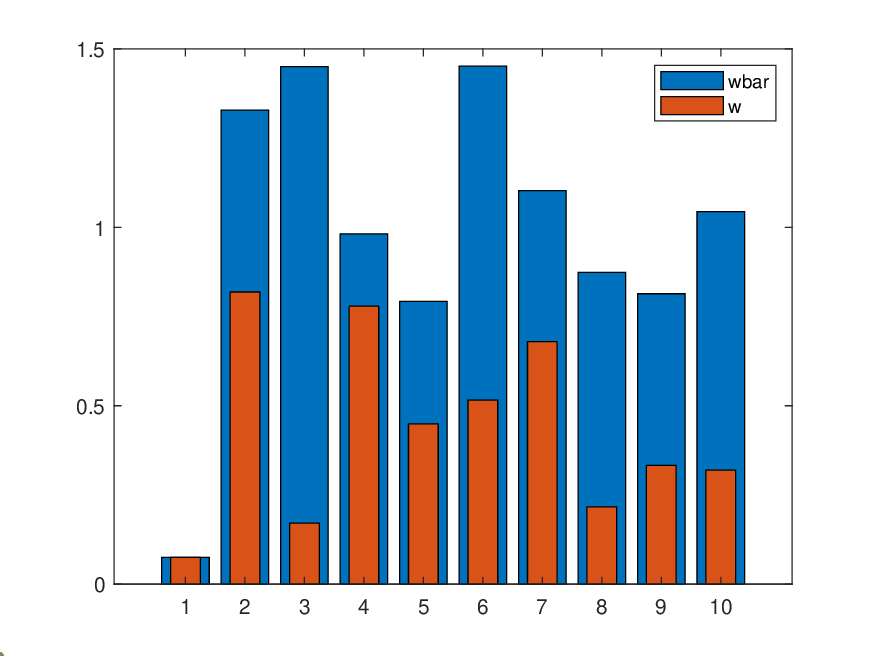}  
}
\quad
\subfigure[\texttt{GRLPN: heart}]{
\includegraphics[scale=0.23]{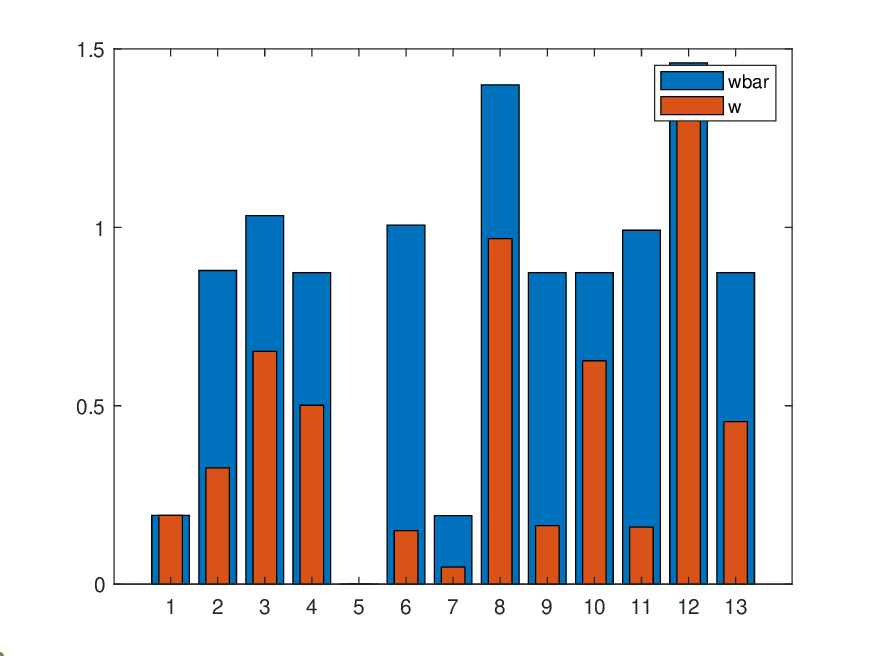} 
}
\quad
\subfigure[\texttt{GRLPN: australian}]{
\includegraphics[scale=0.23]{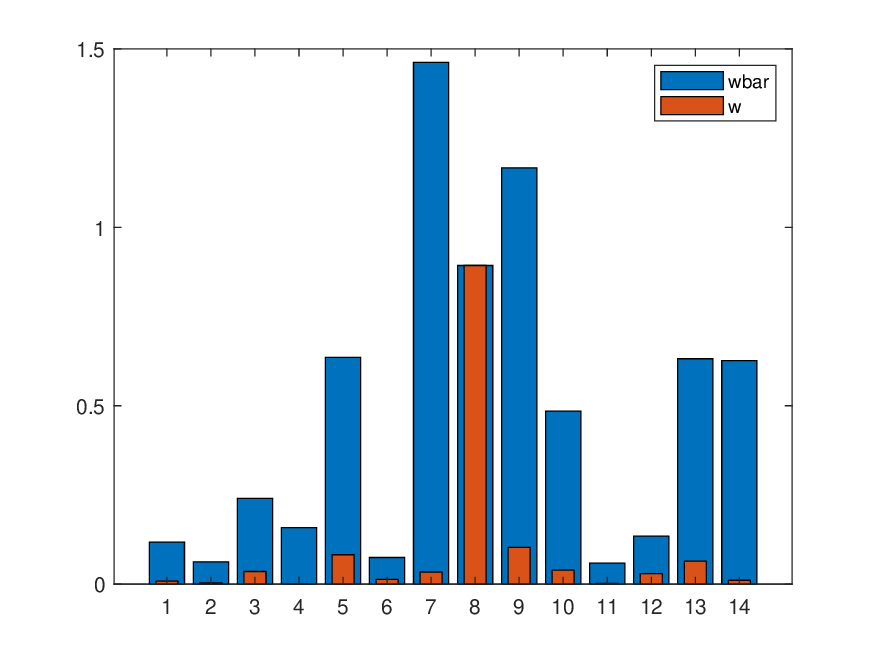}
}
\quad
\subfigure[\texttt{GS: diabetes}]{
\includegraphics[scale=0.23]{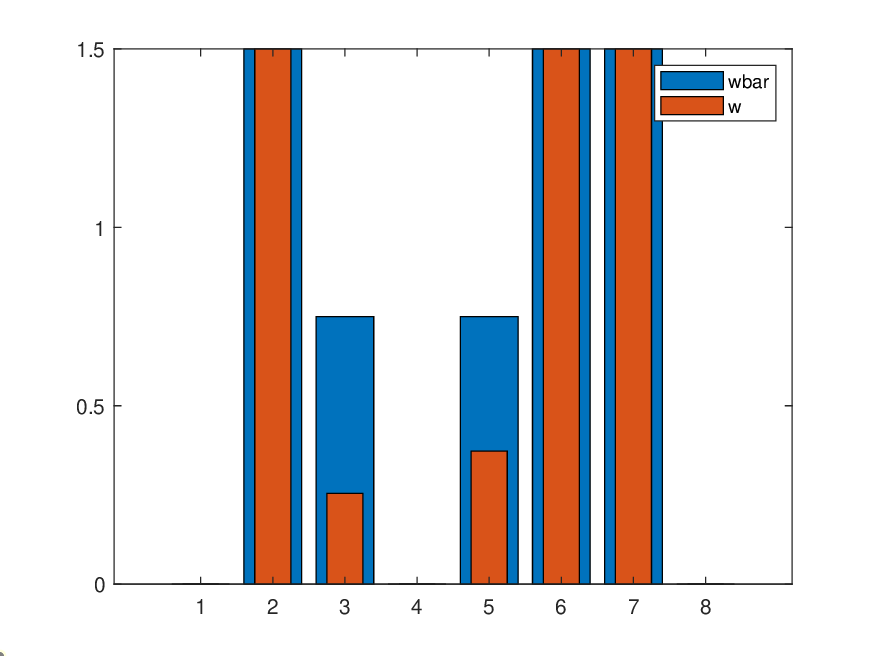}  
}
\quad
\subfigure[\texttt{GS: breast}]{
\includegraphics[scale=0.23]{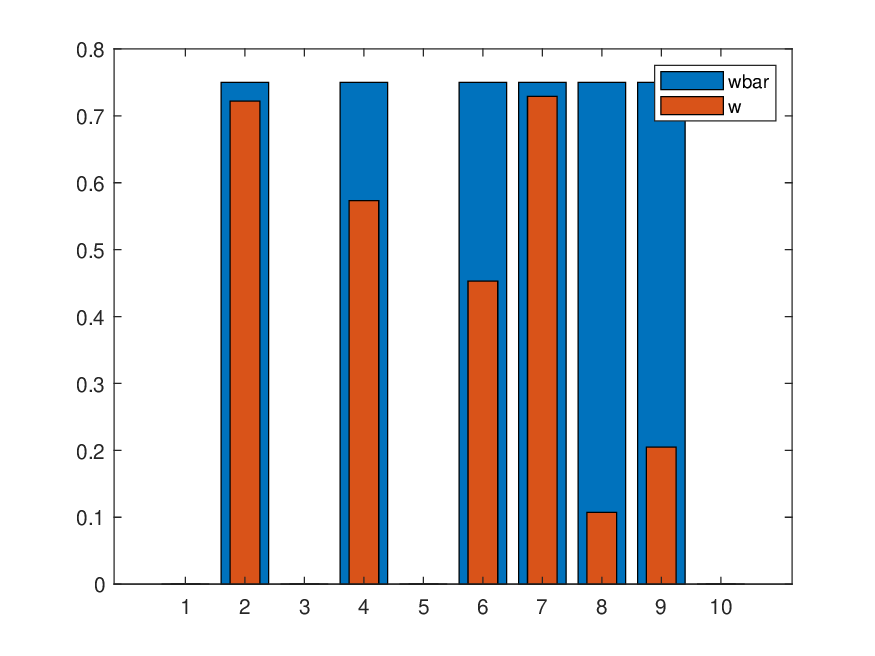}  
}
\quad
\subfigure[\texttt{GS: heart}]{
\includegraphics[scale=0.23]{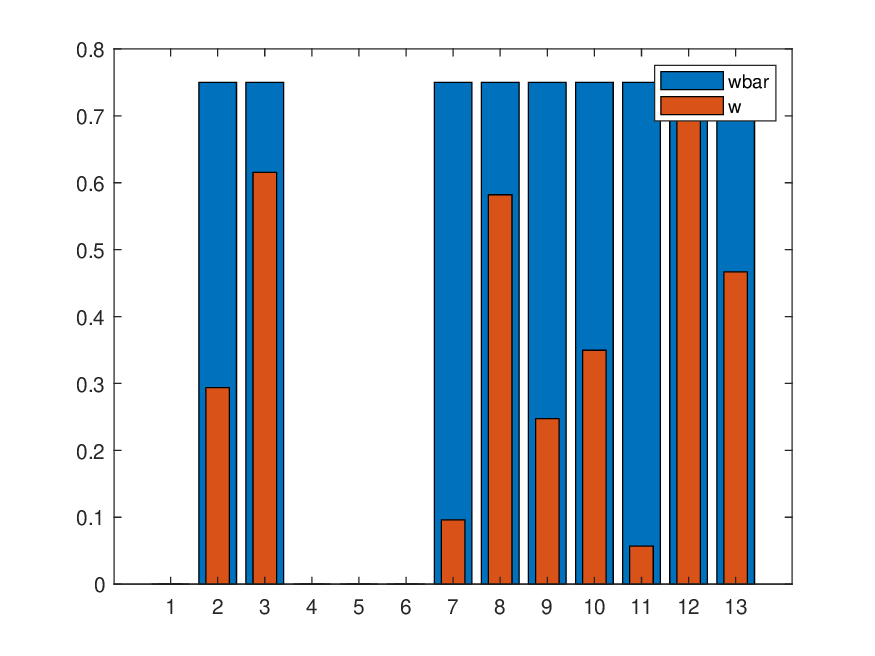} 
}
\quad
\subfigure[\texttt{GS: australian}]{
\includegraphics[scale=0.23]{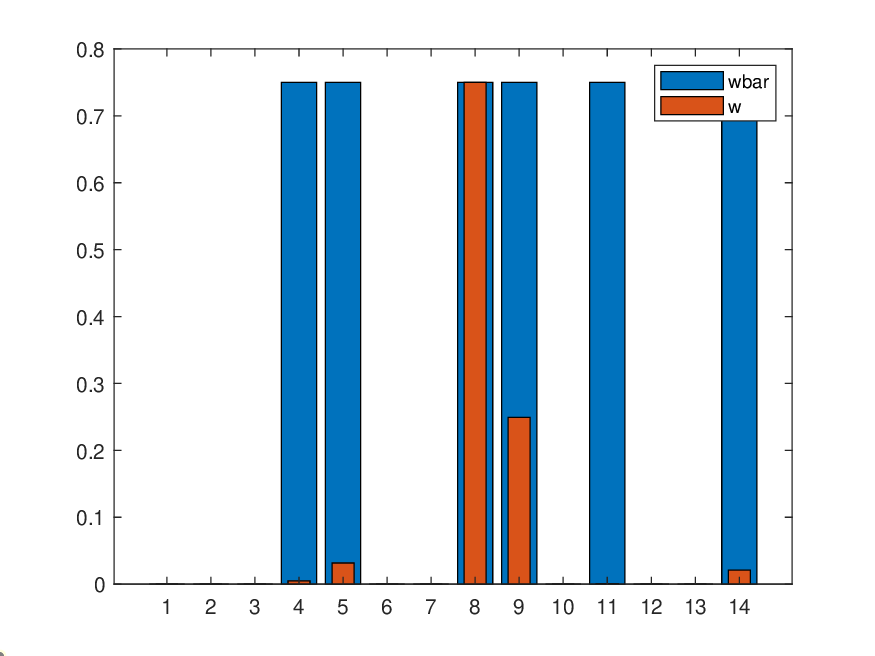} 
}
\quad
\subfigure[\texttt{GRST: diabetes}]{
\includegraphics[scale=0.23]{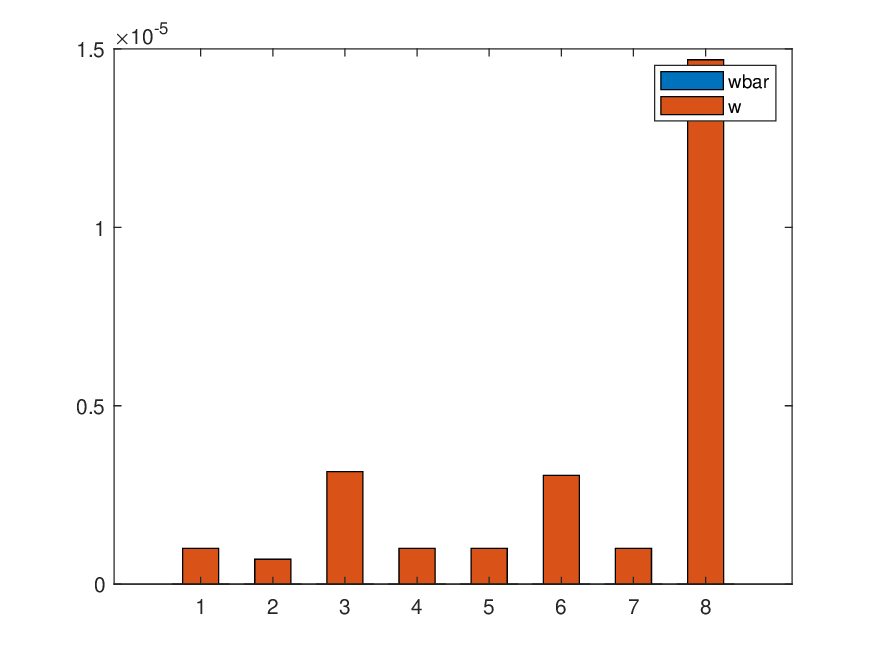}  
}
\quad
\subfigure[\texttt{GRST: breast}]{
\includegraphics[scale=0.23]{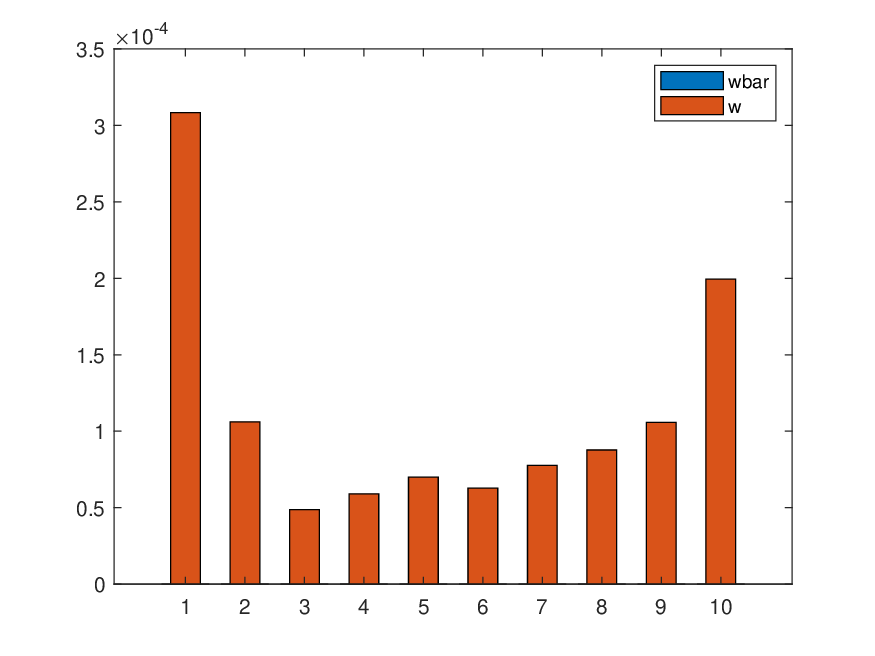}  
}
\quad
\subfigure[\texttt{GRST: heart}]{
\includegraphics[scale=0.23]{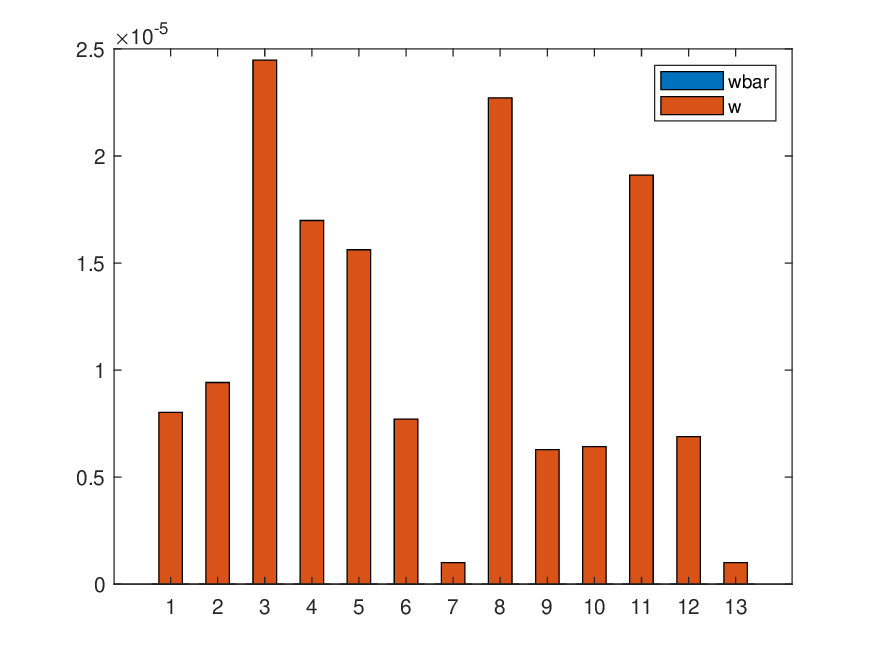} 
}
\quad
\subfigure[\texttt{GRST: australian}]{
\includegraphics[scale=0.23]{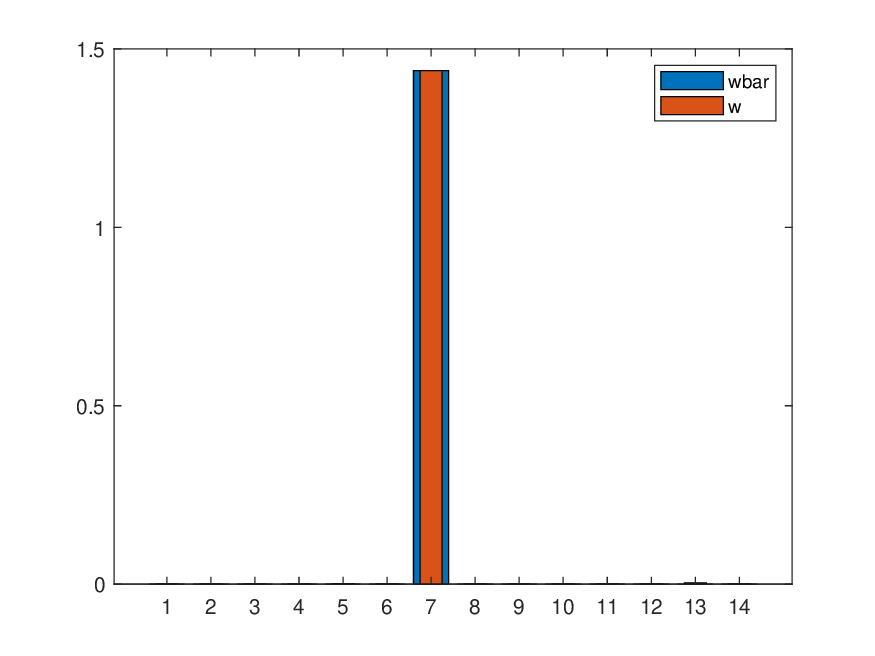} 
}
\caption{Feature selection results illustrated componentwise for the datasets \texttt{diabetes}, \texttt{breast}, \texttt{heart}, and \texttt{Australian}}\label{Fig2}
\end{figure}


In terms of computational efficiency, the \texttt{GS} method demonstrates superior performance compared to \texttt{GRLPN} for small-scale problems ($n \leq 8$). However, when addressing large-scale problems ($n > 8$), \texttt{GRLPN} significantly outperforms \texttt{GS}. Specifically, the computational complexity of \texttt{GS} grows rapidly, as it requires solving approximately $\mathcal{O}(T 3^{m_1})$ subproblems for selecting $\overline{w}$. Consequently, \texttt{GS} becomes computationally impractical as the number of features increases, and thus experiments involving large-scale datasets have been omitted for \texttt{GS}. In contrast, \texttt{GRLPN} does not suffer from this limitation. A similar pattern emerges in comparing computational time between \texttt{GRLPN} and \texttt{GRST}: for larger datasets (e.g., \texttt{splice}, \texttt{a1a}, and \texttt{w1a}), \texttt{GRLPN} consistently achieves significantly faster performance. With respect to the test error, \texttt{GRLPN} attains results superior or comparable to \texttt{GS}, and clearly outperforms \texttt{GRST}. Additionally, in terms of cross-validation error, \texttt{GRLPN} remains highly competitive against \texttt{GS} and considerably surpasses the performance obtained by \texttt{GRST}.


Furthermore, Figures~\ref{Fig1} and~\ref{Fig2} illustrate the results of the feature selection procedure, where the blue bars denote the value of $\overline{w}_i,\ i\in[n]$ and the red bars represent the value of $w_i,\ i\in[n]$ on the test datasets. Each bar's magnitude corresponds to the importance assigned to the respective feature. A red bar truncated by a blue bar indicates that the associated feature has been effectively removed by the selection mechanism. For \texttt{GRLPN}, features corresponding to values of $\overline{w}$ below the threshold $\sqrt{{\tau}_k}$ are deemed irrelevant and subsequently set to zero, after which the test error is evaluated. From the figures, it is evident that \texttt{GS} tends to yield sparser solutions, primarily due to its exhaustive search strategy, which aggressively explores feature combinations in pursuit of an optimal subset. Nevertheless, \texttt{GRLPN} achieves a more favorable balance between sparsity and generalization performance. Additionally, it is worth noting that the final values of $C$ and $\overline{w}$ generated by SNOPT are frequently obtained near the lower bounds and are highly sensitive to the variations in the lower bounds.


Next, to demonstrate the influence of the parameter ${\tau}_k$, we fix it directly at a small constant value (${\tau}_k = 1.0\times 10^{-4}$) and solve problem (\ref{NLP}) only once. The resulting approach is denoted by \texttt{InLP}. The experimental settings for \texttt{GRLPN} remain unchanged from previous experiments. The numerical results for this comparison are summarized in Table~\ref{table3}. As can be observed from the table, \texttt{GRLPN} generally requires less computational time and achieves significantly better (smaller) maximum violation values compared to the \texttt{InLP} method.

\section{Conclusions}\label{sec5}
In this paper, we have developed a global relaxation-based LP-Newton (GRLPN) method for simultaneously addressing multiple hyperparameter selection in SVC. We have established that the MPEC-MFCQ property is preserved within our formulation and provided theoretical convergence guarantees for the proposed GRLPN method. Numerical experiments illustrate the efficiency and practical advantages of GRLPN, particularly for datasets involving a larger number of features.

\section*{Declarations}
\subsection*{Ethics approval}
Not applicable.
\subsection*{Consent to participate}
All the authors have consented to participate in this research.

\subsection*{Consent for publication}
All the authors have given their consent for the paper to be submitted for publication.

\subsection*{Availability of data and material}
The codes and data use for the experiments can be made available if requested.  

\subsection*{Funding and acknowledgements}
The work of QL's research is supported by the National Science Foundation of China (NSFC) 12071032, 12271526. AZ's work is partly funded by the EPSRC grants with references EP/V049038/1 and EP/X040909/1.

\subsection*{Author Contribution declaration}
Yaru Qian conceptualized the study, developed the methodology, conducted theoretical analysis, performed numerical experiments, and drafted the manuscript. Qingna Li contributed to the conceptualization, supervised the theoretical analysis, and revised the manuscript. Alain Zemkoho supervised numerical experiments, provided critical feedback, and reviewed the manuscript. All authors discussed results and approved the final manuscript.

\subsection*{Competing Interests}
The authors declare that they have no competing interests.

\bibliography{paperla}
\begin{appendices}
\section{Proof of Theorem \ref{thr1}}\label{appendixa}
Before presenting the proof, we first introduce some essential notation.

We use the following index sets to represent the validation and training points in K-fold cross validation:
\begin{equation*}
    Q_u:=\left\{i\mid i=1, \cdots, K m_1\right\},\quad Q_l:=\left\{i\mid i=1, \cdots, K m_2\right\}.
\end{equation*}
For the bound constrains in the lower level problems, we define the subsequent index set:
\begin{equation*}
    Q_b:=\left\{i\mid i=1, \cdots, K n\right\}.
\end{equation*}
To prove Theorem \ref{thr1}, we need the further definitions for index sets as follows:
\begin{subequations} \label{eq23}
\begin{align*}
& I_{H_{1}}\ \; := \; \left\{i \in Q_{u} \ \mid \ \zeta_{i}=0,\ \left(A B^{\top}\alpha+A\beta-A\gamma +z\right)_{i}>0\right\},  \\
& I_{H_{2}} \ \; :=\; \left\{i \in Q_{u} \ \mid \ z_{i}=0,\ 1-\zeta_{i}>0\right\},  \\
& I_{H_{3}} \ \; := \; \left\{i \in Q_{l} \ \mid \ \alpha_{i}=0,\ \left(B B^{\top}\alpha+B\beta-B\gamma -\mathbf{1}+\xi\right)_{i}>0\right\}, \\
& I_{H_{4}} \ \; := \; \left\{i \in Q_{l} \ \mid \ \xi_{i}=0,\ C-\alpha_{i}>0\right\}, \\
& I_{H_{5}} \ \; := \; \left\{i \in Q_b \ \mid \ \beta_{i}=0,\  \left(B^{\top}\alpha+\beta-\gamma\right)_i>-\overline{w}_{(i\ mod \ K)}\right\}, \\
& I_{H_{6}} \ \; := \; \left\{i \in Q_b \ \mid \ \gamma_{i}=0,\  \left(B^{\top}\alpha+\beta-\gamma\right)_i<\overline{w}_{(i\ mod \ K)}\right\}, \\
& I_{G_{1}} \ \; :=\; \left\{i \in Q_{u} \ \mid \ \zeta_{i}>0,\ \left(A B^{\top}\alpha+A\beta-A\gamma +z\right)_{i}=0\right\},\\
& I_{G_{2}} \ \; :=\; \left\{i \in Q_{u} \ \mid \ z_{i}>0,\ 1-\zeta_{i}=0\right\},\\
& I_{G_{3}} \ \; :=\; \left\{i \in Q_{l} \ \mid \ \alpha_{i}>0,\ \left(B B^{\top}\alpha+B\beta-B\gamma -\mathbf{1}+\xi\right)_{i}=0\right\},\\
& I_{G_{4}}\ \; :=\; \left\{i \in Q_{l} \ \mid \ \xi_{i}>0,\ C-\alpha_{i}=0\right\}, \\
& I_{G_{5}}\ \; :=\; \left\{i \in Q_b \ \mid \ \beta_{i}>0,\  \left(B^{\top}\alpha+\beta-\gamma\right)_i=-\overline{w}_{(i\ mod \ K)}\right\}, \\
& I_{G_{6}}\ \; :=\; \left\{i \in Q_b \ \mid \ \gamma_{i}>0,\  \left(B^{\top}\alpha+\beta-\gamma\right)_i=\overline{w}_{(i\ mod \ K)}\right\}, \\
& I_{GH_{1}} :=\; \left\{i \in Q_{u} \ \mid \ \zeta_{i}=0,\ \left(A B^{\top}\alpha+A\beta-A\gamma +z\right)_{i}=0\right\},\\
& I_{GH_{2}} :=\; \left\{i \in Q_{u} \ \mid \ z_{i}=0,\ 1-\zeta_{i}=0\right\}, \\
& I_{GH_{3}} :=\; \left\{i \in Q_{l} \ \mid \ \alpha_{i}=0,\ \left(B B^{\top}\alpha+B\beta-B\gamma -\mathbf{1}+\xi\right)_{i}=0\right\}, \\
& I_{GH_{4}} :=\; \left\{i \in Q_{l} \ \mid \ \xi_{i}=0,\ C-\alpha_{i}=0\right\},\\
& I_{GH_{5}}\ \; :=\; \left\{i \in Q_b \ \mid \ \beta_{i}=0,\  \left(B^{\top}\alpha+\beta-\gamma\right)_i=-\overline{w}_{(i\ mod \ K)}\right\}, \\
& I_{GH_{6}}\ \; :=\; \left\{i \in Q_b \ \mid \ \gamma_{i}=0,\  \left(B^{\top}\alpha+\beta-\gamma\right)_i=\overline{w}_{(i\ mod \ K)}\right\},\\
&I_{g_{1}}\ \; :=\; \left\{ 1\mid \text{if}\ C=C_{lb} \right\},\\
&I_{g_{2}}\ \; :=\; \left\{ 1\mid \text{if}\
 C=C_{ub}\right\},\\
&I_{g_{3}}\ \; :=\; \left\{ i\in[n]\mid \overline{w}_i=\left(\overline{w}_{lb}\right)_i \right\},\\
&I_{g_{4}}\ \; :=\; \left\{ i\in[n]\mid 
 \overline{w}_i=\left(\overline{w}_{ub}\right)_i\right\}.
\end{align*}
\end{subequations}
 It holds that $I_{H}=\underset{k=1} {\overset{6}{\cup}}I_{H_{k}},\ I_{G}=\underset{k=1} {\overset{6}{\cup}}I_{G_{k}}$, $I_{GH}=\underset{k=1} {\overset{6}{\cup}}I_{GH_{k}}$, and $I_{g}=\underset{k=1} {\overset{4}{\cup}}I_{g_{k}}$.

     We decompose the complementarity constraints in (\ref{MPEC}) into the following index sets:
\begin{eqnarray*}
\Psi_{1}&:=&\left\{i \in Q_{u}\ \mid \ 0\leq \zeta_{i}<1,\ \left( A  B^{\top}\alpha+A\beta-A\gamma+z\right)_{i}=0,\ z_{i}=0\right\}, \\
\Psi_{2}&:=&\left\{i \in Q_{u}\ \mid \ \zeta_{i}=0,\ \left(A  B^{\top}\alpha+A\beta-A\gamma+z\right)_{i}>0,\ z_{i}=0\right\}, \\
\Psi_{3}&:=&\left\{i \in Q_{u}\ \mid \ \zeta_{i}=1,\ \left(A  B^{\top}\alpha+A\beta-A\gamma+z\right)_{i}=0,\ z_{i}>0\right\},\\
\Lambda_{1}&:=&\left\{i \in Q_{l}\ \mid \ \alpha_{i}=0,\ \left(B  B^{\top}\alpha+B\beta-B\gamma-\mathbf{1}+\xi\right)_{i}=0,\ \xi_{i}=0\right\}, \\
\Lambda_{2}&:=&\left\{i \in Q_{l}\ \mid \ \alpha_{i}=0,\ \left(B  B^{\top}\alpha+B\beta-B\gamma -\mathbf{1}+\xi\right)_{i}>0,\ \xi_{i}=0\right\}, \\
\Lambda_{3}&:=&\left\{i \in Q_{l}\ \mid \ 0< \alpha_{i} \leq C,\ \left(B  B^{\top}\alpha+B\beta-B\gamma-\mathbf{1}+\xi\right)_{i}=0,\ \xi_{i}=0\right\},\\
\Lambda_{u}&:=&\left\{i \in Q_{l}\ \mid \ \alpha_{i}=C,\ \left(B B^{\top}\alpha+B\beta-B\gamma-\mathbf{1}+\xi\right)_{i}=0,\ \xi_{i}>0\right\},\\
\Pi_{1}&:=&\left\{ i\in Q_b\mid-\overline{w}_{(i\ mod \ K)}= \left(B^{\top}\alpha+\beta-\gamma\right)_i<\overline{w}_{(i\ mod \ K)},\beta_i\geq 0,\gamma_i=0  \right\},\\
\Pi_{2}&:=&\left\{ i\in Q_b\mid-\overline{w}_{(i\ mod \ K)}< \left(B^{\top}\alpha+\beta-\gamma\right)_i<\overline{w}_{(i\ mod \ K)},\beta_i= 0,\gamma_i=0 \right\},\\
\Pi_{3}&:=&\left\{ i\in Q_b\mid-\overline{w}_{(i\ mod \ K)}< \left(B^{\top}\alpha+\beta-\gamma\right)_i=\overline{w}_{(i\ mod \ K)},\beta_i=0,\gamma_i\geq0 \right\}.
\end{eqnarray*}
Here $\overline{w}_{(i\ mod \ K)}$ represents the component of $\overline{w}$ corresponding to the index $i\ mod \ K$.\\
Subsequently, the index sets $\Psi_{1},\ \Pi_{1},\ \Pi_{3},\ \Lambda_{3}$ are further partitioned into the following subsets: 
\begin{eqnarray*}
\Psi^{0}_{1} &:=& \left\{i \in Q_{u}\ \mid \ \zeta_{i}=0,\ \left(A B^{\top}\alpha+A\beta-A\gamma+z\right)_{i}=0,\ z_{i}=0\right\}, \\
\Psi^{+}_{1}&:=& \left\{i \in Q_{u}\ \mid \ 0<\zeta_{i}<1,\ \left(A B^{\top}\alpha+A\beta-A\gamma+z\right)_{i}=0,\ z_{i}=0\right\},\\
 \Pi_{1}^0&:=&\left\{ i\in Q_b\mid-\overline{w}_{(i\ mod \ K)}= \left(B^{\top}\alpha+\beta-\gamma\right)_i<\overline{w}_{(i\ mod \ K)},\beta_i=0,\gamma_i=0  \right\},\\
\Pi_{1}^+&:=&\left\{ i\in Q_b\mid-\overline{w}_{(i\ mod \ K)}= \left(B^{\top}\alpha+\beta-\gamma\right)_i<\overline{w}_{(i\ mod \ K)},\beta_i> 0,\gamma_i=0  \right\},\\
\Pi_{3}^0&:=&\left\{ i\in Q_b\mid-\overline{w}_{(i\ mod \ K)}< \left(B^{\top}\alpha+\beta-\gamma\right)_i=\overline{w}_{(i\ mod \ K)},\beta_i=0,\gamma_i=0 \right\},\\
\Pi_{3}^+&:=&\left\{ i\in Q_b\mid-\overline{w}_{\left(i\ mod \ K\right)}< \left(B^{\top}\alpha+\beta-\gamma\right)_i=\overline{w}_{(i\ mod \ K)},\beta_i=0,\gamma_i>0 \right\},\\
  \Lambda^{+}_{3} &:=&\left\{i \in Q_{l}\ \mid \ 0<\alpha_{i}<C,\ \left(B  B^{\top}\alpha+B\beta-B\gamma-\mathbf{1}+\xi\right)_{i}=0,\ \xi_{i}=0\right\},  \\
 \Lambda^{c}_{3} &:=& \left\{i \in Q_{l}\ \mid \ \alpha_{i}=C,\ \left(B  B^{\top}\alpha+B\beta-B\gamma-\mathbf{1}+\xi\right)_{i}=0,\ \xi_{i}=0\right\}.
\end{eqnarray*}

Let
$ \setlength{\abovedisplayskip}{1.5pt} I^{k}:=I_{H_{k}}\cup I_{G_{k}} \cup I_{GH_{k}},\ k=1,\cdots, 6.
\setlength{\belowdisplayskip}{1.5pt} $
Each index set $I^{k}$, for $k=1,\cdots, 6$, corresponds to the union of the three partition components associated with the respective complementarity conditions in system~(\ref{MPEC}). Specifically, we have:

\begin{itemize}
\item [{}]  Part 1: $I^{1}$ for the system $\mathbf{0}\leq \zeta\perp AB^{\top}\alpha+A\beta-A\gamma+z\geq \mathbf{0}$;
\item [{}]  Part 2: $I^{2}$ for the system  $ \mathbf{0}\leq z\perp \mathbf{1}-\zeta\geq \mathbf{0}$;
\item [{}]  Part 3: $I^{3}$ for the system $\mathbf{0}\leq \alpha\perp BB^{\top}\alpha+B\beta-B\gamma-\mathbf{1}+\xi \geq\mathbf{0}$;
\item [{}]  Part 4: $I^{4}$ for the system $ \mathbf{0}\leq \xi\perp \mathbf{1}C-\alpha \geq\mathbf{0}$;
\item [{}]  Part 5: $I^{5}$ for the system $\mathbf{0}\leq \beta \perp B^{\top}\alpha+\beta-\gamma+E_K^n\overline{w} \geq\mathbf{0}$;
\item [{}]  Part 6: $I^{6}$ for the system $ \mathbf{0}\leq \gamma\perp -B^{\top}\alpha-\beta+\gamma+E_K^n\overline{w} \geq\mathbf{0}$.
\end{itemize}

The following relationships immediately follow: $I^{1}=I^{2}=Q_{u}$, $I^{3}=I^{4}=Q_{l}$ and $I^{5}=I^{6}=Q_b$.

To facilitate a comprehensive characterization of the set of gradient vectors in~(\ref{MPEC}), we introduce the following result.
\begin{proposition}\label{prop1}
The index sets have the following relationship:
\begin{itemize}
\item [{\rm (a)}] In Part $1,\ I_{H_{1}}=\Psi_{2},\ I_{G_{1}}=\Psi_{3},\ I_{GH_{1}}=\emptyset$;
\item [{\rm (b)}] In Part $2,\ I_{H_{2}}= \Psi_{2},\ I_{G_{2}}=\Psi_{3},\ I_{GH_{2}}=\emptyset$;
\item [{\rm (c)}] In Part $3,\ I_{H_{3}}=\Lambda_{2},\ I_{G_{3}}=\Lambda_{3} \cup \Lambda_{u},\ I_{GH_{3}}=\Lambda_{1}$;
\item [{\rm (d)}] In Part $4,\ I_{H_{4}}=\Lambda_{1} \cup \Lambda_{2} \cup \Lambda^{+}_{3},\ I_{G_{4}}=\Lambda_{u},\ I_{GH_{4}}=\Lambda^{c}_{3}$;
 \item [{\rm (e)}] In Part $5,\ I_{H_{5}}=\Pi_2\cup\Pi_3,\ I_{G_{5}}=\Pi_1^+,\ I_{GH_{5}}=\Pi_1^0$;
        \item [{\rm (f)}] In Part $6,\ I_{H_{6}}=\Pi_1\cup\Pi_2,\ I_{G_{6}}=\Pi_3^+,\ I_{GH_{6}}=\Pi_3^0.$
\end{itemize}
\end{proposition}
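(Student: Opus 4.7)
The proposition reduces, for each of the six complementarity blocks, to a careful case analysis that exploits the coupling between two adjacent blocks (Parts 1 and 2 share $\zeta$ and $z$; Parts 3 and 4 share $\alpha$ and $\xi$; Parts 5 and 6 share $\beta$ and $\gamma$ through $\pm\overline{w}$). My plan is to handle the three pairs independently, in each case translating the conditions in the definitions of $I_{H_k}$, $I_{G_k}$, $I_{GH_k}$ into the three coordinates that appear in the corresponding $\Psi$, $\Lambda$, or $\Pi$ sets, and then verifying both inclusions by bookkeeping.

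For parts~(a)--(b), I would fix $i\in Q_u$. From Part~2, $z_i(1-\zeta_i)=0$ together with the assumption $x_i^{\top}w^t\neq 0$ (made after \eqref{UP}) forces $\zeta_i\in\{0,1\}$ with $z_i=0$ when $\zeta_i=0$ and $z_i>0$ when $\zeta_i=1$. This immediately collapses $I_{H_1}$ into $\Psi_2$, $I_{G_1}$ into $\Psi_3$, and empties $I_{GH_1}$; the analogous translation for the $z$-side yields (b). The non-degeneracy assumption is crucial here, since without it we would also pick up $\Psi_1^{0}$ and $\Psi_1^{+}$.

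For parts~(c)--(d), fix $i\in Q_l$ and combine Part~3 with the slackness $\xi_i(C-\alpha_i)=0$ from Part~4. I would split on whether $\alpha_i=0$, $0<\alpha_i<C$, or $\alpha_i=C$ and, in each branch, read off the value of $(BB^{\top}\alpha+B\beta-B\gamma-\mathbf{1}+\xi)_i$ and $\xi_i$ from the complementarity conditions. This assigns every $i$ to exactly one of $\Lambda_1,\Lambda_2,\Lambda_3^+,\Lambda_3^c,\Lambda_u$, after which matching with $I_{H_3},I_{G_3},I_{GH_3}$ and $I_{H_4},I_{G_4},I_{GH_4}$ is a direct check, using $\Lambda_3=\Lambda_3^+\cup\Lambda_3^c$. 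Parts~(e)--(f) follow the same template but with the paired constraints $(B^{\top}\alpha+\beta-\gamma)_i\in[-\overline{w}_{(i\bmod K)},\overline{w}_{(i\bmod K)}]$. For each $i\in Q_b$ I would split into the cases $(B^{\top}\alpha+\beta-\gamma)_i\in\{-\overline{w},(-\overline{w},\overline{w}),\overline{w}\}$ and use the Part~6 complementarity $\gamma_i(-(B^{\top}\alpha+\beta-\gamma)_i+\overline{w})=0$ to conclude that when $(B^{\top}\alpha+\beta-\gamma)_i=-\overline{w}$ the factor $2\overline{w}_{(i\bmod K)}>0$ (which holds since $\overline{w}\geq\overline{w}_{lb}>0$) forces $\gamma_i=0$, and symmetrically for Part~5 at the upper boundary.

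The step I expect to be most delicate is the emptiness of $I_{GH_1}$ and $I_{GH_2}$, since it is the only place where a hypothesis from the body of the paper (the non-boundary classification assumption after \eqref{UP}) is invoked rather than being deduced from the complementarity system itself; checking carefully that this assumption indeed rules out the biactive case for the upper-level complementarity pair is the crux of (a) and (b). Everything else is a routine but lengthy enumeration of cases that I would organize into a table indexed by the blocks $k=1,\dots,6$.
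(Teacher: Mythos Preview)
Your proposal is correct and follows essentially the same route as the paper: a case analysis driven by the coupled complementarity pairs, with the non-degeneracy assumption $x_i^\top w^t\neq 0$ used precisely to empty $\Psi_1$ (and hence $I_{GH_1}$, $I_{GH_2}$). The paper's own proof is much terser---it only spells out that the assumption forces $\Psi_1^0=\Psi_1^+=\emptyset$ and then defers the remaining bookkeeping for parts (c)--(f) to the reference \cite{li2022bilevel}---whereas you outline the full enumeration self-containedly; your identification of the $I_{GH_1}=I_{GH_2}=\emptyset$ step as the only non-mechanical point matches the paper exactly.
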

\begin{proof}    
Throughout this paper, we have established the assumption that for any validation data point $x_i$, the condition $x_i^{\top} w_i^t \neq 0$ holds for each $t = 1, \cdots, K$. Consequently, this assumption implies that the set $\left\{i \in Q_{u} \mid \left(A B^{\top}\alpha+A\beta-A\gamma+z\right)_{i} = 0\right\}$ must be empty. From this, it follows that both $\Psi_1^0$ and $\Psi_1^+$ are empty sets. Combining these observations with the findings in~\cite{li2022bilevel} completes the proof. 
\end{proof}
Now with Proposition \ref{prop1}, we are ready to characterize the set of gradient vectors in (\ref{grad1}) and (\ref{grad2}).

\begin{proposition}\label{pro5}
 The set of gradient vectors in \eqref{grad1} and \eqref{grad2} at a feasible point $v$ for the MPEC in \eqref{MPEC} can be written as the row vectors in the following matrix \rm{(denoted as $\Gamma$)}:
 
\begin{equation*}
\Gamma:=\left[
\small
\begin{array}{cccccccc}
\centering{\Gamma_{a}^{1}}&\mathbf{0}_{(I_{g_{1}},\ L_{2})}&\mathbf{0}_{(I_{g_{c}},\ L_{3})}&\mathbf{0}_{(I_{g_{c}},\ L_{4})}&\mathbf{0}_{(I_{g_{c}},\ L_{5})}&\mathbf{0}_{(I_{g_{c}},\ L_{6})}&\mathbf{0}_{(I_{g_{c}},\ L_{7})}&\mathbf{0}_{(I_{g_{c}},\ L_{8})}\\
\mathbf{0}_{(I_{g_{2}},\ L_{1})}& \centering{\Gamma_{b}^{2}}&\mathbf{0}_{(I_{g_{w}},\ L_{3})}&\mathbf{0}_{(I_{g_{w}},\ L_{4})}&\mathbf{0}_{(I_{g_{w}},\ L_{5})}&\mathbf{0}_{(I_{g_{w}},\ L_{6})}&\mathbf{0}_{(I_{g_{w}},\ L_{7})}&\mathbf{0}_{(I_{g_{w}},\ L_{8})}\\
\mathbf{0}_{(I_{G_{1}},\ L_{1})}& \mathbf{0}_{(I_{G_{1}},\ L_{2})} &\mathbf{0}_{(I_{G_{1}},\ L_{3})}&\centering{\Gamma_{c}^{4}} & (A B^{\top})_{(I_{G_{1}},\ \cdot \ )} & \mathbf{0}_{(I_{G_{1}},\ L_{6})}& A_{(I_{G_{1}},\ \cdot \ )} &-A_{(I_{G_{1}},\ \cdot \ )}\\
\mathbf{0}_{(I_{H_{1}},\ L_{1})} &\mathbf{0}_{(I_{H_{1}},\ L_{2})}&\centering{\Gamma_{f}^{3}} & \mathbf{0}_{(I_{H_{1}},\ L_{4}) } & \mathbf{0}_{(I_{H_{1}},\ L_{5})} & \mathbf{0}_{(I_{H_{1}},\ L_{6})}& \mathbf{0}_{(I_{H_{1}},\ L_{7})}& \mathbf{0}_{(I_{H_{1}},\ L_{8})}\\
\mathbf{0}_{(I_{G_{2}},\ L_{1})} &\mathbf{0}_{(I_{G_{2}},\ L_{2})}&
\centering{\Gamma_{g}^{3}}& \mathbf{0}_{(I_{G_{2}},\ L_{4}) } & \mathbf{0}_{(I_{G_{2}},\ L_{5})} & \mathbf{0}_{(I_{G_{2}},\ L_{6})}& \mathbf{0}_{(I_{G_{2}},\ L_{7})}& \mathbf{0}_{(I_{G_{2}},\ L_{8})}\\
\mathbf{0}_{(I_{H_{2}},\ L_{1})}& \mathbf{0}_{(I_{H_{2}},\ L_{2})}& \mathbf{0}_{(I_{H_{2}},\ L_{3})}&\centering{\Gamma_{h}^{4}} & \mathbf{0}_{(I_{H_{2}},\ L_{5}) } & \mathbf{0}_{(I_{H_{2}},\ L_{6})} & \mathbf{0}_{(I_{H_{2}},\ L_{7})}& \mathbf{0}_{(I_{H_{2}},\ L_{8})}\\
\mathbf{0}_{(I_{G_{3}},\ L_{1})}&\mathbf{0}_{(I_{G_{3}},\ L_{2})}& \mathbf{0}_{(I_{G_{3}},\ L_{3})} &\mathbf{0}_{(I_{G_{3}},\ L_{4})} & (B B^{\top})_{(I_{G_{3}},\ \cdot \ )} &\centering{\Gamma_{i}^{6}} & B_{(I_{G_{3}},\ \cdot \ )} & -B_{(I_{G_{3}},\ \cdot \ )}\\
\mathbf{0}_{(I_{GH_{3}},\ L_{1})}&\mathbf{0}_{(I_{GH_{3}},\ L_{2})} & \mathbf{0}_{(I_{GH_{3}},\ L_{3})} &\mathbf{0}_{(I_{GH_{3}},\ L_{4})} & (B B^{\top})_{(I_{GH_{3}},\ \cdot \ )} & \centering{\Gamma_{j}^{6}} & B_{(I_{GH_{3}},\ \cdot \ )} &-B_{(I_{GH_{3}},\ \cdot \ )} \\
\mathbf{0}_{(I_{GH_{3}},\ L_{1})} &\mathbf{0}_{(I_{GH_{3}},\ L_{2})} & \mathbf{0}_{(I_{GH_{3}},\ L_{3})} &\mathbf{0}_{(I_{GH_{3}},\ L_{4})} & \centering{\Gamma_{k}^{5}} & \mathbf{0}_{(I_{GH_{3}},\ L_{6})}&\mathbf{0}_{(I_{GH_{3}},\ L_{7})}&\mathbf{0}_{(I_{GH_{3}},\ L_{8})}\\
\mathbf{0}_{(I_{H_{3}},\ L_{1})}  &\mathbf{0}_{(I_{H_{3}},\ L_{2})}& \mathbf{0}_{(I_{H_{3}},\ L_{3})} &\mathbf{0}_{(I_{H_{3}},\ L_{4})} & \centering{\Gamma_{l}^{5}} & \mathbf{0}_{(I_{H_{3}},\ L_{6})}&\mathbf{0}_{(I_{H_{3}},\ L_{7})}&\mathbf{0}_{(I_{H_{3}},\ L_{8})}\\
\mathbf{1}_{(I_{G_{4}},\ L_{1})}&\mathbf{0}_{(I_{G_{4}},\ L_{2})}& \mathbf{0}_{(I_{G_{4}},\ L_{3})} & \mathbf{0}_{(I_{G_{4}},\ L_{4})} & \centering{\Gamma_{m}^{5}} & \mathbf{0}_{(I_{G_{4}},\ L_{6}) }& \mathbf{0}_{(I_{G_{4}},\ L_{7}) }& \mathbf{0}_{(I_{G_{4}},\ L_{8}) } \\
\mathbf{1}_{(I_{GH_{4}},\ L_{1})}  & \mathbf{0}_{(I_{GH_{4}},\ L_{2})}&\mathbf{0}_{(I_{GH_{4}},\ L_{3})} & \mathbf{0}_{(I_{GH_{4}},\ L_{4})} & \centering{\Gamma_{n}^{5}} & \mathbf{0}_{(I_{GH_{4}},\ L_{6}) }& \mathbf{0}_{(I_{GH_{4}},\ L_{7}) }& \mathbf{0}_{(I_{GH_{4}},\ L_{8}) }\\
\mathbf{0}_{(I_{GH_{4}},\ L_{1})}  & \mathbf{0}_{(I_{GH_{4}},\ L_{2})}& \mathbf{0}_{(I_{GH_{4}},\ L_{3})}  & \mathbf{0}_{(I_{GH_{4}},\ L_{4})}& \mathbf{0}_{(I_{GH_{4}},\ L_{5})}& \centering{\Gamma_{o}^{6}}& \mathbf{0}_{(I_{GH_{4}},\ L_{7})}& \mathbf{0}_{(I_{GH_{4}},\ L_{8})}\\
\mathbf{0}_{(I_{H_{4}},\ L_{1})}  &\mathbf{0}_{(I_{H_{4}},\ L_{2})}& \mathbf{0}_{(I_{H_{4}},\ L_{3})}  & \mathbf{0}_{(I_{H_{4}},\ L_{4})}& \mathbf{0}_{(I_{H_{4}},\ L_{5})}& \centering{\Gamma_{p}^{6}}& \mathbf{0}_{(I_{H_{4}},\ L_{7})}& \mathbf{0}_{(I_{H_{4}},\ L_{8})}\\
\mathbf{0}_{(I_{G_{5}},\ L_{1})}&\centering{\Gamma_q^2}&\mathbf{0}_{(I_{G_{5}},\ L_{3})}&\mathbf{0}_{(I_{G_{5}},\ L_{4})}& B^{\top}_{(I_{G_{5}},\ \cdot \ )} &\mathbf{0}_{(I_{G_{5}},\ L_{6})}&\centering{\Gamma_q^7}&\Gamma_q^8 \\
\mathbf{0}_{(I_{GH_{5}},\ L_{1})}& \centering{\Gamma_r^2}&\mathbf{0}_{(I_{GH_{5}},\ L_{3})}&\mathbf{0}_{(I_{GH_{5}},\ L_{4})}& B^{\top}_{(I_{GH_{5}},\ \cdot \ )} &\mathbf{0}_{(I_{GH_{5}},\ L_{6})}&\centering{\Gamma_r^7}&\Gamma_r^8\\
\mathbf{0}_{(I_{GH_{5}},\ L_{1})}&\mathbf{0}_{(I_{GH_{5}},\ L_{2})}&\mathbf{0}_{(I_{GH_{5}},\ L_{3})}&\mathbf{0}_{(I_{GH_{5}},\ L_{4})}&\mathbf{0}_{(I_{GH_{5}},\ L_{5})}&\mathbf{0}_{(I_{GH_{5}},\ L_{6})}&\centering{\Gamma_s^7}&\mathbf{0}_{(I_{GH_{5}},\ L_{8})}\\
\mathbf{0}_{(I_{H_{5}},\ L_{1})}&\mathbf{0}_{(I_{H_{5}},\ L_{2})}&\mathbf{0}_{(I_{H_{5}},\ L_{3})}&\mathbf{0}_{(I_{H_{5}},\ L_{4})}&\mathbf{0}_{(I_{H_{5}},\ L_{5})}&\mathbf{0}_{(I_{H_{5}},\ L_{6})}&\centering{\Gamma_t^7}&\mathbf{0}_{(I_{H_{5}},\ L_{8})}\\
\mathbf{0}_{(I_{G_{6}},\ L_{1})}& \centering{\Gamma_u^2}&\mathbf{0}_{(I_{G_{6}},\ L_{3})}&\mathbf{0}_{(I_{G_{6}},\ L_{4})}& -B^{\top}_{(I_{G_{6}},\ \cdot \ )} &\mathbf{0}_{(I_{G_{6}},\ L_{6})}&\centering{\Gamma_u^7}&\Gamma_u^8\\
\mathbf{0}_{(I_{GH_{6}},\ L_{1})}& \centering{\Gamma_v^2}&\mathbf{0}_{(I_{GH_{6}},\ L_{3})}&\mathbf{0}_{(I_{GH_{6}},\ L_{4})}& -B^{\top}_{(I_{GH_{6}},\ \cdot \ )} &\mathbf{0}_{(I_{GH_{6}},\ L_{6})}&\centering{\Gamma_v^7}&\Gamma_v^8\\
\mathbf{0}_{(I_{GH_{6}},\ L_{1})}&\mathbf{0}_{(I_{GH_{6}},\ L_{2})}&\mathbf{0}_{(I_{GH_{6}},\ L_{3})}&\mathbf{0}_{(I_{GH_{6}},\ L_{4})}&\mathbf{0}_{(I_{GH_{6}},\ L_{5})}&\mathbf{0}_{(I_{GH_{6}},\ L_{6})}&\mathbf{0}_{(I_{GH_{6}},\ L_{7})}&\Gamma_w^8\\
\mathbf{0}_{(I_{H_{6}},\ L_{1})}&\mathbf{0}_{(I_{H_{6}},\ L_{2})}&\mathbf{0}_{(I_{H_{6}},\ L_{3})}&\mathbf{0}_{(I_{H_{6}},\ L_{4})}&\mathbf{0}_{(I_{H_{6}},\ L_{5})}&\mathbf{0}_{(I_{H_{6}},\ L_{6})}&\mathbf{0}_{(I_{H_{6}},\ L_{7})}&\Gamma_x^8
\end{array}
\right],
\end{equation*}
 with $L_{q},\ q=1,\ \cdots,\ 8$ being index sets of columns corresponding to the variables $C$, $\overline{w}$, $\zeta$, $z$, $\alpha$, $\xi$, $\beta$, and $\gamma$ respectively, while we have 
\begin{equation*}
\begin{array}{ll}
\Gamma_{a}^{1} := 
\left[
\begin{array}{c}
1_{(I_{g_{1}},\ \cdot)} \\ -1_{(I_{g_{2}} ,\ \cdot)}
\end{array}
\right],&
\Gamma_{b}^{2}  := 
\left[
\begin{array}{c}
I_{(I_{g_{3}},\ \cdot)} \\ -I_{(I_{g_{4}},\ \cdot)} 
\end{array}
\right],\\
\Gamma_{q}^{2}:=\left[\begin{array}{c}  {E^n_K}_{(I_{G_{5}},\ \cdot)} \end{array} \right],&
\Gamma_{r}^{2}:=\left[\begin{array}{c} {E^n_K}_{(I_{GH_{5}},\ \cdot)}  \end{array} \right],\\
\Gamma_{u}^{2}:=\left[\begin{array}{c}  {E^n_K}_{(I_{G_{6}},\cdot)} \end{array} \right],&
\Gamma_{v}^{2}:=\left[\begin{array}{c}  {E^n_K}_{(I_{GH_{6}},\cdot)} \end{array} \right],\\
 \Gamma_{f}^{3}:=\left[\begin{array}{cc} \mathbf{0}_{(I_{H_{1}},\ \Psi_{1} \cup \Psi_{3})}&  I_{(I_{H_{1}},\ \Psi_{2})} \end{array} \right],&
\Gamma_{g}^{3} := \left[\begin{array}{cc} \mathbf{0}_{(I_{G_{2}},\  \Psi_{1} \cup \Psi_{2})}& -I_{(I_{G_{2}},\ \Psi_{3})} \end{array} \right],\\
\Gamma_{c}^{4}:= 
\left[
\begin{array}{cc}
\mathbf{0}_{(I_{G_{1}},\  \Psi_{2})} & I_{(I_{G_{1}} ,\  \Psi_{3})}
\end{array}
\right],&
 \Gamma_{h}^{4} :=\left[\begin{array}{cc} I_{(I_{H_{2}},\  \Psi_{2})}& \mathbf{0}_{(I_{H_{2}},\ \Psi_{3})} \end{array} \right],\\
\Gamma_{k}^{5}:=\left[\begin{array}{cc} I_{(I_{GH_{3}},\ \Lambda_{1}) }& \mathbf{0}_{(I_{GH_{3}},\ \Lambda_{2} \cup \Lambda_{3} \cup \Lambda_{u})} \end{array} \right],&
 \Gamma_{l}^{5}:=\left[\begin{array}{cc} \mathbf{0}_{(I_{H_{3}},\  \Lambda_{1} \cup \Lambda_{3} \cup \Lambda_{u})}& I_{(I_{H_{3}},\ \Lambda_{2})} \end{array} \right],\\
 \Gamma_{m}^{5}:=\left[\begin{array}{cc} \mathbf{0}_{(I_{G_{4}},\ \Lambda_{1} \cup \Lambda_{2} \cup \Lambda_{3})} &  -I_{(I_{G_{4}},\ \Lambda_{u})} \end{array} \right],&
 \Gamma_{n}^{5}:=\left[\begin{array}{cc} \mathbf{0}_{(I_{GH_{4}},\ \Lambda_{1} \cup \Lambda_{2} \cup \Lambda^{+}_{3} \cup \Lambda_{u})} & -I_{(I_{GH_{4}},\  \Lambda^{c}_{3})} \end{array} \right],\\ 
\Gamma_{i}^{6}:=\left[\begin{array}{cc}\mathbf{0}_{(I_{G_{3}},\ \Lambda_{1} \cup \Lambda_{2})}& I_{(I_{G_{3}},\ \Lambda_{3} \cup \Lambda_{u})} \end{array} \right],&
\Gamma_{j}^{6}:=\left[\begin{array}{cc}I_{(I_{GH_{3}},\  \Lambda_{1})}&\mathbf{0}_{(I_{GH_{3}},\ \Lambda_{2}\cup \Lambda_{3} \cup \Lambda_{u})} \end{array} \right], \\
\Gamma_{o}^{6}:=\left[\begin{array}{cc} \mathbf{0}_{(I_{GH_{4}},\ \Lambda_{1} \cup \Lambda_{2} \cup \Lambda^{+}_{3} \cup \Lambda_{u})} & I_{(I_{GH_{4}},\ \Lambda^{c}_{3})} \end{array} \right],&
 \Gamma_{p}^{6}:=\left[\begin{array}{cc} I_{(I_{H_{4}},\ \Lambda_{1} \cup \Lambda_{2} \cup \Lambda^{+}_{3})} & \mathbf{0}_{(I_{H_{4}},\  \Lambda^{c}_{3} \cup \Lambda_{u}) }  \end{array} \right],\\ 
\Gamma_{q}^{7}:=\left[\begin{array}{cc} \mathbf{0}_{(I_{G_{5}},\ \Pi_{1}^0 \cup \Pi_{2} \cup \Pi_{3})} & I_{(I_{G_{5}},\ \Pi_{1}^+)} \end{array} \right],&
\Gamma_{r}^{7}:=\left[\begin{array}{cc} I_{(I_{GH_{5}},\ \Pi_{1}^0)} & \mathbf{0}_{(I_{GH_{5}},\ \Pi_1^+\cup \Pi_2\cup\Pi_3)} \end{array} \right],\\
\Gamma_{s}^{7}:=\left[\begin{array}{cc} I_{(I_{GH_{5}},\ \Pi_{1}^0)} & \mathbf{0}_{(I_{GH_{5}},\  \Pi^{+}_{1} \cup \Pi_{2}\cup \Pi_3) }  \end{array} \right],&
\Gamma_{t}^{7}:=\left[\begin{array}{cc} \mathbf{0}_{(I_{H_{5}},\ \Pi_{1})} & I_{(I_{H_{5}},\   \Pi_{2}\cup \Pi_3) }  \end{array} \right],\\
\Gamma_{u}^{7}:=\left[\begin{array}{cc} \mathbf{0}_{(I_{G_{6}},\ \Pi_{1} \cup \Pi_{2} \cup \Pi_{3}^0)} & -I_{(I_{G_{6}},\ \Pi_{3}^+)} \end{array} \right],&
\Gamma_{v}^{7}:=\left[\begin{array}{cc} \mathbf{0}_{(I_{GH_{6}},\ \Pi_{1} \cup \Pi_{2} \cup \Pi_{3}^+)} & -I_{(I_{GH_{6}},\ \Pi_{3}^0)} \end{array} \right],\\
\Gamma_{q}^{8}:=\left[\begin{array}{cc} \mathbf{0}_{(I_{G_{5}},\ \Pi_{1}^0 \cup \Pi_{2} \cup \Pi_{3})} & -I_{(I_{G_{5}},\ \Pi_{1}^+)} \end{array} \right],&
\Gamma_{r}^{8}:=\left[\begin{array}{cc} -I_{(I_{GH_{5}},\ \Pi_{1}^0)} & \mathbf{0}_{(I_{GH_{5}},\ \Pi_1^+\cup \Pi_2\cup\Pi_3)} \end{array} \right],\\
\Gamma_{u}^{8}:=\left[\begin{array}{cc} \mathbf{0}_{(I_{G_{6}},\ \Pi_{1} \cup \Pi_{2} \cup \Pi_{3}^0)} & I_{(I_{G_{6}},\ \Pi_{3}^+)} \end{array} \right],&
\Gamma_{v}^{8}:=\left[\begin{array}{cc} \mathbf{0}_{(I_{GH_{6}},\ \Pi_{1} \cup \Pi_{2} \cup \Pi_{3}^+)} & I_{(I_{GH_{6}},\ \Pi_{3}^0)} \end{array} \right],\\
\Gamma_{w}^{8}:=\left[\begin{array}{cc} \mathbf{0}_{(I_{GH_{6}},\ \Pi_{1} \cup \Pi_{2} \cup \Pi_{3}^+)} & I_{(I_{GH_{6}},\ \Pi_{3}^0)} \end{array} \right],&
\Gamma_{x}^{8}:=\left[\begin{array}{cc}  I_{(I_{H_{6}},\ \Pi_{1}\cup\Pi_2)}&\mathbf{0}_{(I_{H_{6}},\ \Pi_{3} )} \end{array} \right].
\end{array}    
\end{equation*}
\end{proposition}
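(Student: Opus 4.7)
The proposition is a purely computational claim: the gradient vectors listed in \eqref{grad1}--\eqref{grad2} can be arranged, row by row, to form the displayed block matrix $\Gamma$. Since $g$, $G$, $H$ are affine, namely $g(v)=Rv+b$, $G(v)=Pv+a$, $H(v)=Qv$, their gradients are simply the corresponding rows of $R$, $P$, and $Q$. The plan is therefore to (i) enumerate the index sets $I_g$, $I_G$, $I_H$, $I_{GH}$ in the active decomposition of $v$, (ii) identify the relevant rows of $R$, $P$, and $Q$ for each index set, and (iii) verify that the resulting rows match the block pattern of $\Gamma$ entry by entry, against the $8$ column groups $L_1,\dots,L_8$ corresponding to the variable partition $(C,\overline w,\zeta,z,\alpha,\xi,\beta,\gamma)$.

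I would first handle the bound constraints. By the definition of $R$, the four possible rows of $\nabla g_i$ are $\pm e_1^\top$ (for the $C$--bounds) and $\pm(e_k^{\overline n})^\top$ (for the $\overline w$--bounds); restricting to the active indices $I_{g_1},\dots,I_{g_4}$ produces precisely the two blocks $\Gamma_a^1$ and $\Gamma_b^2$, with zeros in every other column group. Next, I would partition $I_G$, $I_H$, $I_{GH}$ using Proposition~\ref{prop1}, which translates each $I_{G_k},I_{H_k},I_{GH_k}$ into the finer sets $\Psi_j, \Lambda_j, \Pi_j$ (and their subpartitions $\Psi_1^0,\Psi_1^+$, $\Lambda_3^+,\Lambda_3^c$, $\Pi_1^0,\Pi_1^+,\Pi_3^0,\Pi_3^+$) used as column indices inside $\Gamma$.

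With this identification, I would work through the six ``Parts'' of the complementarity system in turn. For each Part, the rows of $P$ (giving $\nabla G_i$) and of $Q$ (giving $\nabla H_i$) are read directly from the block structure of $P$ displayed earlier and from $Q=[\mathbf 0_{\overline n\times \overline m}\ I_{\overline m}]$. For instance, in Part~1 the rows $\nabla G_i$ for $i\in I_{G_1}\cup I_{GH_1}$ pick out the block row of $P$ starting with $[\mathbf 0,\ \mathbf 0,\ \mathbf 0,\ I_{Km_1},\ AB^\top,\ \mathbf 0,\ A,\ -A]$, while $\nabla H_i$ for $i\in I_{H_1}\cup I_{GH_1}$ pick out the corresponding canonical basis rows of $Q$ indexed by the $\zeta$--block; restricting row indices to $\Psi_2$, $\Psi_3$, or the (empty) set $I_{GH_1}$ via Proposition~\ref{prop1} yields exactly the rows labelled $\Gamma_c^4$, $\Gamma_f^3$, $\Gamma_g^3$, $\Gamma_h^4$ in $\Gamma$. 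I would repeat the same extraction for Parts~2--6, matching in each case the nonzero sub--blocks of $P$ to the $AB^\top$, $BB^\top$, $A$, $B$, $\pm I$, $E_K^n$ entries of $\Gamma$, and the rows of $Q$ to the identity blocks $\Gamma_f^3$, $\Gamma_h^4$, $\Gamma_k^5$, $\Gamma_l^5$, $\Gamma_m^5$, $\Gamma_n^5$, $\Gamma_s^7$, $\Gamma_t^7$, $\Gamma_w^8$, $\Gamma_x^8$, etc.

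The only real obstacle is clerical rather than mathematical: the number of index subsets is large, and one must be careful that (a) the index sets $\Psi_j,\Lambda_j,\Pi_j$ used as row labels are consistent with those used as column labels inside the $\Gamma_\bullet^\bullet$ blocks, and (b) the ordering of columns within $L_3,\dots,L_8$ matches the partition of the corresponding variable block into the subsets $\Psi_1,\Psi_2,\Psi_3$, $\Lambda_1,\Lambda_2,\Lambda_3^+,\Lambda_3^c,\Lambda_u$, and $\Pi_1^0,\Pi_1^+,\Pi_2,\Pi_3^0,\Pi_3^+$. Once this bookkeeping is fixed, each block of $\Gamma$ arises as a direct restriction of a block of $R$, $P$, or $Q$, and the proposition follows.
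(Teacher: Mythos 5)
Your proposal is correct and follows essentially the same route as the paper: the gradients of the affine maps $g$, $G$, $H$ are read off as rows of $R$, $P$, $Q$, the index sets are refined via Proposition~\ref{prop1} (in particular $I_{GH_1}=I_{GH_2}=\emptyset$, so those rows are dropped), and the blocks of $\Gamma$ are obtained by restricting to the active indices. The paper's own proof is just a terser version of this same bookkeeping argument.
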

\begin{proof}
    Based on the definition provided in equation \eqref{grad1} and \eqref{grad2}, we introduce the matrix 
\begin{equation*}
\begin{array}{cccccc}
        
        \Gamma =[\nabla g(v)_{I_g}^{\top},&\
        \nabla G(v)_{I_{G_1}}^{\top},&\ 
         \nabla H(v)_{I_{{H}_1}}^{\top},&\
         \nabla G(v)_{I_{G_2}}^{\top} &\
         \nabla H(v)_{I_{{H}_2}}^{\top},\\
         \nabla G(v)_{I_{G_3}}^{\top},&\
         \nabla G(v)_{I_{{GH}_3}}^{\top},&\
         \nabla H(v)_{I_{{GH}_3}}^{\top},&\
         \nabla H(v)_{I_{{H}_3}}^{\top},&\
         \nabla G(v)_{I_{G_4}}^{\top},\\ 
         \nabla G(v)_{I_{{GH}_4}}^{\top},&\
         \nabla H(v)_{I_{{GH}_4}}^{\top},&\
         \nabla H(v)_{I_{{H}_4}}^{\top},&\
         \nabla G(v)_{I_{G_5}}^{\top},&\
         \nabla G(v)_{I_{{GH}_5}}^{\top},\\
         \nabla H(v)_{I_{{GH}_5}}^{\top},&\
          \nabla H(v)_{I_{{H}_5}}^{\top},&\
         \nabla G(v)_{I_{G_6}}^{\top},&\ 
         \nabla G(v)_{I_{{GH}_6}}^{\top},&\
         \nabla H(v)_{I_{{GH}_6}}^{\top},&\
         \nabla H(v)_{I_{{H}_6}}^{\top}].
   \end{array}   
\end{equation*}
    In view of Proposition~\ref{prop1}, we have that the sets  $I_{{GH}_1}$ and $I_{{GH}_2}$ are empty; thus, the corresponding columns are excluded from the matrix $\Gamma$.  By incorporating the results established in Propositions~\ref{prop1}, the desired matrix formulation follows directly. 
\end{proof}


We are now ready to provide the proof of Theorem \ref{thr1}.

\begin{proof}
Under the assumption $-\overline{w} < w^t < \overline{w}$ for each $t = 1, \cdots, K$, it immediately follows that
\begin{equation}
    \beta = 0, \quad \gamma = 0.
\end{equation}
Consequently, the associated index sets become empty, specifically,
\begin{equation*}
    I_{G_5} = \emptyset, \quad I_{G_6} = \emptyset, \quad I_{{GH}_5} = \emptyset, \quad I_{{GH}_6} = \emptyset.
\end{equation*}

Thus, the matrix $\Gamma$ in Propositon 3 reduces to the following:
\begin{equation*}
\Gamma=\left[
\small
\begin{array}{cccccccc}
\centering{\Gamma_{a}^{1}}&\mathbf{0}_{(I_{g_{1}},\ L_{2})}&\mathbf{0}_{(I_{g_{c}},\ L_{3})}&\mathbf{0}_{(I_{g_{c}},\ L_{4})}&\mathbf{0}_{(I_{g_{c}},\ L_{5})}&\mathbf{0}_{(I_{g_{c}},\ L_{6})}&\mathbf{0}_{(I_{g_{c}},\ L_{7})}&\mathbf{0}_{(I_{g_{c}},\ L_{8})}\\
\mathbf{0}_{(I_{g_{2}},\ L_{1})}& \centering{\Gamma_{b}^{2}}&\mathbf{0}_{(I_{g_{w}},\ L_{3})}&\mathbf{0}_{(I_{g_{w}},\ L_{4})}&\mathbf{0}_{(I_{g_{w}},\ L_{5})}&\mathbf{0}_{(I_{g_{w}},\ L_{6})}&\mathbf{0}_{(I_{g_{w}},\ L_{7})}&\mathbf{0}_{(I_{g_{w}},\ L_{8})}\\
\mathbf{0}_{(I_{G_{1}},\ L_{1})}& \mathbf{0}_{(I_{G_{1}},\ L_{2})} &\mathbf{0}_{(I_{G_{1}},\ L_{3})}&\centering{\Gamma_{c}^{4}} & (A B^{\top})_{(I_{G_{1}},\ \cdot \ )} & \mathbf{0}_{(I_{G_{1}},\ L_{6})}& \mathbf{0}_{(I_{G_{1}},\ L_{7})} &\mathbf{0}_{(I_{G_{1}},\ L_{8})}\\
\mathbf{0}_{(I_{H_{1}},\ L_{1})} &\mathbf{0}_{(I_{H_{1}},\ L_{2})}&\centering{\Gamma_{f}^{3}} & \mathbf{0}_{(I_{H_{1}},\ L_{4}) } & \mathbf{0}_{(I_{H_{1}},\ L_{5})} & \mathbf{0}_{(I_{H_{1}},\ L_{6})}& \mathbf{0}_{(I_{H_{1}},\ L_{7})}& \mathbf{0}_{(I_{H_{1}},\ L_{8})}\\
\mathbf{0}_{(I_{G_{2}},\ L_{1})} &\mathbf{0}_{(I_{G_{2}},\ L_{2})}&
\centering{\Gamma_{g}^{3}}& \mathbf{0}_{(I_{G_{2}},\ L_{4}) } & \mathbf{0}_{(I_{G_{2}},\ L_{5})} & \mathbf{0}_{(I_{G_{2}},\ L_{6})}& \mathbf{0}_{(I_{G_{2}},\ L_{7})}& \mathbf{0}_{(I_{G_{2}},\ L_{8})}\\
\mathbf{0}_{(I_{H_{2}},\ L_{1})}& \mathbf{0}_{(I_{H_{2}},\ L_{2})}& \mathbf{0}_{(I_{H_{2}},\ L_{3})}&\centering{\Gamma_{h}^{4}} & \mathbf{0}_{(I_{H_{2}},\ L_{5}) } & \mathbf{0}_{(I_{H_{2}},\ L_{6})} & \mathbf{0}_{(I_{H_{2}},\ L_{7})}& \mathbf{0}_{(I_{H_{2}},\ L_{8})}\\
\mathbf{0}_{(I_{G_{3}},\ L_{1})}&\mathbf{0}_{(I_{G_{3}},\ L_{2})}& \mathbf{0}_{(I_{G_{3}},\ L_{3})} &\mathbf{0}_{(I_{G_{3}},\ L_{4})} & (B B^{\top})_{(I_{G_{3}},\ \cdot \ )} &\centering{\Gamma_{i}^{6}} & \mathbf{0}_{(I_{G_{3}},\ L_{7})} & \mathbf{0}_{(I_{G_{3}},\ L_{8})}\\
\mathbf{0}_{(I_{GH_{3}},\ L_{1})}&\mathbf{0}_{(I_{GH_{3}},\ L_{2})} & \mathbf{0}_{(I_{GH_{3}},\ L_{3})} &\mathbf{0}_{(I_{GH_{3}},\ L_{4})} & (B B^{\top})_{(I_{GH_{3}},\ \cdot \ )} & \centering{\Gamma_{j}^{6}} & \mathbf{0}_{(I_{GH_{3}},\ L_{7})} &\mathbf{0}_{(I_{GH_{3}},\ L_{8})} \\
\mathbf{0}_{(I_{GH_{3}},\ L_{1})} &\mathbf{0}_{(I_{GH_{3}},\ L_{2})} & \mathbf{0}_{(I_{GH_{3}},\ L_{3})} &\mathbf{0}_{(I_{GH_{3}},\ L_{4})} & \centering{\Gamma_{k}^{5}} & \mathbf{0}_{(I_{GH_{3}},\ L_{6})}&\mathbf{0}_{(I_{GH_{3}},\ L_{7})}&\mathbf{0}_{(I_{GH_{3}},\ L_{8})}\\
\mathbf{0}_{(I_{H_{3}},\ L_{1})}  &\mathbf{0}_{(I_{H_{3}},\ L_{2})}& \mathbf{0}_{(I_{H_{3}},\ L_{3})} &\mathbf{0}_{(I_{H_{3}},\ L_{4})} & \centering{\Gamma_{l}^{5}} & \mathbf{0}_{(I_{H_{3}},\ L_{6})}&\mathbf{0}_{(I_{H_{3}},\ L_{7})}&\mathbf{0}_{(I_{H_{3}},\ L_{8})}\\
\mathbf{1}_{(I_{G_{4}},\ L_{1})}&\mathbf{0}_{(I_{G_{4}},\ L_{2})}& \mathbf{0}_{(I_{G_{4}},\ L_{3})} & \mathbf{0}_{(I_{G_{4}},\ L_{4})} & \centering{\Gamma_{m}^{5}} & \mathbf{0}_{(I_{G_{4}},\ L_{6}) }& \mathbf{0}_{(I_{G_{4}},\ L_{7}) }& \mathbf{0}_{(I_{G_{4}},\ L_{8}) } \\
\mathbf{1}_{(I_{GH_{4}},\ L_{1})}  & \mathbf{0}_{(I_{GH_{4}},\ L_{2})}&\mathbf{0}_{(I_{GH_{4}},\ L_{3})} & \mathbf{0}_{(I_{GH_{4}},\ L_{4})} & \centering{\Gamma_{n}^{5}} & \mathbf{0}_{(I_{GH_{4}},\ L_{6}) }& \mathbf{0}_{(I_{GH_{4}},\ L_{7}) }& \mathbf{0}_{(I_{GH_{4}},\ L_{8}) }\\
\mathbf{0}_{(I_{GH_{4}},\ L_{1})}  & \mathbf{0}_{(I_{GH_{4}},\ L_{2})}& \mathbf{0}_{(I_{GH_{4}},\ L_{3})}  & \mathbf{0}_{(I_{GH_{4}},\ L_{4})}& \mathbf{0}_{(I_{GH_{4}},\ L_{5})}& \centering{\Gamma_{o}^{6}}& \mathbf{0}_{(I_{GH_{4}},\ L_{7})}& \mathbf{0}_{(I_{GH_{4}},\ L_{8})}\\
\mathbf{0}_{(I_{H_{4}},\ L_{1})}  &\mathbf{0}_{(I_{H_{4}},\ L_{2})}& \mathbf{0}_{(I_{H_{4}},\ L_{3})}  & \mathbf{0}_{(I_{H_{4}},\ L_{4})}& \mathbf{0}_{(I_{H_{4}},\ L_{5})}& \centering{\Gamma_{p}^{6}}& \mathbf{0}_{(I_{H_{4}},\ L_{7})}& \mathbf{0}_{(I_{H_{4}},\ L_{8})}\\
\mathbf{0}_{(I_{H_{5}},\ L_{1})}&\mathbf{0}_{(I_{H_{5}},\ L_{2})}&\mathbf{0}_{(I_{H_{5}},\ L_{3})}&\mathbf{0}_{(I_{H_{5}},\ L_{4})}&\mathbf{0}_{(I_{H_{5}},\ L_{5})}&\mathbf{0}_{(I_{H_{5}},\ L_{6})}&\centering{\Gamma_t^7}&\mathbf{0}_{(I_{H_{5}},\ L_{8})}\\
\mathbf{0}_{(I_{H_{6}},\ L_{1})}&\mathbf{0}_{(I_{H_{6}},\ L_{2})}&\mathbf{0}_{(I_{H_{6}},\ L_{3})}&\mathbf{0}_{(I_{H_{6}},\ L_{4})}&\mathbf{0}_{(I_{H_{6}},\ L_{5})}&\mathbf{0}_{(I_{H_{6}},\ L_{6})}&\mathbf{0}_{(I_{H_{6}},\ L_{7})}&\Gamma_x^8
\end{array}
\right],
\end{equation*}
According to Definition \ref{positive_linearly_independence}, we assume that there exists a set of vectors $ {\rho}=\left({\rho}^{1},{\rho}^{2},{\rho}^{3},...,{\rho}^{15},{\rho}^{16}\right)$, satisfying the nonnegativity condition: 
\begin{equation}\label{nonnegativity}
    \left({\rho}^1,{\rho}^2,{\rho}^8,{\rho}^9,{\rho}^{12},{\rho}^{13}\right)\geq \mathbf{0}
\end{equation}
such that the following equality holds:
\begin{equation}\label{MPECMFCQeq}
    \mathbf{0}=\rho^{\top}\Gamma=[ S_1,\ S_2,\ S_3,\ S_4,\ S_5,\ S_6,\ S_7,\ S_8],
\end{equation}
where ${\rho}_{i}$ corresponds to i-th row of the block matrix $\Gamma$, $i=1,\cdots,16$. Consequently, we have the following relationship:
\begin{align}    S_1&=\left({\rho^{1}}\right)^{\top}\Gamma_a^1+\left({\rho^{11}}\right)^{\top}1_{\left(I_{G_4},L_1\right)}+\left({\rho^{12}}\right)^{\top}1_{\left(I_{{GH}_4},L_1\right)}=\mathbf{0},\label{S1} \\ 
S_2&=\left({\rho^{2}}\right)^{\top}\Gamma_b^2=\mathbf{0},\label{S2}\\    
S_3&=\left({\rho^{4}}\right)^{\top}\Gamma_f^3+\left({\rho^{5}}\right)^{\top}\Gamma_g^3=\mathbf{0},\label{S3}\\    S_4&=\left({\rho^{3}}\right)^{\top}\Gamma_c^4+\left({\rho^{6}}\right)^{\top}\Gamma_h^4=\mathbf{0},\label{S4}\\    S_5&=\left({\rho^{3}}\right)^{\top}\left(AB^{\top}\right)_{\left(I_{G_1},\cdot\right)}+\left({\rho^{7}}\right)^{\top}\left(BB^{\top}\right)_{\left(I_{G_3},\cdot\right)}\nonumber      +\left({\rho^{8}}\right)^{\top}\left(BB^{\top}\right)_{\left(I_{{GH}_3},\cdot\right)}   \\ &\quad+\left({\rho^{9}}\right)^{\top}\Gamma_k^5+\left({\rho^{10}}\right)^{\top}\Gamma_l^5+\left({\rho^{11}}\right)^{\top}\Gamma_m^5+\left({\rho^{12}}\right)^{\top}\Gamma_n^5 =\mathbf{0} ,\label{S5}\\ 
S_6&=\left({\rho^{7}}\right)^{\top}\Gamma_i^6+\left({\rho^{8}}\right)^{\top}\Gamma_j^6+\left({\rho^{13}}\right)^{\top}\Gamma_o^6+\left({\rho^{14}}\right)^{\top}\Gamma_p^6=\mathbf{0},\label{S6}\\    S_7&=\left({\rho^{15}}\right)^{\top}\Gamma_t^7=\mathbf{0},\label{S7}\\
    S_8&=\left({\rho^{16}}\right)^{\top}\Gamma_x^8=\mathbf{0}\label{S8}.
\end{align}

To manage the complexity of multiple multipliers and their corresponding index sets efficiently, we employ a standardized representation. Each multiplier $\rho^k$ is decomposed based on designated index sets, defined as $\rho^k = \left( \rho^k_{A}, \rho^k_{B} \right)$, where $A$ and $B$ correspond to specific components of these sets. For instance, the multiplier $\rho^7$ is partitioned as $\rho^7 = \left( \left(\rho^7\right)_{\Lambda_3^+}, \left(\rho^7\right)_{\Lambda_3^c}, \left(\rho^7\right)_{\Lambda_u} \right)$. This partitioning scheme is consistently applied across all multipliers to ensure clarity and coherence in our analysis.

Based on the above setup,  in (\ref{S3}), we examine the matrices \(\Gamma_f^3\), and \(\Gamma_g^3\). Given that these matrices exhibit distinct nonzero columns, we derive the following relations:
\begin{equation*} \left({\rho^{4}}\right)^{\top}\Gamma_f^3=\mathbf{0},\quad\left({\rho^{5}}\right)^{\top}\Gamma_g^3=\mathbf{0},
\end{equation*}
Given that the nonzero blocks in these matrices take the form of identity matrices, we conclude that \({\rho}^{4}=\mathbf{0}\) and \( {\rho}^{5}=\mathbf{0}\).
In (\ref{S4}), the matrix \(\Gamma_c^4\) and  exhibits a nonzero block in column \((\cdot, \Psi_3)\), and similarly, \(\Gamma_h^4\) exclusively contains a nonzero block in column \((\cdot, \Psi_2)\). This setup leads to the following relations:
\begin{equation*}
    \left(\rho^3\right)^{\top} I_{(\Psi_3, \Psi_3)} = \mathbf{0}, \quad \left(\rho^6\right)^{\top} I_{(\Psi_2, \Psi_2)} = \mathbf{0},
\end{equation*}
Consequently, it holds that $\rho^{3}=\mathbf{0}$ and $\ \rho^{6}=\mathbf{0}$. Using a similar technique for equation (\ref{S6}), considering the columns $(\ \cdot\ , \Lambda_u)$ and $(\ \cdot\ , \Lambda_2)$, we derive the following equations:
\begin{equation*}
    {\rho}^{7}_{\Lambda_u}=\mathbf{0}, \quad {\rho}^{14}_{\Lambda_2}=\mathbf{0}.
\end{equation*}

Similarly, for (\ref{S2}), (\ref{S7}) and (\ref{S8}), we have ${\rho}^{15}= \mathbf{0}$ and ${\rho}^{16}= \mathbf{0}$.

In conclusion, we establish the following result:
\begin{equation*}
    {\rho}^{2}=\mathbf{0}, {\rho}^{3}=\mathbf{0},{\rho}^{4}=\mathbf{0},{\rho}^{5}=\mathbf{0},{\rho}^{6}=\mathbf{0}, {\rho}^{7}_{\Lambda_u}=\mathbf{0}, {\rho}^{14}_{\Lambda_2}=\mathbf{0}, {\rho}^{15}=\mathbf{0}, {\rho}^{16}=\mathbf{0}.
\end{equation*}

Thus, the proof that the matrix \(\Gamma\) exhibits positive linear independence is reduced to considering the positive linear independence of the following matrix:

\begin{equation*}
\Tilde{\Gamma}=\left[
\small
\begin{array}{ccc}
\centering{\Gamma_{a}^{1}}&\mathbf{0}_{(I_{g_{c}},\ L_{5})}&\mathbf{0}_{(I_{g_{c}},\ \Lambda_3\cup\Lambda_1)}\\
\mathbf{0}_{(\Lambda_{3},\ L_{1})} & (B B^{\top})_{(\Lambda_{3},\ \cdot \ )} &\Tilde{\Gamma}^{6}_{{i}_{(\cdot,\ \Lambda_3\cup\Lambda_1)}} \\
\mathbf{0}_{(I_{GH_{3}},\ L_{1})} & (B B^{\top})_{(\Lambda_{1},\ \cdot \ )} & \Gamma^{6}_{{j}_{(\cdot,\ \Lambda_3\cup\Lambda_1)}}  \\
\mathbf{0}_{(I_{GH_{3}},\ L_{1})}    & \Gamma_{k}^{5}&  \mathbf{0}_{(I_{GH_{3}},\ \Lambda_3\cup\Lambda_1)} \\
\mathbf{0}_{(I_{H_{3}},\ L_{1})}   & \centering{\Gamma_{l}^{5}} & \mathbf{0}_{(I_{H_{3}},\ \Lambda_3\cup\Lambda_1)}\\
\mathbf{1}_{(I_{G_{4}},\ L_{1})} & \centering{\Gamma_{m}^{5}} & \mathbf{0}_{(I_{G_{4}},\ \Lambda_3\cup\Lambda_1) } \\
\mathbf{1}_{(I_{GH_{4}},\ L_{1})}   & \centering{\Gamma_{n}^{5}} & \mathbf{0}_{(I_{GH_{4}},\ \Lambda_3\cup\Lambda_1) }\\
\mathbf{0}_{(I_{GH_{4}},\ L_{1})}  & \mathbf{0}_{(I_{GH_{4}},\ L_{5})}& \Gamma^{6}_{{o}_{(\cdot,\ \Lambda_3\cup\Lambda_1)}}\\
\mathbf{0}_{(\Lambda_{1} \cup\Lambda^{+}_{3},\ L_{1})}  & \mathbf{0}_{(\Lambda_{1} \cup\Lambda^{+}_{3},\ L_{5})}& \Tilde{\Gamma}^{6}_{{p}_{(\cdot,\ \Lambda_3\cup\Lambda_1)}}\\

\end{array}
\right],
\end{equation*}
with 
\begin{equation*}
\begin{array}{ll}
{\Tilde{\Gamma}}_{i}^{6}:=\left[\begin{array}{cc}\mathbf{0}_{(\Lambda_{3},\ \Lambda_{1} )}& I_{(\Lambda_{3},\ \Lambda_{3} )} \end{array} \right],&
{\Tilde{\Gamma}}_{p}^{6}:=\left[\begin{array}{cc} I_{(\Lambda_{1} \cup\Lambda^{+}_{3},\ \Lambda_{1} \cup  \Lambda^{+}_{3})} & \mathbf{0}_{(\Lambda_{1} \cup\Lambda^{+}_{3},\  \Lambda^{c}_{3} ) }  \end{array} \right].
\end{array}
\end{equation*}


We have shown the multipliers of $S_3$, $S_4$, $S_7$ and $S_8$ to be zeros. The remaining equations are $S_1$, $S_5$ and $S_6$, where $S_5$ and $S_6$ are reduced as $\Tilde{S}_5$ and $\Tilde{S}_6$. We rewrite them as follows.

\begin{align}    S_1&=\left({{\rho}^{1}}\right)^{\top}\Gamma_a^1+\left({{\rho}^{11}}\right)^{\top}\mathbf{1}_{\left(I_{G_4},L_1\right)}+\left({{\rho}^{12}}\right)^{\top}\mathbf{1}_{\left(I_{{GH}_4},L_1\right)}=\mathbf{0},\label{S1_1} \\ 
\Tilde{S}_5&=\left({{\rho}^{7}}\right)_{\Lambda_3}^{\top}\left(BB^{\top}\right)_{\left(\Lambda_3,\cdot\right)}\nonumber      +\left({{\rho}^{8}}\right)^{\top}\left(BB^{\top}\right)_{\left(\Lambda_1,\cdot\right)}   \\ &+\left({{\rho}^{9}}\right)^{\top}\Gamma_k^5+\left({{\rho}^{10}}\right)^{\top}\Gamma_l^5+\left({{\rho}^{11}}\right)^{\top}\Gamma_m^5+\left({{\rho}^{12}}\right)^{\top}\Gamma_n^5 =\mathbf{0} ,\label{S2_1}\\ 
\Tilde{S}_6&=\left({{\rho}^{7}}\right)_{\Lambda_3}^{\top}{\Tilde{\Gamma}}_i^6+\left({{\rho}^{8}}\right)^{\top}\Gamma_j^6+\left({{\rho}^{13}}\right)^{\top}\Gamma_o^6+\left({{\rho}^{14}}\right)_{\Lambda_1\cup\Lambda_3^+}^{\top}{\Tilde{\Gamma}}_p^6=\mathbf{0}.\label{S3_1}
\end{align}

In the context of the columnar structure of equation (\ref{S3_1}), the matrix $\Tilde{S}_6$ is partitioned as follows: $\Tilde{S}_6 = \left[\left(\Tilde{S}_6\right)_{(\cdot,\Lambda_3^+)},\ \left(\Tilde{S}_6\right)_{(\cdot,\Lambda_3^c)},\ \left(\Tilde{S}_6\right)_{(\cdot,\Lambda_1)}\right]$. Consequently, we establish:

\begin{align}
&\left(\Tilde{S}_6\right)_{(\cdot,\Lambda_3^+)}=  \left( {\rho}^{7}\right)_{\Lambda_3^+}^{\top} + \left( {\rho}^{14}\right)_{\Lambda_3^+}^{\top} = \mathbf{0},\label{S3_1_1}\\
    &\left(\Tilde{S}_6\right)_{(\cdot,\Lambda_3^c)}= \left( {\rho}^{7}\right)_{\Lambda_3^c}^{\top} + \left({\rho}^{13}\right)^{\top} = \mathbf{0},\label{S3_1_2}\\
    &\left(\Tilde{S}_6\right)_{(\cdot,\Lambda_1)}=  \left({\rho}^{8}\right)^{\top} + \left( {\rho}^{14}\right)_{\Lambda_1}^{\top} = \mathbf{0}.\label{S3_1_3}
\end{align}

Due to the nonnegativity of ${\rho}^{13}$, it follows that
\begin{equation}
    \left( {\rho}^{7}\right)_{\Lambda_3^c}\leq\mathbf{0}.\label{nonnegativity2}
\end{equation}

Similarly, for the matrix $\Tilde{S}_5$, we establish the following partitioning: $$\Tilde{S}_5 = \left[\left(\Tilde{S}_5\right)_{(\cdot,\Lambda_3^+)},\ \left(\Tilde{S}_5\right)_{(\cdot,\Lambda_3^c)},\ \left(\Tilde{S}_5\right)_{(\cdot,\Lambda_1)},\ \left(\Tilde{S}_5\right)_{(\cdot,\Lambda_2)},\ \left(\Tilde{S}_5\right)_{(\cdot,\Lambda_u)}\right].$$ This configuration leads us to the following derivations:

\begin{align}
    &\left(\Tilde{S}_5\right)_{(\cdot,\Lambda_3^+)}= \left( {\rho}^{7}\right)_{\Lambda_3}^{\top}\left(B B^{\top}\right)_{(\Lambda_3, \Lambda_3^+)} + \left( {\rho}^{8}\right)^{\top}\left(B B^{\top}\right)_{(\Lambda_1, \Lambda_3^+)} = \mathbf{0},\label{S_2_1_1}\\
    &\left(\Tilde{S}_5\right)_{(\cdot,\Lambda_3^c)}= \left( {\rho}^{7}\right)_{\Lambda_3}^{\top}\left(B B^{\top}\right)_{(\Lambda_3, \Lambda_3^c)} + \left( {\rho}^{8}\right)^{\top}\left(B B^{\top}\right)_{(\Lambda_1, \Lambda_3^c)} - \left( {\rho}^{12} \right)^{\top} = \mathbf{0},\label{S_2_1_2}\\
    &\left(\Tilde{S}_5\right)_{(\cdot,\Lambda_1)}=  \left( {\rho}^{7}\right)_{\Lambda_3}^{\top}\left(B B^{\top}\right)_{(\Lambda_3, \Lambda_1)} + \left( {\rho}^{8}\right)^{\top}\left(B B^{\top}\right)_{(\Lambda_1, \Lambda_1)} + \left( {\rho}^{9} \right)^{\top} = \mathbf{0},\label{S_2_1_3}\\
    &\left(\Tilde{S}_5\right)_{(\cdot,\Lambda_2)}=  \left( {\rho}^{7}\right)_{\Lambda_3}^{\top}\left(B B^{\top}\right)_{(\Lambda_3, \Lambda_2)} + \left( {\rho}^{8}\right)^{\top}\left(B B^{\top}\right)_{(\Lambda_1, \Lambda_2)} + \left( {\rho}^{10} \right)^{\top} = \mathbf{0},\label{S_2_1_4}\\
    &\left(\Tilde{S}_5\right)_{(\cdot,\Lambda_u)}=  \left( {\rho}^{7}\right)_{\Lambda_3}^{\top}\left(B B^{\top}\right)_{(\Lambda_3, \Lambda_u)} + \left( {\rho}^{8}\right)^{\top}\left(B B^{\top}\right)_{(\Lambda_1, \Lambda_u)} - \left( {\rho}^{11} \right)^{\top} = \mathbf{0}\label{S_2_1_5}.
\end{align}

Now, let us consider the column corresponding to $\left[\left(\Tilde{S}_5\right)_{(\cdot,\Lambda_3^+)},\ \left(\Tilde{S}_5\right)_{(\cdot,\Lambda_3^c)},\ \left(\Tilde{S}_5\right)_{(\cdot,\Lambda_1)}\right]$. From (\ref{S_2_1_1})-(\ref{S_2_1_3}), we have the following results:
\begin{align}
    \left[\mathbf{0}_{\mid\Lambda_3^+\mid},\mathbf{0}_{\mid\Lambda_3^c\mid}, \mathbf{0}_{\mid\Lambda_1\mid}\right]&= \left[\left({\rho}^{7}\right)_{\Lambda_3}^{\top}, \left({\rho}^{8}\right)^{\top}\right] \left[\begin{array}{ccc}
         \left(B B^{\top}\right)_{(\Lambda_3, \Lambda_3^+)}& \left(B B^{\top}\right)_{(\Lambda_3, \Lambda_3^c)}& \left(B B^{\top}\right)_{(\Lambda_3, \Lambda_1)} \\
         \left(B B^{\top}\right)_{(\Lambda_1, \Lambda_3^+)}& 
         \left(B B^{\top}\right)_{(\Lambda_1, \Lambda_3^c)}&
         \left(B B^{\top}\right)_{(\Lambda_1, \Lambda_1)}
    \end{array}\right]\nonumber\\
    & \quad \quad+\left[ \mathbf{0}_{\mid\Lambda_3^+\mid}, - \left( {\rho}^{12} \right)^{\top}, \left( {\rho}^{9} \right)^{\top}\right],\\
    & = \left[\left({\rho}^{7}\right)_{\Lambda_3^+}^{\top}, \left({\rho}^{7}\right)_{\Lambda_3^c}^{\top}, \left({\rho}^{8}\right)^{\top}\right] \left[\begin{array}{ccc}
         \left(B B^{\top}\right)_{(\Lambda_3^+, \Lambda_3^+)}& \left(B B^{\top}\right)_{(\Lambda_3^+, \Lambda_3^c)}& \left(B B^{\top}\right)_{(\Lambda_3^+, \Lambda_1)} \\
         \left(B B^{\top}\right)_{(\Lambda_3^c, \Lambda_3^+)}& \left(B B^{\top}\right)_{(\Lambda_3^c, \Lambda_3^c)}& \left(B B^{\top}\right)_{(\Lambda_3^c, \Lambda_1)} \\
         \left(B B^{\top}\right)_{(\Lambda_1, \Lambda_3^+)}& 
         \left(B B^{\top}\right)_{(\Lambda_1, \Lambda_3^c)}&
         \left(B B^{\top}\right)_{(\Lambda_1, \Lambda_1)}
    \end{array}\right]\nonumber\\
    & \quad \quad+\left[ \mathbf{0}_{\mid\Lambda_3^+\mid}, - \left( {\rho}^{12} \right)^{\top}, \left( {\rho}^{9} \right)^{\top}\right].\label{Lambda1_3}
\end{align}
The equality in (\ref{Lambda1_3}) holds due to the decomposition $\Lambda_3=\left(\Lambda_3^+,\Lambda_3^c \right)$.
Let ${\overline{\rho}}^{\top}$ denote $\left[\left({\rho}^{7}\right)^{\top}_{\Lambda_3^+},\left({\rho}^{7}\right)^{\top}_{\Lambda_3^c},\left({\rho}^{8}\right)^{\top}\right]^{\top}$. 
Upon right-multiplying equation (\ref{Lambda1_3}) by  ${\overline{\rho}}^{\top}$ , we obtain
\begin{align}
    0 &= \left[\left({\rho}^{7}\right)^{\top}_{\Lambda_3^+},\left({\rho}^{7}\right)^{\top}_{\Lambda_3^c},\left({\rho}^{8}\right)^{\top}\right] \left( BB^{\top}\right)_{(\Lambda_1\cup\Lambda_3,\Lambda_1\cup\Lambda_3)} \left[\begin{array}{c}
         {\left({\rho}^{7}\right)}_{\Lambda_3^+}  \\
          {\left({\rho}^{7}\right)}_{\Lambda_3^c}\\
          {\rho}^{8}
    \end{array}\right]+\left[ \mathbf{0}_{\mid\Lambda_3^+\mid}, - \left( {\rho}^{12} \right)^{\top}, \left( {\rho}^{9} \right)^{\top}\right] \left[\begin{array}{c}
         {\left({\rho}^{7}\right)}_{\Lambda_3^+}  \\
          {\left({\rho}^{7}\right)}_{\Lambda_3^c}\\
          {\rho}^{8}
    \end{array}\right]\\
    &= \overline{\rho}^{\top} \left( BB^{\top}\right)_{(\Lambda_1\cup\Lambda_3,\Lambda_1\cup\Lambda_3)} \overline{\rho}- \left( {\rho}^{12} \right)^{\top} {\left({\rho}^{7}\right)}_{\Lambda_3^c}+ \left( {\rho}^{9} \right)^{\top}{\rho}^{8}\geq 0.\label{positiveterm}
\end{align}
The first term is non-negative due to positive-definiteness of $\left( BB^{\top}\right)_{(\Lambda_1\cup\Lambda_3,\Lambda_1\cup\Lambda_3)}$. The second and third term is non-negative because of non-negativity of $\rho^8$, $\rho^9$ and $\rho^{12}$ in (\ref{nonnegativity}) and non-positivity of  $\left({\rho}^{7}\right)_{\Lambda_3^c}$ in (\ref{nonnegativity2}). Altogether it follows that (\ref{positiveterm}) is non-negative  and the equality holds if and only if $\overline{\rho}=\mathbf{0}$ is valid, and thus we have
\begin{equation}\label{multiplier56}
    {\rho}^{7}=\mathbf{0}, {\rho}^{8}=\mathbf{0}.
\end{equation}
Upon inserting (\ref{multiplier56}) into (\ref{S3_1_1})-(\ref{S3_1_3}) and (\ref{S_2_1_2})-(\ref{S_2_1_5}), we get the remaining multipliers must be zeros. Consequently, $\Gamma$ is positive-linearly independent, thereby validating the MPEC-MFCQ condition.

\end{proof}

\section{The proof of Theorem \ref{thrm5}}\label{appendixb}
Before proceeding with the proof, we first introduce the required notations.
    Let $v$ be a feasible point for the MPEC in (\ref{MPEC}), we define the index sets as:
    \begin{align*}
        &J_{00}(v)=\{ i\in\mathbb{R}^{\overline{n}+\overline{m}}\mid G_i(v)=0,\ H_i(v)=0\},\\
        &J_{0+}(v)=\{ i\in\mathbb{R}^{\overline{n}+\overline{m}}\mid G_i(v)=0,\ H_i(v)>0\},\\
        &J_{+0}(v)=\{ i\in\mathbb{R}^{\overline{n}+\overline{m}}\mid G_i(v)>0,\ H_i(v)=0\}.
    \end{align*}
    We denote by $\mathrm{supp}(z):=\{\,i \mid z_i \neq 0\,\}$ the support of a vector $z$.

  With these notations established, we are now prepared to present the proof.
    \begin{proof}
Recall the detailed form of the stopping criteria (\ref{innerap}),
    \begin{align}
        & \Vert \nabla f(v^k)-\sum_{i=1}^{2\overline{n}} \lambda_i^{g,k} \nabla g_i(v^k)-\sum_{i=1}^{\overline{m}} \lambda_i^{G,k} \nabla G_i(v^k)-\sum_{i=1}^{\overline{m}} \lambda_i^{H,k} \nabla H_i(v^k)\\
        &+\sum_{i=1}^{\overline{m}} \lambda_i^{GH,k} H_i(v^k)\nabla G_i(v^k)+\sum_{i=1}^{\overline{m}} \lambda_i^{GH,k} G_i(v^k)\nabla H_i(v^k)\Vert_{\infty}\leq\epsilon_k,\label{30a}\\
        & \min\{\lambda^{g,k},g(v^k)\}\leq\epsilon_k,\label{30b}\\
        &\min\{\lambda^{G,k},G(v^k)\}\leq\epsilon_k,\label{30c}\\
        &\min\{\lambda^{H,k},H(v^k)\}\leq\epsilon_k,\label{30d}\\
        &\min\{\lambda^{GH,k},u\}\leq\epsilon_k,\label{30e}\\
        &\Vert u^{k}_i-\lbrack {\tau}_k- G_i(v^k)H_i(v^k)\rbrack\Vert_{\infty}\leq\epsilon_k,i=1,\cdots,\overline{m},\label{30f}\\
        & \lambda^k\geq0,u^{k}\geq0,g(v^k)\geq0,G(v^k)\geq0,H(v^k)\geq0. \label{30g}
    \end{align}
  Taking the limit as $k \rightarrow \infty$, we assume that the sequence $\{ v^k, \lambda^k, u^k \}$ converges to $(v^*, \lambda^*, u^*)$. Consequently, we have $u_i^{*} = -G_i(v^*)H_i(v^*) \geq 0$ for all $i = 1,\ldots,\overline{m}$, as well as $G(v^*) \geq 0$ and $H(v^*) \geq 0$. It follows directly that
\[
G_i(v^*)H_i(v^*) = 0,\quad i = 1,\cdots,\overline{m},
\]
indicating that the limit $v^*$ is feasible for the MPEC. Next, we define the multipliers as follows.

$$
           {\delta_i}^{G,k} :=
            \begin{cases}
                {\lambda_i}^{GH,k} H_i(v^k), &i\in J_{00}(v^*)\cup J_{0+}(v^*),\\
                 0,&\text{otherwise},
            \end{cases}
            \quad\quad
            {\delta_i}^{H,k}:=
            \begin{cases}
                 {\lambda_i}^{GH,k} G_i(v^k),&i\in J_{00}(v^*)\cup J_{+0}(v^*),\\
                 0,&\text{otherwise}.
            \end{cases}
$$
Then it holds that
\begin{align*}
    &\Vert \nabla f(v^k)-\sum_{i=1}^{2\overline{n}} \lambda_i^{g,k} \nabla g_i(v^k)-\sum_{i=1}^{\overline{m}} \lambda_i^{G,k} \nabla G_i(v^k)-\sum_{i=1}^{\overline{m}} \lambda_i^{H,k} \nabla H_i(v^k)+\sum_{i=1}^{\overline{m}} \delta_i^{G,k}\nabla G_i(v^k)\\
    &+\sum_{i=1}^{\overline{m}} \delta_i^{H,k}\nabla H_i(v^k) +\sum_{i\in I_{+0}(v^*)} \lambda_i^{GH,k} H_i(v^k)\nabla G_i(v^k) \\
    &+\sum_{i\in I_{0+}(v^*)} \lambda_i^{GH,k} G_i(v^k)\nabla H_i(v^k)
    \Vert_{\infty}\leq\epsilon_k.
\end{align*}
We claim that the multipliers $\left(\lambda^{g,k},\lambda^{G,k},\lambda^{H,k},\delta^{G,k},\delta^{H,k},\lambda_{I_{+0}\cup I_{0+}}^{GH,k}\right)$ are bounded. Otherwise, we could assume without loss of generality convergence of the sequence 
\begin{equation*}
    \frac{\left(\lambda^{g,k},\lambda^{G,k},\lambda^{H,k},\delta^{G,k},\delta^{H,k},\lambda_{I_{+0}\cup I_{0+}}^{GH,k}\right)}{\lVert \left(\lambda^{g,k},\lambda^{G,k},\lambda^{H,k},\delta^{G,k},\delta^{H,k},\lambda_{I_{+0}\cup I_{0+}}^{GH,k}\right)\rVert}\rightarrow \left(\bar{\lambda}^{g},\bar{\lambda}^{G},\bar{\lambda}^{H},\bar{\delta}^{G},\bar{\delta}^{H},\bar{\lambda}_{I_{+0}\cup I_{0+}}^{GH}\right)\neq 0.
\end{equation*}
Then the $\epsilon_k$-stationary of the $v^k$ yields
\begin{equation*}
    -\sum_{i=1}^{2\overline{n}}\bar{\lambda} _i^{g} \nabla g_i(v^*)-\sum_{i=1}^{\overline{m}}\bar{\lambda} _i^{G} \nabla G_i(v^*)-\sum_{i=1}^{\overline{m}}\bar{\lambda} _i^{H} \nabla H_i(v^*)+\sum_{i=1}^{\overline{m}}\bar{\delta} _i^{G}\nabla G_i(v^*)+\sum_{i=1}^{\overline{m}}\bar{\delta} _i^{H}\nabla H_i(v^*)=0,
\end{equation*}
where we take into account that $H_i(v^k)\rightarrow 0,\ i\in J_{+0}(v^*)$ and $G_i(v^k)\rightarrow 0,\ i\in J_{0+}(v^*)$. We know that
$\bar{\lambda}_i^g\geq0$ for $i=1,\cdots,2\overline{n}$. If $\bar{\lambda}_i^g>0$, we have $\lambda_i^{g,k}>c$ for some constant $c>0$ for all k sufficiently large. Without loss of generality, we assume $\epsilon_k<c$ for all k sufficiently large. This yields
$min\left\{\lambda_i^{g,k},g_i(v^k)\right\}=g_i(v^k)\rightarrow0\ (k\rightarrow\infty)$, and thus $i\in I_g(v^*)$. Similarly, we have $\bar{\lambda}_i^G\geq0$  for $i=1,\cdots,\overline{m}$ and $\bar{\lambda}_i^G>0$ implies $\Vert G(v^k)\Vert_{\infty}\rightarrow0\ (k\rightarrow\infty)$ and thus $i\in J_{00}(v^*)\cup J_{0+}(v^*)$; also we have $\bar{\lambda}_i^H\geq0$  for $i=1,\cdots,\overline{m}$ and $\bar{\lambda}_i^H>0$ implies $\Vert H(v^k)\Vert_{\infty}\rightarrow0 \ (k\rightarrow\infty)$ and thus $i\in J_{00}(v^*)\cup J_{+0}(v^*)$.\\
If $\left(\bar{\lambda}^{g},\bar{\lambda}^{G},\bar{\lambda}^{H},\bar{\delta}^{G},\bar{\delta}^{H}\right)\neq0$, it yields a contradiction to the definition of MPEC-MFCQ. If on the other hand, $\left(\bar{\lambda}^{g},\bar{\lambda}^{G},\bar{\lambda}^{H},\bar{\delta}^{G},\bar{\delta}^{H}\right)=0$, there has to be an index $i\in J_{+0}(v^*)\cup J_{0+}(v^*)$ with $\bar{\lambda}_i^{GH}\neq0$. First consider the case $i\in J_{0+}(v^*)$, as we assumed the multipliers to be unbounded. Then by definition,
\begin{equation*}
    \bar{\lambda}_i^G=\lim_{k\rightarrow\infty}=\frac{\lambda_i^{GH,k}H_i(v^k)}{\Vert \left(\lambda^{g,k},\lambda^{G,k},\lambda^{H,k},\delta^{G,k},\delta^{H,k},\lambda_{J_{+0}\cup J_{0+}}^{GH,k}\right)\Vert}=\bar{\lambda}_i^{GH} H_i(v^*)\neq 0,
\end{equation*}
a contradiction to the assumption $\bar{\lambda}_i^G=0$. The case $i\in J_{+0}(v^*)$ could be treated in a similar way. Therefore, the sequence $\left(\lambda^{g,k},\lambda^{G,k},\lambda^{H,k},\delta^{G,k},\delta^{H,k},\lambda_{J_{+0}\cup J_{0+}}^{GH,k}\right)$ is bounded and converges to some limit $\left(\lambda^{g,*},\lambda^{G,*},\lambda^{H,*},\delta^{G,*},\delta^{H,*},\lambda_{J_{+0}\cup J_{0+}}^{GH,*}\right)$. It is easy to see that
\begin{align*}
    & supp\left(\lambda^{g,*}\right)\subseteq I_g\left(v^*\right),\\
    & supp\left(\lambda^{G,*}\right)\cup supp\left(\delta^{G,*}\right)\subseteq J_{00}\left(v^*\right)\cup J_{0+}\left(v^*\right),\\
    & supp\left(\lambda^{H,*}\right)\cup supp\left(\delta^{H,*}\right)\subseteq J_{00}\left(v^*\right)\cup J_{+0}\left(v^*\right).
\end{align*}
Let us define the multipliers $\gamma^*:=\lambda^{G,*}-\delta^{G,*}$ and $\mu^*:=\lambda^{H,*}-\delta^{H,*}$. Then $v^*$ together with the multipliers $\left(\lambda^{g,*},\gamma^*,\mu^*\right)$ is a weakly stationary point of the MPEC.



In order to establish the C-stationarity of $v^*$, suppose for contradiction that there exists an index $i \in J_{00}(v^*)$ such that $\gamma^*\mu^* < 0$. Without loss of generality, we consider only the case $\gamma^* < 0,\ \mu^* > 0$, as the alternative scenario can be treated analogously. Given $\gamma^* < 0$, it follows directly that $\delta_i^{G,*} > 0$. Since $i \in J_{00}(v^*)$ and $\delta_i^{G,*} = \lim_{k\rightarrow\infty}\delta_i^{G,k} = \lim_{k\rightarrow\infty}\lambda_i^{GH,k}H_i(v^k)$, we must have $\lambda_i^{GH,k} \rightarrow +\infty$ as $k \rightarrow \infty$.

Considering further the facts $u_i^k < \epsilon_k$ and (\ref{30f}), we derive the inequality
\begin{equation}\label{39}
    0<\frac{\mid G_i(v^k)H_i(v^k)-{\tau}_k\mid}{\sqrt{\epsilon_k}}\leq\frac{2\epsilon_k}{\sqrt{\epsilon_k}}\leq 2\sqrt{\epsilon_k}\rightarrow0\ (k\rightarrow\infty).
\end{equation}

Now, if $\lambda_i^{H,*} > 0$, then, using the inequality $\|H_i(v^k)\|_{\infty}\leq \epsilon_k$, we obtain
\begin{equation*}
    0\leq\frac{\mid H_i(v^k)\mid}{\sqrt{\epsilon_k}}\leq \sqrt{\epsilon_k}\ \rightarrow0 \ (k\rightarrow\infty).
\end{equation*}

Combining this result with (\ref{39}) implies that
\[
\frac{{\tau}_k}{\sqrt{\epsilon_k}}\rightarrow 0,\quad \text{as}\ k\rightarrow\infty,
\]
contradicting the condition $\epsilon_k = O({\tau}_k^2)$. Consequently, we must have $\lambda_i^{H,*}=0$, which implies $\delta_i^{H,*} < 0$, contradicting the definition of $\delta_i^{H,*}$. Thus, we conclude that $v^*$, together with the multipliers $(\lambda^{g,*}, \gamma^*, \mu^*)$, is a C-stationary point of the MPEC.

\end{proof}

\end{appendices}
\end{document}